\definecolor{refkey}{gray}{0.5}
\definecolor{labelkey}{gray}{0.2}
\newtheorem{theorem}{Theorem}[section]
\newtheorem{proposition}[theorem]{Proposition}
\newtheorem{lemma}[theorem]{Lemma}
\newtheorem{corollary}[theorem]{Corollary}
\theoremstyle{definition}
\theoremstyle{remark}
\newtheorem{remark}[theorem]{Remark}
\newcommand{\N}{\mathbb{N}}
\newcommand{\R}{\mathbb{R}}
\newcommand{\Sph}{\mathbb{S}}
\newcommand{\bb}{\mathbf{b}}
\newcommand{\fm}{\mathfrak{m}}
\newcommand{\fn}{\mathfrak{n}}
\newcommand{\fq}{\mathfrak{q}}
\newcommand{\dist}{\mathrm{d}}
\newcommand{\ve}{\varepsilon}
\newcommand{\lra}{\longrightarrow}
\newcommand{\e}{\mathrm{e}}
\newcommand{\CD}{\mathrm{CD}}
\newcommand{\RCD}{\mathrm{RCD}}
\newcommand{\Ric}{\mathrm{Ric}}
\newcommand{\supp}{\mathrm{supp}}
\newcommand*\diff{\mathop{}\!\mathrm{d}}
\definecolor{purple}{rgb}{0.62, 0.0, 0.77}
\newcommand{\veb}[1]{\textcolor{blue}{\textbf{VEB:}  #1}}
\newcommand{\js}[1]{\textcolor{purple}{\textbf{JS:}  #1}}
\begin{document}

\begin{frontmatter}

\title{A generalization of Gr\"{u}nbaum's inequality in RCD$(0,N)$-spaces}

\author{Victor-Emmanuel Brunel\thanks{CREST ENSAE IP Paris, 5 Av. Le Chetelier 91120 Palaiseau, France ({\sf victor.emmanuel.brunel@ensae.fr})}
\and
Shin-ichi Ohta\thanks{
Department of Mathematics, Osaka University,
Osaka 560-0043, Japan \&
RIKEN Center for Advanced Intelligence Project (AIP),
1-4-1 Nihonbashi, Tokyo 103-0027, Japan
({\sf s.ohta@math.sci.osaka-u.ac.jp})}
\and
Jordan Serres\thanks{CREST ENSAE IP Paris, 5 Av. Le Chetelier 91120 Palaiseau, France
({\sf jordan.serres@ensae.fr})}}

\setattribute{abstractname}{skip} {{\bf Abstract.} } 
\begin{abstract}
We generalize Gr\"{u}nbaum's classical inequality in convex geometry to curved spaces with nonnegative Ricci curvature,
precisely, to $\RCD(0,N)$-spaces with $N \in (1,\infty)$ as well as weighted Riemannian manifolds of $\Ric_N \ge 0$ for $N \in (-\infty,-1) \cup \{\infty\}$.
Our formulation makes use of the isometric splitting theorem; given a convex set $\Omega$ and the Busemann function associated with any straight line, the volume of the intersection of $\Omega$ and any sublevel set of the Busemann function that contains a barycenter of $\Omega$ is bounded from below in terms of $N$. We also extend this inequality beyond uniform distributions on convex sets. Moreover, we establish some rigidity results by using the localization method, and the stability problem is also studied.
%
\end{abstract}
\end{frontmatter}


\section{Introduction}

\subsection{Background}

In the Euclidean space $\R^n$, it is a well known fact that, given any probability measure, there exists a point such that any closed halfspace including this point has mass at least $1/(n+1)$ \cite[Theorem 1]{Gr}, \cite[Lemma 6.3]{donoho1992breakdown}.
This bound is tight since there are probability measures (e.g., the uniform distribution on the vertex set of a simplex) such that every point is in a closed halfspace with mass at most $1/(n+1)$.
Hence, this result becomes very little informative when the dimension $n$ is very large because it does not allow to discriminate between points in general.
Nonetheless, the bound can be improved, provided that the measure satisfies some geometric properties.
For instance, for the uniform distribution on a bounded convex set $\Omega \subset \R^n$, Gr\"unbaum's inequality \cite[Theorem 2]{Gr} states that any closed halfspace including the barycenter (centroid) of $\Omega$ must have volume at least $(n/(n+1))^n$.
Gr\"unbaum's inequality remains informative in any dimension since $(n/(n+1))^n \ge \e^{-1}$,
and it has been extended to log-concave distributions (among which uniform distributions on convex sets) \cite[Lemma 5.12]{lovasz2007geometry}.
Namely, for any log-concave probability measure on $\R^n$, any closed halfspace containing the barycenter must have mass at least $\e^{-1}$, independently of the dimension $n$.
By virtue of the Pr\'ekopa--Leindler inequality, it is in fact sufficient to consider the one-dimensional case in order to prove the inequality for log-concave measures \cite{lovasz2007geometry}.
Furthermore, as we will see in this work, a family of functional inequalities that generalize the Pr\'ekopa--Leindler inequality, namely the Borell--Brascamp--Lieb inequalities, allow to generalize the bound even further, to broader classes of distributions. 

The problem described here is directly related to a notion that is central in descriptive statistics, called Tukey's depth.
Given a probability distribution $\mu$ on $\R^n$, Tukey's depth of a point $x\in \R^n$ relative to $\mu$ is defined as $D_\mu(x):=\inf_{H}\mu(H)$, where the infimum is taken over all closed halfspaces $H \subset \R^n$ containing $x$.
Then, the inequalities discussed above can be summarized as follows: 
\begin{itemize}
    \item For any distribution $\mu$ on $\R^n$, there exists a point $x\in\R^n$ with $D_\mu(x)\geq 1/(n+1)$; moreover, there exists $\mu$ for which $\sup_{x\in\R^n}D_\mu(x)=1/(n+1)$.
    \item If $\mu$ is the uniform distribution on a convex set $\Omega$ in $\R^n$, then there exists $x\in\R^n$ with $D_\mu(x)\geq (n/(n+1))^n$ and $x$ can be chosen to be the barycenter of $\Omega$.
    \item If $\mu$ is a log-concave distribution on $\R^n$, then there exists $x\in\R^n$ with $D_\mu(x)\geq \e^{-1}$ and one can choose $x$ to be the barycenter of $\mu$. 
\end{itemize}
The function $D_\mu$ is called a depth function \cite{zuo2000general} because it provides a measure of centrality relative to $\mu$.
Roughly speaking, $D_\mu(x)$ is the amount of mass that can be separated from $x$ by a hyperplane.
Therefore, the aforementioned results indicate that under a shape constraint on the measure $\mu$, even in large dimensions, there exist deep points.
Deep points are relevant in various applications:
In statistics, a deepest point (called Tukey median) provides a notion of center of a distribution that is robust to perturbations of that distribution \cite[Section 3.2.7]{SurveyDepth}, which is important when dealing with data from that distribution, that may have been corrupted. In numerical optimization, existence of deep points is essential for cutting plane methods \cite{bubeck2015convex} while Gr\"unbaum's inequality has also found applications in further convex optimization methods \cite{BV}. On the computational side, finding deep points is relevant in algorithmic geometry \cite{de2019discrete}.

In a non-Euclidean setup, a negative result was proved in \cite{rusciano2018riemannian}, showing that the above inequalities cannot be extended to nonpositively curved spaces in general.
Precisely, \cite[Theorem 2]{rusciano2018riemannian} states that given any Hadamard manifold $M$, for any probability measure $\mu$ on $M$ that is absolutely continuous with respect to the Riemannian volume measure (note that this additional restriction is only technical), there exists $x^* \in M$ such that any closed halfspace $H$ containing $x^*$ must satisfy $\mu(H) \ge 1/(n+1)$, where $n=\dim M$.
Moreover, there are cases where $\mu$ is the uniform distribution on a convex set and the bound is tight.
In this context, a closed halfspace is a subset of $M$ of the form $\{y \in M \mid \langle v,\dot{\gamma}_{xy}(0) \rangle \ge 0 \}$, for some $x\in M$ and $v \in T_x M\setminus\{0\}$, where $\gamma_{xy}$ denotes the (unique) minimal geodesic from $x$ to $y$.

In this article, we show that under a right framework, the above inequalities can be extended to non-Euclidean setups.
We work on metric measure spaces whose generalized Ricci curvature, in a synthetic sense, is nonnegative.
We appeal to Cheeger--Gromoll-type splitting theorems, which allow, as in the Euclidean case, to reduce the computations to a one-dimensional analysis. 

\subsection{Notations and definitions}

We briefly recall some concepts necessary to explain our results.

\subsubsection{Metric geometry.}

Let $(X,\dist_X)$ be a metric space.
Given $x,y\in X$, a (\emph{minimal}) \emph{geodesic} from $x$ to $y$ means a path $\gamma\colon [0,1]\lra X$ such that $\gamma(0)=x$, $\gamma(1)=y$, and $\dist_X(\gamma(s),\gamma(t))=|s-t|\dist_X(x,y)$ for all $s,t\in [0,1]$.
We call $(X,\dist_X)$ a \emph{geodesic space} if any pair $x,y\in X$ can be connected by a geodesic.
A subset $\Omega \subset X$ is said to be (\emph{geodesically}) \emph{convex} if, for any $x,y\in \Omega$, any geodesic between them is included in $\Omega$.
We say that a function $f\colon X\lra \R \cup \{\infty\}$ is \emph{convex} if it is convex along all geodesics, i.e., $f(\gamma(t))\leq (1-t)f(x)+tf(y)$ for all $x,y\in X$, all geodesics $\gamma\colon [0,1] \lra X$ from $x$ to $y$ and all $t\in [0,1]$.
In particular, $\supp(f):=f^{-1}(\R)$ is a convex set.
We say that $f$ is \emph{concave} if $-f$ is convex.

A \emph{straight line} is a map $\gamma\colon \R\lra X$ that satisfies $\dist_X(\gamma(s),\gamma(t))=|s-t|$ for all $s,t \in \R$.
Then, the \emph{Busemann function} associated with $\gamma$ is defined as
\begin{equation}\label{eq:Buse}
\bb_\gamma(x) :=\lim_{t\to\infty} \bigl\{ t-\dist_X \bigl( x,\gamma(t) \bigr) \bigr\}, \quad x \in X.
\end{equation}
The function $\bb_{\gamma}$ is $1$-Lipschitz and can be interpreted as a projection onto $\gamma$.
For instance, in the Euclidean case,  we have $\bb_\gamma(x)=\langle v,x-x_0 \rangle$, where $\gamma(t)=x_0+tv$ for some point $x_0$ and unit vector $v$.

We denote by $\mathcal{P}(X)$ the set of Borel probability measures on $X$ and, for $p \in [1,\infty)$, by $\mathcal{P}^p(X)$ the subset consisting of probability measures with finite $p$-th moment, i.e., those for which the function $\dist_X^p(\cdot,x_0)$ is integrable for some (and hence, all) $x_0\in X$.
For $\mu\in \mathcal P^2(X)$, a point $x_0 \in X$ attaining
\[
\inf_{z \in X} \int_X \dist_X^2(z,x) \,\mu(\diff x)
\]
is called a \emph{barycenter} of $\mu$.
More generally, even when $\mu$ only has finite first moment, we can define its barycenter as a point achieving
\[
\inf_{z \in X} \int_X \bigl\{ \dist_X^2(z,x) -\dist_X^2(z_0,x) \bigr\} \,\mu(\diff x),
\]
where $z_0 \in X$ is an arbitrarily fixed point.
In Euclidean spaces, we have the unique barycenter $\int_{\R^n} x \,\mu(\diff x)$.

\subsubsection{Curvature-dimension conditions.}

The curvature-dimension condition for metric measure spaces is a synthetic geometric notion of lower Ricci curvature bound described with the help of optimal transport theory.
For brevity, we consider only the case of nonnegative curvature.

A metric measure space $(X,\dist_X,\fm)$ will mean a complete separable metric space $(X,\dist_X)$ equipped with a Borel measure $\fm$ with $\fm(U) \in (0,\infty)$ for each nonempty bounded open set $U \subset X$.

Given $\nu_0,\nu_1 \in \mathcal{P}^2(X)$, the \emph{$L^2$-Kantorovich--Wasserstein distance} is defined by
\[
W_2(\nu_0,\nu_1) :=\inf_{\pi} \biggl( \int_{X \times X} \dist_X^2(x,y) \,\pi(\diff x \diff y) \biggr)^{1/2},
\]
where $\pi$ runs over all couplings of $\nu_0$ and $\nu_1$
(i.e., $\pi \in \mathcal{P}(X \times X)$ with marginals $\nu_0$ and $\nu_1$).
A geodesic $(\nu_{\lambda})_{\lambda \in [0,1]}$ with respect to $W_2$ is regarded as an \emph{optimal transport} from $\nu_0$ to $\nu_1$.

For $\nu =\zeta \fm \in \mathcal{P}(X)$ absolutely continuous with respect to $\fm$, we define the \emph{relative entropy}
\[
S_{\infty}(\nu) :=\int_X \zeta \log\zeta \diff\fm
\]
($S_\infty(\nu):=\infty$ if $\int_{\{\zeta>1\}} \zeta\log\zeta \diff\fm =\infty$),
and the \emph{R\'enyi entropy}
\[
S_N(\nu) :=\begin{cases}
-\int_X \zeta^{(N-1)/N} \diff\fm & N \in (1,\infty), \\
\int_X \zeta^{(N-1)/N} \diff\fm & N \in (-\infty,0).
\end{cases}
\]

We say that a metric measure space $(X,\dist_X,\fm)$ satisfies the \emph{curvature-dimension condition} $\CD(0,N)$ (or $(X,\dist_X,\fm)$ is a \emph{$\CD(0,N)$-space}) if the corresponding entropy $S_N$ is convex with respect to $W_2$ in the sense that,
for any absolutely continuous measures $\nu_0,\nu_1 \in \mathcal{P}^2(X)$, there exists a geodesic $(\nu_\lambda)_{\lambda \in [0,1]}$ between them with respect to $W_2$ such that
\begin{equation}\label{eqn:CD(0,N)}
S_N(\nu_\lambda) \le (1-\lambda)S_N(\nu_0) +\lambda S_N(\nu_1)
\end{equation}
holds for all $\lambda \in [0,1]$.
More generally, one can define $\CD(K,N)$-spaces for any $K\in \R$, where the coefficients $1-\lambda$ and $\lambda$ in \eqref{eqn:CD(0,N)} are replaced with distorsion coefficients that depend on $\lambda$, $K$ and $N$ \cite{SturmII}.

Consider an $n$-dimensional Riemannian manifold $(M,g)$ endowed with a measure $\fm=\e^{-\psi}\,\mathrm{vol}_g$ for a smooth function $\psi \in C^{\infty}(M)$, where $\mathrm{vol}_g$ is the volume measure induced from $g$.
The \emph{weighted Ricci curvature} (a.k.a.\ \emph{Bakry--\'Emery--Ricci curvature}) of the weighted Riemannian manifold $(M,g,\fm)$ is defined by
\begin{equation}\label{eq:Ric_N}
\Ric_N(v) :=\Ric_g(v) +\mathrm{Hess}\,\psi(v,v) -\frac{\langle \nabla \psi,v \rangle^2}{N-n}
\end{equation}
for $v \in TM$ and $N \in (-\infty,0] \cup (n,\infty)$ ($\Ric_g$ is the usual Ricci curvature of $g$).
We also define $\Ric_\infty$ and $\Ric_n$ as the limits. 
By definition, we have the monotonicity
\begin{equation}\label{eq:mono}
\Ric_n \le \Ric_N \le \Ric_{\infty} \le \Ric_{N'} \le \Ric_0
\end{equation}
for $n<N<\infty$ and $-\infty<N'<0$ ($\Ric_\infty$ can be also regarded as $\Ric_{-\infty}$).
Thus, for example, $\Ric_{N'} \ge 0$ is a weaker condition than $\Ric_N \ge 0$.
Note also that $\Ric_n=\lim_{N \downarrow n}\Ric_N \ge 0$ can make sense only when $\psi$ is constant.

A weighted Riemannian manifold $(M,g,\fm)$ is a $\CD(0,N)$-space if and only if the weighted Ricci curvature $\Ric_N$ is nonnegative \cite{CMS01,CMS06,LottVillani,Oneg,Oneedle,vRS,SturmI,SturmII}.

Moreover, the equivalence between $\Ric_N \ge 0$ and $\CD(0,N)$ also holds true for Finsler manifolds \cite{Oint}.
Then, to develop a genuinely Riemannian theory, the \emph{Riemannian curvature-dimension condition} $\RCD(0,N)$ was introduced as the combination of $\CD(0,N)$ and the so-called \emph{infinitesimal Hilbertianity} (or, equivalently, the linearity of heat flow) \cite{AGS,EKS,gsplit}.
In $\RCD(0,N)$-spaces, we can obtain much finer properties including a splitting theorem discussed below.
We refer to \cite{Sturm24} for a recent survey.

\subsubsection{Splitting theorems.}

For a Riemannian manifold $(M,g)$ of nonnegative Ricci curvature, Cheeger--Gromoll's celebrated \emph{splitting theorem} \cite{CG} asserts that, if there is a straight line $\gamma\colon \R \lra M$, then $M$ is isometric to a product space $\R \times \Sigma$, where $\Sigma$ is a Riemannian manifold of nonnegative Ricci curvature, and the Busemann function $\bb_\gamma$ as in \eqref{eq:Buse} coincides with the projection to $\R$.
The splitting theorem was generalized to $\RCD(0,N)$-spaces by Gigli \cite[Theorem 1.4]{gsplit},\cite{Greview}.
In short, it states that if $(X,\dist_X,\fm)$ is an $\RCD(0,N)$-space for some $N\in (1,\infty)$ including a straight line $\gamma\colon \R\lra X$, then $X$ is isometric to a product space $\R \times Y$ where $(Y,\dist_Y,\fn)$ is an $\RCD(0,N-1)$-space when $N \ge 2$, and $Y$ is a singleton when $N \in (1,2)$.
(We will not consider the case of $N=1$, since $\RCD(0,N)$ with $N>1$ is weaker than $\RCD(0,1)$.)

In general, such an isometric splitting is false for $\CD(0,N)$-spaces, unless the infinitesimal Hilbertianity is assumed (see \eqref{prob:B} in Section~\ref{sc:outro}).
This is why, in our main results, we consider only $\RCD$-spaces, although some intermediate results may be stated in more generality for $\CD$-spaces.

\subsection{Main results}

\begin{theorem}[Main theorem; $N>1$]\label{thm:main}
Let $(X,\dist_X,\fm)$ be an $\RCD(0,N)$-space with $N \in (1,\infty)$, $\mu=\rho\fm \in \mathcal{P}^1(X)$ with a measurable function $\rho\colon X \lra [0,\infty)$, and $x_0 \in X$ be any barycenter of $\mu$.
Suppose that there is a straight line $\gamma\colon \R \lra X$.

\begin{enumerate}[{\rm (i)}]
\item\label{main_pos}
If $\rho^{1/(\beta-N)}$ is concave on $\rho^{-1}((0,\infty))$ for some $\beta>N$, then the Busemann function $\bb_{\gamma}\colon X \lra \R$ satisfies
\begin{align}
\mu \bigl( \{ x \in X \mid \bb_{\gamma}(x) \le \bb_{\gamma}(x_0) \} \bigr)
&\ge \biggl( \frac{\beta}{\beta +1} \biggr)^\beta, 
\label{eq:main}\\
\mu \bigl( \{ x \in X \mid \bb_{\gamma}(x) \ge \bb_{\gamma}(x_0) \} \bigr)
&\ge \biggl( \frac{\beta}{\beta +1} \biggr)^\beta.
\nonumber
\end{align}

\item\label{main_inf}
If $\log\rho\colon X \lra \R \cup \{-\infty\}$ is concave, then $\bb_\gamma$ satisfies
\begin{align*}
\mu \bigl( \{ x \in X \mid \bb_{\gamma}(x) \le \bb_{\gamma}(x_0) \} \bigr) &\ge \e^{-1}, \\
\mu \bigl( \{ x \in X \mid \bb_{\gamma}(x) \ge \bb_{\gamma}(x_0) \} \bigr) &\ge \e^{-1}.
\end{align*}

\item\label{main_neg}
If $\rho^{1/(\beta -N)}\colon X \lra \R \cup \{\infty\}$ is convex for some $\beta<-1$, then we have
\begin{align*}
\mu \bigl( \{ x \in X \mid \bb_{\gamma}(x) \le \bb_{\gamma}(x_0) \} \bigr)
&\ge \biggl( \frac{\beta}{\beta +1} \biggr)^\beta, \\
\mu \bigl( \{ x \in X \mid \bb_{\gamma}(x) \ge \bb_{\gamma}(x_0) \} \bigr)
&\ge \biggl( \frac{\beta}{\beta +1} \biggr)^\beta.
\end{align*}
\end{enumerate}
\end{theorem}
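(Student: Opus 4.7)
The plan is to combine Gigli's isometric splitting theorem with a one-dimensional marginalization via a Borell--Brascamp--Lieb-type (BBL) inequality in $\RCD$-spaces, and to finish with the classical one-dimensional Gr\"unbaum inequality for $p$-concave densities. Since $X$ admits a straight line $\gamma$, Gigli's splitting yields a measure-preserving isometry $X\simeq\R\times Y$, where $Y$ is a point when $N\in(1,2)$ and an $\RCD(0,N-1)$-space $(Y,\dist_Y,\fn)$ otherwise, with $\fm=\mathcal{L}^1\otimes\fn$ and $\bb_\gamma$ equal to the projection onto $\R$. Define the marginal density $\tilde\rho(t):=\int_Y\rho(t,y)\,\fn(\diff y)$ and the push-forward $\tilde\mu:=(\bb_\gamma)_\ast\mu=\tilde\rho\,\mathcal{L}^1$. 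The splitting of the squared distance as $\dist_X^2((s,y'),(t,y))=(s-t)^2+\dist_Y^2(y',y)$ implies that the barycenter problem for $\mu$ decouples across the two factors, so if $x_0=(t_0,y_0)$ is a barycenter of $\mu$ then $t_0$ is a barycenter of $\tilde\mu$ on $\R$. Hence each inequality reduces to a one-dimensional Gr\"unbaum-type bound for $\tilde\mu$ at $t_0$.

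The heart of the argument is to transfer the concavity/convexity hypothesis on $\rho$ to $\tilde\rho$. Fix $s_0,s_1\in\R$, $\lambda\in[0,1]$, and set $s_\lambda:=(1-\lambda)s_0+\lambda s_1$. For any $y,z\in Y$ joined by a geodesic $(\eta_\lambda)_{\lambda\in[0,1]}$, the straight path $(s_\lambda,\eta_\lambda)$ is a geodesic in $X$, along which the hypothesis in case (i) gives
\[
\rho(s_\lambda,\eta_\lambda)^{1/(\beta-N)}\ge(1-\lambda)\rho(s_0,y)^{1/(\beta-N)}+\lambda\rho(s_1,z)^{1/(\beta-N)}.
\]
This is exactly the pointwise hypothesis of the BBL inequality in the $\RCD(0,N-1)$-space $Y$ with exponent $p:=1/(\beta-N)$, whose conclusion promotes the exponent as $p\mapsto p/(1+(N-1)p)=1/(\beta-1)$ and yields
\[
\tilde\rho(s_\lambda)^{1/(\beta-1)}\ge(1-\lambda)\tilde\rho(s_0)^{1/(\beta-1)}+\lambda\tilde\rho(s_1)^{1/(\beta-1)},
\]
i.e., the one-dimensional $1/(\beta-1)$-concavity of $\tilde\rho$ at the effective dimension $\beta$. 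Cases (ii) and (iii) are treated identically with the limit exponent $p=0$ (Pr\'ekopa--Leindler) and with $p<0$, respectively; the formula $p\mapsto 1/(\beta-1)$ is algebraic and insensitive to sign, and in case (iii) the resulting $1/(\beta-1)$-concavity translates to convexity of $\tilde\rho^{1/(\beta-1)}$ since $\beta-1<0$.

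The remaining one-dimensional claim is that a probability density $\tilde\rho$ on $\R$ which is $1/(\beta-1)$-concave (resp.\ log-concave, resp.\ $1/(\beta-1)$-convex with $\beta<-1$) puts mass at least $(\beta/(\beta+1))^\beta$ (resp.\ $\e^{-1}$) on either side of any barycenter. This is the classical Gr\"unbaum computation: after shifting the barycenter to $0$, one dominates $\tilde\rho$ from below on each side by a ``cone'' density of the form $c(a-t)_+^{\beta-1}$ (or $c\,\e^{-t}$ in the log-concave case) with matching value at $0$ and matching mass on that side, and then verifies by direct integration that the cone's barycenter lies at the $(\beta/(\beta+1))^\beta$-quantile. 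I expect the main obstacle to lie in the middle step: one must invoke Borell--Brascamp--Lieb rigorously in the $\RCD(0,N-1)$ setting across all three concavity regimes---including the negative-exponent regime relevant to case (iii)---while tracking the sign of $\beta-N$ and ensuring that $\{\tilde\rho>0\}$ is a convex subset of $\R$ on which the inherited (con)cavity has the correct orientation. The splitting and the 1-d Gr\"unbaum reduction should be essentially routine once the BBL step is in place.
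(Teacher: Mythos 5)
Your proposal matches the paper's first proof of Theorem~\ref{thm:main} (the one in Subsection~\ref{ssc:1st} without localization): Gigli's splitting to write $X\cong\R\times Y$, the decoupling of the barycenter along the $\R$-factor (the paper's Lemma~\ref{lm:0mean}), the Borell--Brascamp--Lieb step in the $\RCD(0,N-1)$-factor to transfer $(\beta-N)^{-1}$-concavity of $\rho$ into $(\beta-1)^{-1}$-concavity of the marginal density $w=\bb_\gamma\#\mu/\diff x$ (the paper's Proposition~\ref{pr:proj}), and finally the one-dimensional Gr\"unbaum bound (the paper's Lemmas~\ref{lm:Grunbaum1d} and~\ref{lm:1d_neg}). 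The paper in addition offers a second proof via Cavalletti--Mondino localization (Subsection~\ref{ssc:2nd}), which it needs for the rigidity analysis, but that is not required for the inequalities themselves; your sketch of the one-dimensional finish is phrased in terms of pointwise domination of the density by a cone rather than the paper's comparison of cumulative distribution functions via concavity of $R^{1/\beta}$, but the content is equivalent.
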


\begin{remark}\label{rm:main}
Under the hypothesis in \eqref{main_pos}, it follows from Lemma~\ref{lm:beta} that $(X,\dist_X,\mu)$ is an $\RCD(0,\beta)$-space.
Then, by virtue of Lemma~\ref{lm:bdd}, the support of $\mu$ is necessarily bounded.
In particular, $\mu$ is of finite second moment.
\end{remark}

The condition on $\rho$ in \eqref{main_pos} is equivalent to the concavity of $\tilde{\rho}$ by setting $\tilde{\rho}(x):=\rho(x)^{1/(\beta -N)}$ if $\rho(x)>0$ and $\tilde{\rho}(x):=-\infty$ if $\rho(x)=0$.
Note that, on the one hand, $\supp(\mu)$ is convex in all the cases \eqref{main_pos}--\eqref{main_neg}.
On the other hand, given a convex set $\Omega\subset X$ with $\fm(\Omega) \in (0,\infty)$, the uniform distribution $\mu_{\Omega} :=\fm(\Omega)^{-1} \chi_\Omega \,\fm$ on $\Omega$ satisfies the hypothesis of \eqref{main_pos} for any $\beta>N$, where $\chi_\Omega$ is the indicator function of $\Omega$ (with value $1$ on $\Omega$ and $0$ on $X \setminus \Omega$).
Hence, taking the limit as $\beta \downarrow N$ yields the following corollary, as a direct extension of Gr\"{u}nbaum's inequality.
By a \emph{barycenter} of $\Omega$, we will mean a barycenter of $\mu_{\Omega}$.

\begin{corollary}\label{cr:main}
Let $\Omega$ be a convex set in an $\RCD(0,N)$-space $(X,\dist_X,\fm)$ with $N \in (1,\infty)$ such that $\fm(\Omega) \in (0,\infty)$,
and let $x_0 \in X$ be a barycenter of $\Omega$.
Then, for any straight line $\gamma\colon \R \lra X$,
the associated Busemann function $\bb_{\gamma}\colon X \lra \R$ satisfies
\begin{align*}
\fm \bigl( \{ x \in \Omega \mid \bb_{\gamma}(x) \le \bb_{\gamma}(x_0) \} \bigr)
&\ge \biggl( \frac{N}{N+1} \biggr)^N \cdot \fm(\Omega), \\
\fm \bigl( \{ x \in \Omega \mid \bb_{\gamma}(x) \ge \bb_{\gamma}(x_0) \} \bigr)
&\ge \biggl( \frac{N}{N+1} \biggr)^N \cdot \fm(\Omega).
\end{align*}
\end{corollary}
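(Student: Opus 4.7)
The strategy is straightforward: apply Theorem~\ref{thm:main}\eqref{main_pos} to the uniform distribution $\mu_\Omega$ for each parameter $\beta>N$, and then let $\beta\downarrow N$.

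To invoke \eqref{main_pos}, I first check that its hypothesis holds for $\mu_\Omega$. Take the representative $\rho = \fm(\Omega)^{-1}\chi_\Omega$, so that $\rho^{-1}((0,\infty))=\Omega$, which is convex by assumption. On this set $\rho^{1/(\beta-N)}$ is a positive constant, hence trivially concave, so the hypothesis of \eqref{main_pos} is satisfied for every $\beta>N$. Moreover, Remark~\ref{rm:main} then guarantees that $(X,\dist_X,\mu_\Omega)$ is an $\RCD(0,\beta)$-space and that $\supp(\mu_\Omega)\subset\Omega$ is bounded; in particular $\mu_\Omega\in\mathcal{P}^1(X)$, so the notion of barycenter appearing in Theorem~\ref{thm:main} is well-defined and matches the one in the statement of the corollary.

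Fix any barycenter $x_0$ of $\mu_\Omega$; crucially, this choice depends only on $\mu_\Omega$, not on $\beta$. Theorem~\ref{thm:main}\eqref{main_pos} then yields
\begin{equation*}
\mu_\Omega\bigl(\{x\in X \mid \bb_\gamma(x)\le \bb_\gamma(x_0)\}\bigr)
\ge \biggl(\frac{\beta}{\beta+1}\biggr)^\beta
\end{equation*}
for every $\beta>N$, together with the symmetric inequality for the opposite sublevel set. The left-hand side is independent of $\beta$, while the right-hand side is continuous in $\beta$, so letting $\beta\downarrow N$ produces the bound $(N/(N+1))^N$. Since $\mu_\Omega$ is supported on $\Omega$, the set on the left may be intersected with $\Omega$ without changing its $\mu_\Omega$-mass; multiplying through by $\fm(\Omega)$ then converts the resulting mass bound into the claimed volume bound on $\fm$, and the reverse inequality is obtained identically.

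There is essentially no obstacle beyond Theorem~\ref{thm:main} itself: the only delicate point worth flagging is that the sublevel set on the left-hand side is the \emph{same} for every $\beta$, so passing to the limit $\beta\downarrow N$ requires no continuity, tightness, or interchange-of-limit argument.
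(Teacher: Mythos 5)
Your proof is correct and follows exactly the paper's own route: the paper derives the corollary by observing that $\mu_\Omega$ satisfies the concavity hypothesis of Theorem~\ref{thm:main}\eqref{main_pos} for every $\beta>N$ (the density is a positive constant on the convex set $\Omega$) and then letting $\beta\downarrow N$, with boundedness of $\Omega$ supplied by Lemma~\ref{lm:bdd}. Your added remark that the sublevel set is the same for all $\beta$, so the limit is immediate, is a sound and clarifying observation.
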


We remark that a barycenter of $\Omega$ may not be unique in this generality.
For instance, consider the cylinder $X=\R \times \Sph^1$ endowed with the $2$-dimensional Hausdorff measure, which is an $\RCD(0,2)$-space.
Then, for $\Omega=[-1,1] \times \Sph^1$, we find that any point on the circle $\{0\} \times \Sph^1$ is a barycenter of $\Omega$.
It seems unclear (to the authors) if every barycenter of $\Omega$ lives in $\Omega$.

\begin{remark}\label{rm:horo}
A subset of the form $\bb_\gamma^{-1}((-\infty,r])$, for some straight line $\gamma$ and $r \in \R$, is called a (closed) \emph{horoball}.
Thus, Corollary~\ref{cr:main} can be rephrased by saying that every horoball (or the closure of the complement of a horoball) including $x_0$ has mass at least $(N/(N+1))^N \cdot \fm(\Omega)$.
In Euclidean spaces, horoballs are simply closed halfspaces.
In general, horoballs are not geodesically convex, unless $X$ is nonpositively curved.
See, for example, \cite{LLN} concerning convex optimization on Hadamard spaces by means of horoballs.
\end{remark}

We also study when equality holds in \eqref{eq:main}.
Roughly speaking, equality holds only when $\mu$ has a cone structure (see Theorem~\ref{th:rigid} for the precise statement).
This kind of \emph{rigidity} for geometric and analytic inequalities is one of the major problems in comparison geometry and geometric analysis (see, e.g., \cite{GKKO,OT}).
Cavalletti--Mondino's \emph{localization} (also called \emph{needle decomposition}) \cite{CM}, together with a detailed one-dimensional analysis, plays a crucial role in our rigidity result.
We can even consider the \emph{stability} problem in a similar way
(see Section~\ref{sc:stab}).
We refer to \cite{Gr00} for a stability result concerning Gr\"unbaum's inequality in the Euclidean setting, in terms of the volume of the symmetric difference from a cone.

The main ingredients of the proofs of our results are
Gigli's splitting theorem for $\RCD(0,N)$-spaces and Cavalletti--Mondino's localization for essentially non-branching $\CD(K,N)$-spaces as we mentioned above.
In fact, the formulation of our results using a straight line is strongly inspired by the splitting theorem.
Both these ingredients are valid only for $N \in (1,\infty)$ in this generality.

Nonetheless, in the smooth setting of weighted Riemannian manifolds, the isometric splitting is known by Lichnerowicz, Fang--Li--Zhang \cite{Li,FLZ} ($N=\infty$) and Wylie \cite{Wy} ($N \in (-\infty,1)$),
and the localization is also available by Klartag \cite{Kl}.
Thus, we have the following counterparts to Theorem~\ref{thm:main} and Corollary~\ref{cr:main}.


\begin{theorem}[Main theorem; $N=\infty$, $N<-1$]\label{th:neg}
Let $(M,g,\fm)$, $\fm=\e^{-\psi} \,\mathrm{vol}_g$, be a complete weighted Riemannian manifold of $\Ric_N \ge 0$ for $N=\infty$ or $N \in (-\infty,-1)$, where $\psi \in C^2(M)$ is bounded from above,
and $\mu=\rho\fm \in \mathcal{P}^1(M)$ with $\rho \colon M \lra [0,\infty)$.

\begin{enumerate}[{\rm (i)}]
\item\label{main2_inf}
Suppose that $N=\infty$ and $\log\rho \colon M \lra \R \cup \{-\infty\}$ is concave.
Then, for any barycenter $x_0 \in M$ of $\mu$ and any straight line $\gamma\colon \R \lra M$,
the associated Busemann function $\bb_{\gamma}\colon M \lra \R$ satisfies
\begin{align}
\mu \bigl( \{ x \in M \mid \bb_{\gamma}(x) \le \bb_{\gamma}(x_0) \} \bigr) &\ge \e^{-1}, \label{eq:main2_inf}\\
\mu \bigl( \{ x \in M \mid \bb_{\gamma}(x) \ge \bb_{\gamma}(x_0) \} \bigr) &\ge \e^{-1}. \nonumber
\end{align}

\item\label{main2_neg}
Suppose that $N \in (-\infty,-1)$ and $\rho^{1/(\beta -N)}$ is concave on $\rho^{-1}((0,\infty))$ for some $\beta \in (N,-1)$.
Then, for $x_0 \in M$ and $\bb_{\gamma}$ as above, we have
\begin{align}
\mu \bigl( \{ x \in M \mid \bb_{\gamma}(x) \le \bb_{\gamma}(x_0) \} \bigr)
&\ge \biggl( \frac{\beta}{\beta +1} \biggr)^\beta, \label{eq:main2_neg}\\
\mu \bigl( \{ x \in M \mid \bb_{\gamma}(x) \ge \bb_{\gamma}(x_0) \} \bigr)
&\ge \biggl( \frac{\beta}{\beta +1} \biggr)^\beta. \nonumber
\end{align}
\end{enumerate}
\end{theorem}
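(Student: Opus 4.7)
The plan is to follow the strategy of the proof of Theorem~\ref{thm:main}, substituting the smooth-setting analogues of the splitting theorem used there. First, I invoke the appropriate isometric splitting along $\gamma$: for $N=\infty$ this is the Lichnerowicz--Fang--Li--Zhang splitting, and for $N\in(-\infty,-1)$ this is Wylie's splitting. In both settings, the hypothesis that $\psi$ is bounded from above is exactly what is needed to kill the drift component and obtain a genuine product $M \cong \R \times \Sigma$ together with a decomposition $\psi(t,y) = \psi_\Sigma(y)$, so that $\fm = \diff t \otimes \fn$ for some reference measure $\fn$ on $\Sigma$, and the weighted manifold $(\Sigma, g_\Sigma, \fn)$ inherits the analogous $\Ric_N \ge 0$ condition. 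Under this identification, $\bb_{\gamma}(t,y) = t$ up to an additive constant.

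Second, I reduce to a one-dimensional problem on $\R$. Define the marginal density $\bar\rho(t) := \int_\Sigma \rho(t,y)\,\fn(\diff y)$, so that the pushforward $(\bb_{\gamma})_\ast \mu$ has density $\bar\rho$ with respect to Lebesgue measure. From the product form $\dist_M^2((t_0,y_0),(t,y)) = (t-t_0)^2 + \dist_\Sigma^2(y_0,y)$, the defining variational problem for a barycenter splits into minimizations in the two factors; in particular, if $x_0=(t_0,y_0)$ is a barycenter of $\mu$ in $M$, then $t_0$ is a barycenter of $\bar\rho(t)\,\diff t$ in $\R$. Next, I transfer the concavity hypothesis on $\rho$ to $\bar\rho$ by applying the weighted Borell--Brascamp--Lieb inequality to $\rho$ viewed as a function on $\R \times \Sigma$: log-concavity of $\rho$ yields log-concavity of $\bar\rho$ in case~\eqref{main2_inf}, while concavity of $\rho^{1/(\beta-N)}$ yields concavity of $\bar\rho^{1/(\beta-1)}$ in case~\eqref{main2_neg}. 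The shift in the dimension parameter from $N$ to $1$ on the fiber side reflects integrating out the $\Sigma$-factor, whereas the shape parameter $\beta$ is unchanged. This BBL step is legitimate because $(\Sigma,g_\Sigma,\fn)$ is a smooth weighted manifold that satisfies the corresponding curvature-dimension condition in the relevant range.

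Finally, I apply to $\bar\rho$ and $t_0$ the one-dimensional Gr\"{u}nbaum-type lemma already used in the proof of Theorem~\ref{thm:main}: this yields the desired lower bounds $\e^{-1}$ in case~\eqref{main2_inf} and $(\beta/(\beta+1))^\beta$ in case~\eqref{main2_neg} for the masses of the two half-lines determined by $t_0$, and these pull back under the splitting to the stated bounds on $\mu$ of the horoballs $\{\bb_{\gamma} \le \bb_{\gamma}(x_0)\}$ and $\{\bb_{\gamma} \ge \bb_{\gamma}(x_0)\}$. The main obstacle will be the case $N \in (-\infty,-1)$: one must apply Wylie's splitting carefully in the presence of only a one-sided bound on $\psi$, and verify that the weighted BBL step transfers the exponent $1/(\beta-N)$ precisely to $1/(\beta-1)$ so that the correct ``shape'' of concavity survives on the marginal. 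The case $N=\infty$ is more transparent, because $\Ric_\infty$ depends linearly on $\psi$, and both the splitting and the needed integration inequality (a Pr\'ekopa--Leindler inequality for $(\Sigma,g_\Sigma,\fn)$) are classical.
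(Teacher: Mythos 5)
Your proposal follows essentially the same route as the paper's first proof (the one without localization, Subsection~\ref{ssc:proof1_neg}): splitting via Fang--Li--Zhang (\,$N=\infty$\,) or Wylie (\,$N<-1$\,), the barycenter identity $\int_M\bb_\gamma\diff\mu=0$, the fiberwise Borell--Brascamp--Lieb argument on $\Sigma$ to obtain the relevant one-dimensional concavity of the marginal, and then the one-dimensional Gr\"unbaum lemma. So the strategy is correct and aligns with the paper.

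There is one sign-related slip to fix, and it is exactly the point you flag as needing verification. In case~\eqref{main2_neg} you claim ``concavity of $\rho^{1/(\beta-N)}$ yields concavity of $\bar\rho^{1/(\beta-1)}$.'' Since $\beta\in(N,-1)$ we have $\beta-N>0$ but $\beta-1<0$: the hypothesis is usual concavity of a positive power of $\rho$, but the conclusion that matches the one-dimensional lemma is \emph{convexity} of $\bar\rho^{1/(\beta-1)}$ (equivalently, the one-dimensional $\CD(0,\beta)$ condition for $\beta<1$; see Proposition~\ref{prop:proj_neg}\eqref{key'_neg} and the beginning of Subsection~\ref{ssc:1dCD}). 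Lemma~\ref{lm:1d_neg}\eqref{1d_neg} is stated for $w^{1/(N-1)}$ convex, and it is applied here with the parameter $\beta$ in place of $N$; so you must land on convexity, not concavity. The BBL computation carrying this out (with the appropriate generalization of BBL to the $N<0$ range from \cite{Oneg}, applied on $(\Sigma,g_\Sigma,\fn)$ with $\Ric_{N-1}\ge0$) is precisely where this sign transition happens. The $N=\infty$ case as you state it is fine.
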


We remark that the upper boundedness of $\psi$ is assumed for applying the splitting theorem (see Remark~\ref{rm:psi}).

\begin{corollary}\label{cr:neg}
Let $(M,g,\fm)$ be as in Theorem~$\ref{th:neg}$,
and $\Omega \subset M$ be a convex set such that $\fm(\Omega) \in (0,\infty)$ and $\fm(\Omega)^{-1} \chi_\Omega\,\fm \in \mathcal{P}^1(M)$.
Then, for any barycenter $x_0 \in X$ of $\Omega$ and any straight line $\gamma\colon \R \lra X$,
the associated Busemann function $\bb_{\gamma}\colon X \lra \R$ satisfies
\begin{align*}
\fm \bigl( \{ x \in \Omega \mid \bb_{\gamma}(x) \le \bb_{\gamma}(x_0) \} \bigr)
&\ge \e^{-1} \cdot \fm(\Omega), \\
\fm \bigl( \{ x \in \Omega \mid \bb_{\gamma}(x) \ge \bb_{\gamma}(x_0) \} \bigr)
&\ge \e^{-1} \cdot \fm(\Omega)
\end{align*}
when $N=\infty$, and
\begin{align*}
\fm \bigl( \{ x \in \Omega \mid \bb_{\gamma}(x) \le \bb_{\gamma}(x_0) \} \bigr)
&\ge \biggl( \frac{N}{N+1} \biggr)^N \cdot \fm(\Omega), \\
\fm \bigl( \{ x \in \Omega \mid \bb_{\gamma}(x) \ge \bb_{\gamma}(x_0) \} \bigr)
&\ge \biggl( \frac{N}{N+1} \biggr)^N \cdot \fm(\Omega)
\end{align*}
for $N \in (-\infty,-1)$.
\end{corollary}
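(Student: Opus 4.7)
The plan is to derive Corollary~\ref{cr:neg} from Theorem~\ref{th:neg} in exactly the way Corollary~\ref{cr:main} is obtained from Theorem~\ref{thm:main}: I apply the theorem to the uniform probability measure $\mu_\Omega := \fm(\Omega)^{-1}\chi_\Omega\,\fm$, whose density is $\rho = \fm(\Omega)^{-1}\chi_\Omega$. The hypothesis $\fm(\Omega)^{-1}\chi_\Omega\,\fm \in \mathcal{P}^1(M)$ is built into the statement, and by the convention used in the corollary $x_0$ is also a barycenter of $\mu_\Omega$, so all that remains is to check the concavity assumption on $\rho$ (and, in the second case, to pass to a limit in $\beta$).

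For $N=\infty$ I need to verify that $\log\rho \colon M \to \R \cup \{-\infty\}$ is concave. On $\Omega$ this function is the constant $-\log\fm(\Omega)$ and elsewhere it equals $-\infty$. Since $\Omega$ is geodesically convex, for any geodesic $\gamma\colon[0,1]\lra M$ either both endpoints lie in $\Omega$ (so the whole image does and the concavity inequality becomes an equality) or at least one endpoint lies outside $\Omega$ (in which case the right-hand side equals $-\infty$ and the inequality is automatic). Hence Theorem~\ref{th:neg}(i) applies and, after multiplying the probability bound by $\fm(\Omega)$, yields the asserted $N=\infty$ inequalities. For $N \in (-\infty,-1)$, I pick any $\beta \in (N,-1)$: on $\rho^{-1}((0,\infty))=\Omega$ the function $\rho^{1/(\beta-N)}$ is constant and hence trivially concave, so Theorem~\ref{th:neg}(ii) gives
\[
\mu_\Omega\bigl(\{x \in M \mid \bb_\gamma(x) \le \bb_\gamma(x_0)\}\bigr) \ge \biggl(\frac{\beta}{\beta+1}\biggr)^\beta
\]
together with the symmetric bound for the superlevel set.

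The left-hand sides above are independent of $\beta$, while $\beta\mapsto (\beta/(\beta+1))^\beta$ is continuous on $(-\infty,-1)$, so letting $\beta \downarrow N$ (which is legitimate since $N \neq -1$) upgrades the bound to $(N/(N+1))^N$. Multiplying by $\fm(\Omega)$ then completes the $N \in (-\infty,-1)$ case. There is no substantive obstacle here: once Theorem~\ref{th:neg} is in hand, the argument is a straightforward verification of hypotheses for $\mu_\Omega$, together with the continuity step just described; the only minor care required is in performing the limit $\beta \downarrow N$, which is well-defined precisely because $\beta = -1$ is excluded from the range of interest.
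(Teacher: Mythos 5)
Your proposal is correct and follows exactly the route the paper itself uses (the paper does not spell out a separate proof of Corollary~\ref{cr:neg}, but explains the identical argument for Corollary~\ref{cr:main} in the paragraph preceding it): apply the theorem to the uniform measure $\mu_\Omega=\fm(\Omega)^{-1}\chi_\Omega\fm$, use that constancy of $\rho$ on the convex set $\Omega$ makes the relevant concavity hypothesis automatic, and for $N\in(-\infty,-1)$ let $\beta\downarrow N$ using continuity of $\beta\mapsto(\beta/(\beta+1))^\beta$ on $(-\infty,-1)$.
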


The rest of the article is organized as follows.
In Section~\ref{sc:prel}, we review some necessary properties of $\RCD(0,N)$-spaces.
We also illustrate our results in Euclidean spaces, recovering known results and obtaining new ones.
Section~\ref{sc:1d} is devoted to the one-dimensional analysis, which plays a central role in the following discussions.
In Section~\ref{sc:proof}, we prove Theorem~\ref{thm:main} and also study the rigidity problem.
By a similar analysis, in Section~\ref{sc:neg}, we prove Theorem~\ref{th:neg} as well as the corresponding rigidity result.
We give some stability results in Section~\ref{sc:stab},
and close the article with several further problems in Section~\ref{sc:outro}.

\section{Preliminaries}\label{sc:prel}

\subsection{Properties of $\RCD(0,N)$-spaces}

The next lemma may be a well known fact (cf.\ \cite[Theorem~7]{Ya}), but we could not find in the literature.

\begin{lemma}\label{lm:bdd}
Let $(X,\dist_X,\fm)$ be a $\CD(0,N)$-space with $N \in (1,\infty)$.
If $\fm(X)<\infty$, then the diameter of $X$ is finite.
\end{lemma}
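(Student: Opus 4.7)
My plan is to argue by contradiction, using the Brunn--Minkowski inequality for $\CD(0,N)$-spaces as the main input. First I would recall how BM follows from~\eqref{eqn:CD(0,N)}: for bounded measurable $A,B \subset X$ with $\fm(A),\fm(B) \in (0,\infty)$, apply displacement convexity of $S_N$ to the uniform distributions $\nu_0 = \fm(A)^{-1}\chi_A\fm$ and $\nu_1 = \fm(B)^{-1}\chi_B\fm$, and combine with the H\"older bound
\[
-S_N(\nu_{1/2}) \;=\; \int \zeta_{1/2}^{(N-1)/N} \diff\fm \;\le\; \fm\bigl(\supp\zeta_{1/2}\bigr)^{1/N},
\]
obtained from H\"older with exponents $N/(N-1)$ and $N$ against $\int \zeta_{1/2}\,\diff\fm = 1$. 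Since $\supp\zeta_{1/2} \subseteq M_{1/2}(A,B)$, the set of midpoints of minimal geodesics from $A$ to $B$, this yields
\[
\fm\bigl(M_{1/2}(A,B)\bigr)^{1/N} \;\ge\; \tfrac{1}{2}\fm(A)^{1/N} + \tfrac{1}{2}\fm(B)^{1/N}.
\]

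\textbf{Obtaining the contradiction.} Suppose $\diam X = \infty$. I would fix any $x \in X$ and $r>0$, so that $c_0 := \fm(B_r(x)) \in (0,\infty)$ by the standing assumption that $\fm$ is positive and finite on nonempty bounded open sets. Using the unboundedness of $X$, inductively choose $y_k \in X$ with $\dist_X(x,y_k) > \dist_X(x,y_{k-1}) + 10r$; each $\fm(B_r(y_k)) \in (0,\infty)$ as well. Applying the BM inequality above to $B_r(x)$ and $B_r(y_k)$ gives
\[
\fm\bigl(M_{1/2}(B_r(x),B_r(y_k))\bigr) \;\ge\; 2^{-N} c_0.
\]
A short triangle-inequality bookkeeping (using $\dist_X(p,q) \in [\dist_X(x,y_k)-2r,\,\dist_X(x,y_k)+2r]$ for $p \in B_r(x)$, $q \in B_r(y_k)$, and $\dist_X(p,z) = \dist_X(p,q)/2$ at a midpoint $z$) yields $|\dist_X(x,z) - \tfrac{1}{2}\dist_X(x,y_k)| \le 2r$ for every $z \in M_{1/2}(B_r(x),B_r(y_k))$. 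The spacing condition on $(y_k)$ then makes the corresponding $4r$-intervals pairwise disjoint, so the midpoint sets themselves are pairwise disjoint. Summing their $\fm$-masses produces $\fm(X) \ge \sum_k 2^{-N} c_0 = \infty$, contradicting $\fm(X) < \infty$.

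\textbf{Main obstacle.} The delicate point will be justifying BM for $\CD(0,N)$-spaces in this generality, without assuming essential non-branching: one needs both that the $W_2$-geodesic $(\nu_\lambda)$ provided by~\eqref{eqn:CD(0,N)} has $\supp(\nu_{1/2})$ contained in the (analytic) midpoint set $M_{1/2}(A,B)$, and that the latter is $\fm$-measurable, so that the H\"older step above is legitimate. Both facts are by now standard in the $\CD$ literature (see \cite{SturmII}); should measurability cause any issue, one can enlarge $M_{1/2}(A,B)$ to any measurable superset without affecting the disjointness computation.
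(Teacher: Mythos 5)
Your proof is correct, but it follows a genuinely different route from the paper. The paper's argument uses the Bishop--Gromov volume comparison inequality (from \cite[Theorem~2.3]{SturmII}), observing that if $\dist_X(x_0,x_k)\to\infty$, the ball $B(x_0,1)$ is trapped inside the annulus $B(x_k,\dist_X(x_0,x_k)+1)\setminus B(x_k,\dist_X(x_0,x_k)-1)$ centered at the far-away point $x_k$, whose $\fm$-measure shrinks to zero because the Bishop--Gromov ratio tends to $1$ while $\fm(X)<\infty$; this forces $\fm(B(x_0,1))=0$, a contradiction. Your argument replaces Bishop--Gromov with the Brunn--Minkowski inequality $\mathrm{BM}(0,N)$ (which the paper itself cites as a consequence of $\CD(0,N)$, \cite[Proposition~2.1]{SturmII}, in Remark~\ref{rm:BM}), and then packs infinitely many pairwise disjoint midpoint sets of uniformly bounded-below measure into $X$. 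Both arguments are sound and both lean on a classical corollary of the $\CD$ condition. The Bishop--Gromov route is the more conventional proof of diameter/finite-volume statements and is slightly shorter once the comparison inequality is invoked; your Brunn--Minkowski route is closer to the raw definition of $\CD(0,N)$ -- the inequality you need really is just displacement convexity of $S_N$ plus one application of H\"older -- so it can feel more self-contained. The triangle-inequality bookkeeping, the $10r$ spacing (giving a $5r > 4r$ margin), and the H\"older step with exponents $N/(N-1)$ and $N$ all check out; the measurability worry you flag is harmless since $\supp\zeta_{1/2}$ is closed, hence measurable, and your estimate on $\dist_X(x,z)$ holds (with a non-strict inequality) on the closures as well.
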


\begin{proof}
The proof is based on the following Bishop--Gromov volume comparison theorem \cite[Theorem 2.3]{SturmII}: 
\begin{equation}\label{eq:BG}
\frac{\fm(B(x,R))}{\fm(B(x,r))} \le \biggl( \frac{R}{r} \biggr)^N
\quad\ \text{for all}\ x \in X,\ 0<r<R,
\end{equation}
where $B(x,r)$ denotes the open ball with center $x$ and radius $r$.
Fix $x_0 \in X$ and assume on the contrary that there is a sequence $(x_k)_{k \ge 1}$ in $X$ such that $\dist_X(x_0,x_k) \ge k$ for all $k\geq 1$.
Then we infer from \eqref{eq:BG} that, for $k \ge 2$,
\[ \fm\Bigl( B\bigl( x_k,\dist_X(x_0,x_k)+1 \bigr) \Bigr)
\le \biggl( \frac{\dist_X(x_0,x_k)+1}{\dist_X(x_0,x_k)-1} \biggr)^N \fm\Bigl( B\bigl( x_k,\dist_X(x_0,x_k)-1 \bigr) \Bigr). \]
Thus, we deduce that
\begin{align*}
\fm\bigl( B(x_0,1) \bigr)
&\le \fm\Bigl( B\bigl( x_k,\dist_X(x_0,x_k)+1 \bigr) \Bigr) -\fm\Bigl( B\bigl( x_k,\dist_X(x_0,x_k)-1 \bigr) \Bigr) \\
&\le \biggl\{ \biggl( \frac{\dist_X(x_0,x_k)+1}{\dist_X(x_0,x_k)-1} \biggr)^N -1 \biggr\} \cdot \fm\Bigl( B\bigl( x_k,\dist_X(x_0,x_k)-1 \bigr) \Bigr) \\
&\le \biggl\{ \biggl( \frac{\dist_X(x_0,x_k)+1}{\dist_X(x_0,x_k)-1} \biggr)^N -1 \biggr\} \cdot \fm(X) \\
&\to 0
\end{align*}
as $k \to \infty$.
This implies $\fm(B(x_0,1))=0$, a contradiction.
\end{proof}

In particular, every convex set $\Omega \subset X$ with $\fm(\Omega) \in (0,\infty)$ (as in Corollary~\ref{cr:main}) is bounded,
since $(\Omega,\dist_X|_{\Omega},\fm|_{\Omega})$ is again a $\CD(0,N)$-space.
The above lemma can be applied to the space $(X,\dist_X,\mu)$ in Theorem~\ref{thm:main}\eqref{main_pos} thanks to the following fact.

\begin{lemma}\label{lm:beta}
Let $(X,\dist_X,\fm)$ be an $\RCD(0,N)$-space with $N \in (1,\infty)$,
and $\mu=\rho \fm$ be a measure with $\rho \colon X \lra [0,\infty)$
such that $\mu(X)>0$ and $\rho^{1/(\beta -N)}$ is concave on $\rho^{-1}((0,\infty))$ for some $\beta>N$.
Then, $(\supp(\mu),\dist_X|_{\supp(\mu)},\mu)$ is an $\RCD(0,\beta)$-space.
\end{lemma}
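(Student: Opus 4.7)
The plan is to transfer the $\RCD(0,N)$ structure of $(X,\fm)$ to $(\supp(\mu),\mu)$ by combining the pointwise Monge-type $\CD(0,N)$ inequality with the affine bound on $\rho^{1/(\beta-N)}$ afforded by its concavity, and then verifying infinitesimal Hilbertianity separately.

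For the curvature-dimension bound, I would take two absolutely continuous probability measures $\nu_i=\eta_i\mu$ ($i=0,1$) with $\supp(\nu_i)\subset\supp(\mu)$, rewrite them as $\nu_i=\zeta_i\fm$ with $\zeta_i=\eta_i\rho$, and invoke the essentially non-branching reformulation of $\RCD(0,N)$ to obtain a $W_2$-geodesic $(\nu_\lambda)=(\zeta_\lambda\fm)$ along which the pointwise Monge-type inequality
\[
\zeta_\lambda^{-1/N}\circ T_\lambda \;\ge\; (1-\lambda)\,\zeta_0^{-1/N} + \lambda\,\zeta_1^{-1/N}\circ T_1
\]
holds $\nu_0$-a.e., where $T_\lambda$ is the associated interpolation map. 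Since the interior of the interpolation stays inside $\{\rho>0\}$ and $\rho^{1/(\beta-N)}$ is concave there, one also has
\[
\rho^{1/(\beta-N)}\circ T_\lambda \;\ge\; (1-\lambda)\,\rho^{1/(\beta-N)} + \lambda\,\rho^{1/(\beta-N)}\circ T_1.
\]
Because $\eta_\lambda^{-1/\beta}=(\zeta_\lambda^{-1/N})^{N/\beta}(\rho^{1/(\beta-N)})^{(\beta-N)/\beta}$ with the two exponents in $(0,1)$ summing to $1$, the weighted AM--GM inequality
\[
\bigl((1-\lambda)a_0+\lambda a_1\bigr)^p\bigl((1-\lambda)b_0+\lambda b_1\bigr)^q \;\ge\; (1-\lambda)\,a_0^p b_0^q + \lambda\,a_1^p b_1^q
\qquad(p+q=1,\ p,q\ge 0)
\]
combines the two displays above into the pointwise $\CD(0,\beta)$ bound
\[
\eta_\lambda^{-1/\beta}\circ T_\lambda \;\ge\; (1-\lambda)\,\eta_0^{-1/\beta} + \lambda\,\eta_1^{-1/\beta}\circ T_1.
\]
Integrating this inequality against $\nu_0$ after the change of variables $T_{\lambda\#}\nu_0=\nu_\lambda$ yields the convexity of the $\beta$-R\'enyi entropy $-\int \eta_\lambda^{1-1/\beta}\,\diff\mu$ along $(\nu_\lambda)$, i.e.\ the $\CD(0,\beta)$ bound for $(\supp(\mu),\mu)$.

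For infinitesimal Hilbertianity, note that concavity of $\rho^{1/(\beta-N)}$ makes $\rho$ locally bounded on the interior of $\{\rho>0\}$, so the intrinsic Cheeger energy of $(\supp(\mu),\mu)$ coincides with the $\rho$-weighted Cheeger energy $f\mapsto\int|\nabla f|^2\,\rho\,\diff\fm$ inherited from $(X,\fm)$; since the ambient Cheeger energy is quadratic and $\rho$ appears only linearly in the integrand, the weighted form is again quadratic, so infinitesimal Hilbertianity descends to $(\supp(\mu),\mu)$. Together with $\CD(0,\beta)$ this gives the $\RCD(0,\beta)$ conclusion.

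The principal obstacle is the Monge/strong-$\CD$ reformulation: one must appeal to the essentially non-branching property of $\RCD(0,N)$ to secure a unique optimal interpolation along which both the $\CD(0,N)$ inequality and the concavity bound for $\rho^{1/(\beta-N)}$ can be read simultaneously, and one must verify that the interior of this interpolation remains in $\{\rho>0\}$ so that the second inequality applies throughout. These are standard in the $\RCD$ toolbox but need to be invoked carefully; once in place, the remaining AM--GM manipulation is routine.
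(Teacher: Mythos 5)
Your proposal is essentially the same as the paper's argument: both pass to a pointwise reformulation of $\CD(0,N)$ valid under essential non-branching, exploit convexity of $\supp(\mu)$ to ensure the interpolation stays where the concavity of $\rho^{1/(\beta-N)}$ is available, and then combine the two displays by the Hölder inequality with exponents $N/\beta$ and $(\beta-N)/\beta$ to obtain the pointwise $\CD(0,\beta)$ estimate for the density with respect to $\mu$. The only cosmetic difference is that you phrase the pointwise bound through Monge interpolation maps $T_\lambda$, whereas the paper invokes Sturm's ``almost every geodesic'' reformulation (his Proposition~4.2) and writes the concavity of $\lambda\mapsto\zeta_\lambda(\eta(\lambda))^{-1/N}$ directly; under non-branching these are equivalent formulations. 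You also supply an explicit check of infinitesimal Hilbertianity via quadraticity of the weighted Cheeger energy, which the paper merely asserts in one sentence (``it is sufficient to show $\CD(0,\beta)$''), so this part of your argument is a modest addition rather than a departure.
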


\begin{proof}
We give an outline of the proof for completeness
(we refer to \cite[Theorem~1.7(ii)]{SturmII} for the smooth case).
Note that it is sufficient to show that $(\supp(\mu),\dist_X|_{\supp(\mu)},\mu)$ satisfies $\CD(0,\beta)$.
Since $(X,\dist_X)$ is non-branching by \cite[Theorem 1.3]{deng2020},
in view of \cite[Proposition~4.2]{SturmII},
$(X,\dist_X,\fm)$ being a $\CD(0,N)$-space is equivalent to the concavity of $\lambda \longmapsto \zeta_\lambda (\eta(\lambda))^{-1/N}$ for almost every geodesic $\eta\colon [0,1] \lra X$ along which an $L^2$-optimal transport $(\nu_\lambda)_{\lambda \in [0,1]}$ is done,
where $\nu_\lambda =\zeta_\lambda \fm$.

When we consider $\mu=\rho\fm$ instead of $\fm$, the corresponding density function of $\nu_\lambda$ becomes $\zeta_\lambda \rho^{-1}$, provided $\supp(\nu_{\lambda}) \subset \supp(\mu)$.
Indeed, if $\supp(\nu_0) \cup \supp(\nu_1) \subset \supp(\mu)$, then $\supp(\nu_{\lambda}) \subset \supp(\mu)$ by the convexity of $\supp(\mu)$.
We deduce from the concavity of $\rho^{1/(\beta -N)}$ and the H\"older inequality that
\begin{align*}
[\zeta_\lambda \rho^{-1}] \bigl( \eta(\lambda) \bigr)^{-1/\beta}
&= \Bigl( \zeta_\lambda \bigl( \eta(\lambda) \bigr)^{-1/N} \Bigr)^{N/\beta}
 \Bigl( \rho\bigl( \eta(\lambda) \bigr)^{1/(\beta -N)} \Bigr)^{(\beta -N)/\beta} \\
&\ge \Bigl( (1-\lambda)\zeta_0 \bigl( \eta(0) \bigr)^{-1/N} +\lambda\zeta_1 \bigl( \eta(1) \bigr)^{-1/N} \Bigr)^{N/\beta} \\
&\quad\times \Bigl( (1-\lambda)\rho\bigl( \eta(0) \bigr)^{1/(\beta -N)} +\lambda\rho\bigl( \eta(1) \bigr)^{1/(\beta -N)} \Bigr)^{(\beta -N)/\beta} \\
&\ge (1-\lambda)[\zeta_0 \rho^{-1}] \bigl( \eta(0) \bigr)^{-1/\beta}
 +\lambda[\zeta_1 \rho^{-1}] \bigl( \eta(1) \bigr)^{-1/\beta}.
\end{align*}
Therefore, $\lambda \longmapsto [\zeta_\lambda \rho^{-1}](\eta(\lambda))^{-1/\beta}$ is concave, and $(\supp(\mu),\dist_X|_{\supp(\mu)},\mu)$ is an $\RCD(0,\beta)$-space.
\end{proof}

Next we summarize some key ingredients for the proof of Theorem~\ref{thm:main}, based on Gigli's splitting theorem and an observation related to the lemma above.
We remark that, though Lemma~\ref{lm:beta} could be also extended to include the cases of $\beta=\infty$ and $\beta<-1$ as in the next proposition, we restricted ourselves to $\beta>N$ for simplicity.

\begin{proposition}\label{pr:proj}
Let $(X,\dist_X,\fm)$ be an $\RCD(0,N)$-space for some $N\in (1,\infty)$ and $\gamma\colon \R \lra X$ be a straight line.
Then, there exists an $\RCD(0,N-1)$-space $(Y,\dist_Y,\fn)$ and an isometry $T\colon (X,\dist_X) \lra (\R\times Y,d)$ such that
\begin{itemize}
    \item $d((s,y),(t,z))=\left(|s-t|^2+\dist_Y(y,z)^2\right)^{1/2}$ for all $s,t\in\R$ and $y,z\in Y;$
    \item $\Pi_{\R}(T(x))=\bb_\gamma(x)$ for all $x \in X$, where $\Pi_{\R}\colon \R \times Y \lra \R$ is the projection$;$
    \item $T\#\fm=\diff x\otimes \fn$, where $T\#\fm$ denotes the push-forward of $\fm$ by $T$ and $\diff x$ is the Lebesgue measure on $\R$.
\end{itemize}
Moreover, let $\mu=\rho\fm$ be a probability measure on $X$ for some measurable function $\rho\colon X \lra [0,\infty)$ and put $\bb_\gamma\#\mu=w\diff x$.
\begin{enumerate}[{\rm (i)}]
\item\label{key_pos}
If $\rho^{1/(\beta-N)}$ is concave on $\rho^{-1}((0,\infty))$ for some $\beta>N$, then $w^{1/(\beta-1)}$ is concave on $w^{-1}((0,\infty))$.
\item\label{key_inf}
If $\log\rho\colon X \lra \R \cup \{-\infty\}$ is concave, then $\log w$ is concave.
\item\label{key_neg}
If $\rho^{1/(\beta-N)}\colon X \lra \R \cup \{\infty\}$ is convex for some $\beta<1$, then $w^{1/(\beta-1)}$ is convex.
\end{enumerate}
\end{proposition}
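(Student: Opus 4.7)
The plan is to dispatch the splitting assertion immediately from Gigli's splitting theorem, and then to reduce the concavity/convexity statements on $w$ to a dimensional Borell--Brascamp--Lieb (BBL) inequality applied on the fiber $Y$. For the splitting, since $(X,\dist_X,\fm)$ is $\RCD(0,N)$ and contains the straight line $\gamma$, Gigli's theorem directly produces the $\RCD(0,N-1)$-space $(Y,\dist_Y,\fn)$ and the isometry $T\colon X\to\R\times Y$ with the three listed properties; in particular, the identification $\Pi_\R\circ T=\bb_\gamma$ and the product decomposition $T\#\fm=\diff x\otimes\fn$ are built into the statement of the splitting theorem. When $N\in (1,2)$, $Y$ is a singleton and the claims on $w$ below reduce to elementary one-dimensional concavity statements.

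For (i)--(iii) I would identify $X\cong\R\times Y$, write $\rho=\rho(t,y)$, and record via Fubini that
\[
w(t)=\int_Y \rho(t,y)\,\fn(\diff y).
\]
Fix $t_0,t_1\in\R$, $\lambda\in[0,1]$, and set $t_\lambda:=(1-\lambda)t_0+\lambda t_1$. The key structural fact is that in a metric product $\R\times Y$, for any $y_0,y_1\in Y$ and any geodesic $\eta\colon [0,1]\to Y$ joining them, the curve $s\mapsto(t_s,\eta(s))$ is a geodesic in $X$. Plugging such geodesics into the hypothesis on $\rho$ transports each of (i)--(iii) into a $p$-concavity/convexity inequality for the slices $\rho(t_i,\cdot)$ along arbitrary $Y$-geodesics.

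Concretely, for (i) with $p:=1/(\beta-N)>0$, the hypothesis reads
\[
\rho\bigl(t_\lambda,\eta(\lambda)\bigr)^{p}\ge(1-\lambda)\rho\bigl(t_0,y_0\bigr)^{p}+\lambda\rho\bigl(t_1,y_1\bigr)^{p},
\]
and the dimensional BBL on the $\RCD(0,N-1)$-space $(Y,\dist_Y,\fn)$ would yield
\[
w(t_\lambda)^{q}\ge(1-\lambda)w(t_0)^{q}+\lambda w(t_1)^{q},\qquad q:=\frac{p}{1+(N-1)p}=\frac{1}{\beta-1},
\]
which is exactly the desired concavity of $w^{1/(\beta-1)}$, since $q>0$ when $\beta>N>1$. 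Case (ii) is the limiting $p=0$ version, handled by the Pr\'ekopa--Leindler form of BBL and giving log-concavity of $w$. In case (iii), $p=1/(\beta-N)$ lies in $\bigl(-1/(N-1),0\bigr)$ precisely when $\beta<1$; the negative-exponent form of BBL then outputs $q=1/(\beta-1)<0$, translating to convexity of $w^{1/(\beta-1)}$.

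The main obstacle is to have in hand a dimensional BBL on the $\RCD(0,N-1)$-space $(Y,\dist_Y,\fn)$. The $p=0$ (Pr\'ekopa--Leindler) form holds whenever one has $\CD(0,\infty)$, but the sharp $N$-dependent version requires either the displacement convexity of the R\'enyi entropy $S_{N-1}$ that underlies the $\CD(0,N-1)$ condition, or a Cavalletti--Mondino localization on $Y$ reducing matters to classical one-dimensional BBL on needles. Once such a tool is available in the $\RCD$ setting, the computation of the output exponent and the sign bookkeeping in (i)--(iii) is routine.
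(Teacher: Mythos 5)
Your proposal follows essentially the same route as the paper: Gigli's splitting theorem for the first part, the product-geodesic observation to transfer the hypothesis on $\rho$ to the slices, and a dimensional Borell--Brascamp--Lieb inequality on $(Y,\dist_Y,\fn)$ to derive the concavity/convexity of $w$, with the exponent arithmetic $q = p/(1+(N-1)p) = 1/(\beta-1)$ carried out exactly as in the paper. The one point you flag as an ``obstacle'' --- whether a dimensional BBL$(0,N-1)$ inequality is available on the $\RCD(0,N-1)$-space $(Y,\dist_Y,\fn)$ --- is in fact already settled in the literature and is precisely what the paper cites: Bacher--Sturm established BBL$(K,N)$ for non-branching $\CD(K,N)$-spaces (\cite[Theorem~3.1]{bacher2010borell}), and $\RCD$-spaces are non-branching by Deng \cite{deng2020}, so the required inequality applies directly without any need for localization or a separate displacement-convexity argument. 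Once that reference is supplied, your argument is complete and matches the paper's.
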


\begin{proof}
The first part of the proposition comes from \cite[Theorem~1.4]{gsplit}.
Then, using the product structure $X=\R \times Y$, $w$ is explicitly given by
\[ w(t) =\int_Y \rho(t,y) \,\fn(\diff y). \]

\eqref{key_pos}
Though we give a direct proof, one can also reduce the concavity of $w^{1/(\beta -1)}$ to Lemma~\ref{lm:beta}, for it is equivalent to $\CD(0,\beta)$ of the interval $(\supp(w),|\cdot|,w \diff x)$ (see the beginning of Section~\ref{sc:1d}).
Take $a<b$ and $\lambda \in (0,1)$, and define functions $f,g,h \colon Y \lra \R$ by
\[
h(y) := \rho\bigl( (1-\lambda)a +\lambda b,y \bigr), \qquad
f(y) := \rho(a,y), \qquad
g(y) := \rho(b,y).
\]
For any minimal geodesic $\eta\colon [0,1] \lra Y$ in $(Y,\dist_Y)$, observe that the curve
\[
\lambda \,\longmapsto\, \bigl( (1-\lambda)a +\lambda b,\eta(\lambda) \bigr) \in \R \times Y =X
\]
is also a minimal geodesic.
Thus, the assumed concavity of $\rho^{1/(\beta-N)}$ implies
\[
 h\bigl( \eta(\lambda) \bigr)^{1/(\beta-N)} \ge (1-\lambda) f\bigl( \eta(0) \bigr)^{1/(\beta-N)} +\lambda g\bigl( \eta(1) \bigr)^{1/(\beta-N)}.
\]
Now, in order to apply the Borell--Brascamp--Lieb inequality on $(Y,\dist_Y,\fn)$ in \cite[Theorem 3.1]{bacher2010borell}, one needs to check that $(Y,\dist_Y,\fn)$ is non-branching, which is the case thanks to \cite{deng2020}.
Hence, we obtain from the Borell--Brascamp--Lieb inequality $\mathrm{BBL}(0,N-1)$ with parameter $p=1/(\beta -N)>0$ (as in \cite[Definition~1.1]{bacher2010borell}) that, since $p/(1+(N-1)p)=1/(\beta -1)$,
\[
\int_Y h \diff\fn
    \ge \Biggl( (1-\lambda) \biggl( \int_Y f \diff\fn \biggr)^{1/(\beta -1)} +\lambda \biggl( \int_Y g \diff\fn \biggr)^{1/(\beta -1)} \Biggr)^{\beta -1}.
\]
This yields the concavity of $w^{1/(\beta -1)}$.

\eqref{key_inf}
We similarly find
\[
 \log h\bigl( \eta(\lambda) \bigr) \ge (1-\lambda) \log f\bigl( \eta(0) \bigr) +\lambda \log g\bigl( \eta(1) \bigr).
\]
Then, $\mathrm{BBL}(0,N-1)$ with $p=0$ (in other words, the Pr\'ekopa--Leindler inequality) shows the claim
\[
\int_Y h \diff\fn \ge \biggl( \int_Y f \diff\fn \biggr)^{1-\lambda} \biggl( \int_Y g \diff\fn \biggr)^\lambda.
\]

\eqref{key_neg}
In this case, we have
\[
 h\bigl( \eta(\lambda) \bigr) \ge \Bigl( (1-\lambda) f\bigl( \eta(0) \bigr)^{1/(\beta-N)} +\lambda g\bigl( \eta(1) \bigr)^{1/(\beta-N)} \Bigr)^{\beta -N},
\]
then the claim follows from $\mathrm{BBL}(0,N-1)$ with $p=1/(\beta -N) >-1/(N-1)$.
\end{proof}

\subsection{Euclidean case}

In this subsection, we describe our results in the Euclidean setting.
We first remark that the curvature-dimension condition is intimately related to Borell's $s$-concavity \cite{Borell}.
For $s \in \R \cup \{\pm\infty\}$, $\lambda \in (0,1)$ and $a,b\geq 0$, the \emph{$s$-mean} is defined by
\[
\mathcal{M}_s(a,b;\lambda) :=\begin{cases}
\bigl( (1-\lambda)a^s +\lambda b^s \bigr)^{1/s} &\text{if } s>0, \text{ or if } s<0 \text{ and } ab>0 ,\\
0 &\text{if } s<0 \text{ and } ab=0, \\
a^{1-\lambda} b^\lambda &\text{if } s=0, \\
\min\{ a,b \} &\text{if } s=-\infty, \\
\max\{ a,b \} &\text{if } s=\infty.
\end{cases}
\]
We say that a positive Radon measure $\mu$ on $\R^n$ is \emph{$s$-concave} if, for any Borel measurable sets $A,B \subset \R^n$ and $\lambda \in (0,1)$,
\[
\mu\bigl( (1-\lambda)A+\lambda B \bigr)
\ge \mathcal{M}_s \bigl( \mu(A),\mu(B) ;\lambda \bigr),
\]
where $(1-\lambda)A+\lambda B:=\{ (1-\lambda)x+\lambda y \mid x \in A,\, y \in B \}$ is the Minkowski sum.
Then, Borell's classical result \cite[Theorem~3.2]{Borell} shows the following.

\begin{theorem}[Borell's theorem]\label{th:Borell}
\begin{itemize}
    \item If $s>1/n$, then there does not exist any $s$-concave measure on $\R^n$.
    \item If $s=1/n$, then the only $s$-concave measures on $\R^n$ are constant multiplications of the $n$-dimensional Lebesgue measure $\mathcal{L}^n$.
    \item For $s \in (0,1/n)$, a measure $\mu$ on $\R^n$ is $s$-concave if and only if it has a density function $\rho$ with respect to $\mathcal{L}^n$ and $\rho^p$ is concave with $p=s/(1-sn)$.
    \item For $s\in [-\infty,0]$, a measure $\mu$ on $\R^n$ is $s$-concave if and only if it has a density function $w$ with respect to the Lebesgue measure on an affine subspace $V$ and $w$ satisfies
    \begin{equation}\label{eq:s-con}
    w\bigl( (1-\lambda) x +\lambda y \bigr)
    \ge \mathcal{M}_q\bigl( w(x),w(y);\lambda \bigr)
    \end{equation}
    for all $x,y \in V$ and $\lambda \in (0,1)$, where $q:=s/(1-s\dim V)$ $($which should be understood as $-1/\dim V$ if $s=-\infty)$.
\end{itemize}
\end{theorem}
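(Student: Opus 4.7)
The plan is to prove Borell's theorem by reducing $s$-concavity of a measure to pointwise $q$-concavity of its density through the Borell--Brascamp--Lieb (BBL) inequality, which is the functional counterpart of $s$-concavity. The exponent relationship $q=s/(1-sk)$, where $k$ is the dimension of the affine hull of $\supp(\mu)$, reflects the $1/k$-concavity of Lebesgue measure itself, i.e., the Brunn--Minkowski inequality, which will do the bulk of the structural work.

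First I would establish the structural properties of a nonzero $s$-concave Radon measure $\mu$. The set $\supp(\mu)$ is convex: applying the defining inequality to $A=B(x,\varepsilon)$ and $B=B(y,\varepsilon)$ gives $\mu(B((1-\lambda)x+\lambda y,\varepsilon))>0$ for every $\varepsilon>0$. Let $V$ denote the affine hull of $\supp(\mu)$ and $k:=\dim V$; translating, I view $V$ as $\R^k$. Combining $s$-concavity with the classical Brunn--Minkowski inequality $\mathcal{L}^k(E+\varepsilon B_0)^{1/k}\le \mathcal{L}^k(E)^{1/k}+\varepsilon\mathcal{L}^k(B_0)^{1/k}$, applied with a reference ball $B_0\subset\supp(\mu)$ of positive $\mu$-mass, shows that $\mu$ assigns zero measure to Lebesgue-null subsets of the relative interior of $\supp(\mu)$. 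Hence $\mu=w\,\mathcal{L}^k$ on $V$ for some Borel density $w$. Testing $s$-concavity on concentric balls $B(x,a)$, $B(x,b)$ around an interior point and comparing with the polynomial scaling of $\mathcal{L}^k$ produces the sharp bound $s\le 1/k$. For $k=n$ this excludes $s>1/n$ (bullet one), and for $s=1/n$ the equality case of Brunn--Minkowski forces $w$ to be constant on the open support, yielding bullet two.

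For the remaining cases I would apply $s$-concavity to the pair $A=B(x_0,r)\cap V$, $B=B(x_1,r)\cap V$ centered at Lebesgue points $x_0,x_1$ of $w$. Dividing by the common volume $\omega_k r^k$ and letting $r\downarrow 0$ via the Lebesgue differentiation theorem turns the left-hand side into $w((1-\lambda)x_0+\lambda x_1)$ and the right-hand side into $\mathcal{M}_q(w(x_0),w(x_1);\lambda)$ with $q=s/(1-sk)$, using the convention $q=-1/k$ when $s=-\infty$. This establishes the ``only if'' direction of bullets three and four. The converse is a direct application of the BBL inequality $\mathrm{BBL}(0,k)$ with parameter $p=s/(1-sk)$, as recorded in \cite[Theorem 3.1]{bacher2010borell}: given $q$-concavity of $w$ on $V$, one applies BBL to $f=w\chi_A$, $g=w\chi_B$, $h=w\chi_{(1-\lambda)A+\lambda B}$ to obtain
\[
\mu\bigl((1-\lambda)A+\lambda B\bigr)=\int_{(1-\lambda)A+\lambda B} w\diff\mathcal{L}^k\ge \mathcal{M}_s\bigl(\mu(A),\mu(B);\lambda\bigr),
\]
which is precisely $s$-concavity of $\mu$.

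The main obstacle is the absolute continuity step: ruling out that $\mu$ concentrates on Lebesgue-null subsets of its affine hull without any a priori density information. The Brunn--Minkowski comparison must be carried out uniformly on compact subsets of the relative interior of $\supp(\mu)$, which requires quantifying how the $s$-mean interacts with the polynomial Lebesgue scaling $\varepsilon\mapsto\mathcal{L}^k(E+\varepsilon B_0)$ as $\varepsilon\downarrow 0$. Once that absolute continuity is secured, both the passage from measure-theoretic $s$-concavity to pointwise $q$-concavity via Lebesgue differentiation, and the reverse passage via BBL, are routine.
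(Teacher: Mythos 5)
The paper does not prove this result: it is quoted verbatim as Borell's classical theorem with a citation to \cite[Theorem~3.2]{Borell}, and no argument is given. Your proposal therefore cannot be compared against a proof in the paper; what you have written is a blind reconstruction of Borell's original argument, and its overall architecture (support convexity, absolute continuity via Brunn--Minkowski, Lebesgue differentiation down to density concavity, BBL for the converse) is indeed the standard route. Since the paper offers no proof, I will assess the proposal on its own terms.

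There are three concrete gaps. First, for the bullets with $s>0$ your argument tacitly uses $k=n$: you write ``for $k=n$ this excludes $s>1/n$'' and later substitute $k=n$ into $q=s/(1-sk)$ to match the stated exponent $s/(1-sn)$. But nothing in your sketch rules out $k<n$ when $s>0$. This needs a separate (easy) argument: if $s>0$, then $\mathcal{M}_s(\mu(A),0;\lambda)=(1-\lambda)^{1/s}\mu(A)>0$ whenever $\mu(A)>0$, so choosing $B$ to be a singleton far from $V$ and $\lambda$ close to $1$ forces $(1-\lambda)A+\lambda B$ to miss $V$ entirely, yielding a contradiction unless $V=\R^n$. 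Second, the Lebesgue differentiation step gives the pointwise bound only at Lebesgue points of $w$, hence only $\mathcal{L}^k$-almost everywhere; the statement requires the $q$-concavity inequality for \emph{all} $x,y\in V$. To close this one must replace $w$ by a canonical representative (for instance the upper density $z\mapsto\limsup_{r\downarrow 0}\mu(B(z,r))/\omega_k r^k$, or the upper semicontinuous envelope) and check that this modification does not change $\mu$ and does make the inequality hold everywhere on the relative interior of $\supp(\mu)$; this is a known but nontrivial technical point in Borell's proof. Third, as you yourself acknowledge, the absolute-continuity step is only sketched. That is the genuinely delicate part of Borell's theorem, and your one-line Brunn--Minkowski heuristic would need to be expanded into a quantitative argument showing that for any compact Lebesgue-null $E$ in the relative interior of the support, $\mu(E)=0$; the way $\mathcal{M}_s$ interacts with the $k$-th power scaling of $\mathcal{L}^k(E+\varepsilon B_0)$ as $\varepsilon\downarrow 0$ must be made explicit and checked against the sign of $s$. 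Finally, when invoking BBL for the converse you should note that the pointwise hypothesis is only needed when $f(x)g(y)>0$; with the reference you cite this is the convention, but since the paper's definition of $\mathcal{M}_s$ assigns a positive value to $\mathcal{M}_s(a,0;\lambda)$ when $s>0$, it is worth stating explicitly that the $s$-concavity inequality for $\mu$ is being verified in the regime $\mu(A)\mu(B)>0$, and separately arguing (or observing it follows) for the degenerate case.
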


We remark that $\supp(\mu)=\R^n$ when $s \in (0,1/n]$, while $\supp(\mu)$ is a convex set in $V$ for $s \in [-\infty,0]$.

\begin{corollary}
    Let $s\in [-\infty,1/n]$ and $\mu$ be an $s$-concave measure on $\R^n$ with $\supp(\mu)=\R^n$.
    Then, for any affine subspace $V \subset \R^n$ and the orthogonal projection $\Pi\colon \R^n \lra V$,
    $\Pi\#\mu$ has a density function $w$ with respect to the Lebesgue measure on $V$, which satisfies \eqref{eq:s-con}. 
\end{corollary}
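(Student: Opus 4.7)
The plan is to derive this as a direct slicing consequence of the Euclidean Borell--Brascamp--Lieb (BBL) inequality, in the same spirit as the proof of Proposition~\ref{pr:proj}\eqref{key_pos}--\eqref{key_neg}. After translating so that $V$ contains the origin, I will split $\R^n = V \oplus V^\perp$ and set $k := \dim V^\perp = n -\dim V$. Since $\supp(\mu) = \R^n$, Theorem~\ref{th:Borell} supplies a density $\rho$ of $\mu$ with respect to $\mathcal{L}^n$ which is $p$-concave in the $s$-mean sense with $p := s/(1-sn)$ (with the convention $p = -1/n$ at $s = -\infty$); the boundary case $s = 1/n$ is degenerate, since then $\rho$ is constant and $\Pi\#\mu$ fails to be locally finite on $V$, so I tacitly restrict to $s < 1/n$.

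By Fubini, the pushforward admits the density
\[
w(x) = \int_{V^\perp} \rho(x + z)\,dz, \qquad x \in V,
\]
with respect to the Lebesgue measure on $V$. Fix $x_0, x_1 \in V$ and $\lambda \in (0,1)$, and set $f_i(z) := \rho(x_i + z)$ ($i = 0, 1$) and $h(z) := \rho((1-\lambda)x_0 + \lambda x_1 + z)$. Because $(1-\lambda)(x_0 + z_0) + \lambda(x_1 + z_1)$ decomposes as $[(1-\lambda)x_0 + \lambda x_1] + [(1-\lambda)z_0 + \lambda z_1]$, the $p$-concavity of $\rho$ yields the pointwise bound
\[
h\bigl( (1-\lambda)z_0 + \lambda z_1 \bigr) \ge \mathcal{M}_p\bigl( f_0(z_0), f_1(z_1); \lambda \bigr)
\]
for all $z_0, z_1 \in V^\perp$. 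Applying BBL on $V^\perp \cong \R^k$ converts this pointwise bound into
\[
w\bigl( (1-\lambda) x_0 + \lambda x_1 \bigr) \ge \mathcal{M}_{p/(1+kp)}\bigl( w(x_0), w(x_1); \lambda \bigr),
\]
and an elementary computation confirms
\[
\frac{p}{1+kp} = \frac{s/(1-sn)}{1 + (n - \dim V)\, s/(1-sn)} = \frac{s}{1-s\dim V} = q,
\]
which is exactly \eqref{eq:s-con}.

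The only substantive thing left to check is that BBL is applicable on $\R^k$, i.e.\ $p \ge -1/k$. This is automatic for $s \in [0, 1/n)$ since then $p \ge 0$; for $s \in [-\infty, 0)$ one has $p \in [-1/n, 0)$, and $-1/n \ge -1/k$ because $k \le n$. The case $s = 0$ reduces to the Pr\'ekopa--Leindler inequality and $s = -\infty$ to Brunn--Minkowski, both of which are encompassed by BBL (see \cite{bacher2010borell} for the general statement). I do not expect any genuine obstacle beyond these bookkeeping checks and the arithmetic identification of $q$.
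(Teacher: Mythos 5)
Your argument is correct, but it takes a more hands-on route than the paper. The paper's proof is a one-liner: since $\Pi$ is linear, $\Pi\#\mu$ is again $s$-concave (the Minkowski combination pushes forward under $\Pi$), and its support has affine span $V$ because $\supp(\mu)=\R^n$; then one applies Theorem~\ref{th:Borell} directly to the measure $\Pi\#\mu$ on $V$, which supplies both the existence of the density $w$ and the inequality \eqref{eq:s-con} in one stroke. You instead apply Theorem~\ref{th:Borell} to $\mu$ itself to obtain a $p$-concave density $\rho$ with $p=s/(1-sn)$, express $w$ by Fubini as the fiber integral over $V^\perp$, and then invoke the Euclidean Borell--Brascamp--Lieb inequality on $\R^{k}$, $k=n-\dim V$, to transfer the $p$-concavity of $\rho$ to the $q$-concavity of $w$ with $q=p/(1+kp)=s/(1-s\dim V)$. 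Your arithmetic and the check that $p\ge -1/k$ are both correct. In effect you are re-deriving, for densities, the fact that linear pushforwards preserve $s$-concavity, which is exactly the lemma the paper cites implicitly; so the two approaches are equivalent in substance, with yours being more explicit and self-contained (and mirroring the strategy of Proposition~\ref{pr:proj}), while the paper's is shorter because it stays at the level of measures. Your caveat about $s=1/n$ is legitimate: with $\supp(\mu)=\R^n$ and $s\in(0,1/n]$, Borell's theorem forces $\rho^p$ to be a nonnegative concave function on all of $\R^n$, hence constant, so $\mu$ is a multiple of Lebesgue and $\Pi\#\mu$ fails to be locally finite when $V\subsetneq\R^n$; the paper silently glosses over this degenerate endpoint.
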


\begin{proof}
    It is readily seen that $\Pi\#\mu$ is $s$-concave,
    then we apply Theorem~\ref{th:Borell}.
\end{proof}

In particular, if $\dim V=1$, then $w$ satisfies \eqref{eq:s-con} with $q=s/(1-s)$, which corresponds to Proposition~\ref{pr:proj} in the Euclidean case (with $s=1/\beta$).

In view of Theorem~\ref{th:Borell},
as a particular case of Theorem~\ref{thm:main}, we obtain the following result.
Note that $s>-1$ since $\beta<-1$.

\begin{theorem} \label{thm:Eucl}
Let $s\in (-1,1/n]$ and $\mu \in \mathcal{P}^1(\R^n)$ be $s$-concave in its support.
Then, every closed halfspace $H$ containing the barycenter of $\mu$ satisfies $\mu(H) \geq (1/(1+s))^{1/s}$, understood as $\mu(H) \ge \e^{-1}$ if $s=0$.
\end{theorem}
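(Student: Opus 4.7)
The plan is to reduce Theorem~\ref{thm:Eucl} to Theorem~\ref{thm:main} (together with its borderline Corollary~\ref{cr:main}) by viewing $\R^n$ as the $\RCD(0,n)$-space $(\R^n,|\cdot|,\mathcal{L}^n)$ and identifying every closed halfspace with a sublevel set of a Busemann function. For any unit vector $v \in \R^n$, the line $\gamma_v(t):=tv$ is straight and, as already noted after \eqref{eq:Buse}, satisfies $\bb_{\gamma_v}(x)=\langle v,x\rangle$. Every closed halfspace of $\R^n$ thus has the form $H = \{\bb_{\gamma_v} \le c\}$ (or $\{\bb_{\gamma_v} \ge c\}$ after reversing $v$), and $x_0 \in H$ forces $c \ge \bb_{\gamma_v}(x_0)$, so $H \supset \{\bb_{\gamma_v}\le \bb_{\gamma_v}(x_0)\}$. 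It therefore suffices to lower-bound the mass of this sublevel set, which is exactly the conclusion of Theorem~\ref{thm:main} for the barycenter $x_0$ of $\mu$.

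The second step is to install the parameter dictionary $\beta=1/s$, $N=n$ that translates Borell's $s$-concavity (Theorem~\ref{th:Borell}) into the density hypotheses of Theorem~\ref{thm:main}. For $s \in (0,1/n)$, the density $\rho$ of $\mu$ satisfies $\rho^{p}$ concave on its support with $p=s/(1-sn) = 1/(\beta-n)$ and $\beta>n$, matching case \eqref{main_pos}; for $s=0$, $\log\rho$ is concave, matching \eqref{main_inf}; and for $s \in (-1,0)$, Borell's $q$-mean inequality with $q=s/(1-sn)<0$ is equivalent to convexity of $\rho^{1/(\beta-n)}$ with $\beta=1/s<-1$, matching \eqref{main_neg}. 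A direct rearrangement then identifies the bound $(\beta/(\beta+1))^\beta$ of Theorem~\ref{thm:main} with $(1/(1+s))^{1/s}$, and the limit $s \to 0$ recovers $\e^{-1}$.

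The borderline value $s=1/n$ lies outside this substitution (since $1-sn=0$), but there the $s$-concavity condition forces $\mu$ to be a positive multiple of $\mathcal{L}^n$ on a bounded convex support, i.e.\ the uniform distribution on a convex body, and Corollary~\ref{cr:main} with $N=n$ directly gives the bound $(n/(n+1))^n=(1/(1+s))^{1/s}$. I do not expect a serious obstacle here: the remaining bookkeeping is to verify the hypotheses of Theorem~\ref{thm:main} in each regime, with $\mu \in \mathcal{P}^1(\R^n)$ given by assumption and, for $s \in (0,1/n)$, Remark~\ref{rm:main} (via Lemmas~\ref{lm:beta} and \ref{lm:bdd}) ensuring that $\supp(\mu)$ is automatically bounded. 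The only mildly delicate point is the sign bookkeeping of $\beta-n$ across the three regimes, so as to match the concavity/convexity hypotheses of Theorem~\ref{thm:main} correctly.
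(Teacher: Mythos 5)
Your proposal is correct and matches the paper's intended derivation, which the authors only sketch in one line (``In view of Theorem~\ref{th:Borell}, as a particular case of Theorem~\ref{thm:main}, we obtain the following result. Note that $s>-1$ since $\beta<-1$.''). The substitution $\beta=1/s$, $N=n$ is exactly right, the translation of Borell's $q$-concavity (with $q=s/(1-sn)=1/(\beta-n)$) into the three density hypotheses of Theorem~\ref{thm:main} is correct in all three sign regimes, and your handling of the borderline case $s=1/n$ via Corollary~\ref{cr:main} (equivalently, letting $\beta\downarrow n$ in case~\eqref{main_pos}) is what the paper does.
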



Classical Grünbaum's inequality is recovered by Theorem~\ref{thm:Eucl} with $s=1/n$.
Moreover, as mentioned in the introduction, the case of log-concave distributions ($s=0$) was also known.

\begin{remark}\label{rm:BM}
The $s$-concavity is generalized to the \emph{Brunn--Minkowski inequality} $\mathrm{BM}(0,N)$, with $N=1/s$.
Precisely, the curvature-dimension condition $\CD(0,N)$ implies $\mathrm{BM}(0,N)$ (see \cite[Proposition~2.1]{SturmII} for $N \in (1,\infty)$, \cite[Theorem~30.7]{Villani} for $N=\infty$, \cite[Theorem~4.8]{Oneg} for $N<0$,  and \cite[(3.9)]{Oneedle} for $N=0$).
Moreover, for weighted Riemannian manifolds and $N\in (1,\infty)$, the converse implication can be also found in \cite[Theorem~1.1]{magnabosco2024brunn}.
\end{remark}

\if0
\section{On $s$-concave distributions} \label{Sec:Euclidean}

Let $s\in\R\cup\{-\infty,\infty\}$. For all $t\in (0,1)$ and $a,b\geq 0$, set $$M_s(a,b,t)=\begin{cases} \left((1-t)a^s+tb^s\right)^{1/s} \textrm{ if } s\in \R\setminus\{0\} \textrm{ and } a>0 \textrm{ or } b>0 \\ 0 \textrm{ if } s\in \R\setminus\{0\} \textrm{ and } a=b=0 \\ a^{1-t}b^t \textrm{ if } s=0 \\ \min(a,b) \textrm{ if } s=-\infty \\ \max(a,b) \textrm{ if } s=\infty. \end{cases}$$

Let $(X,\dist_X,\fm)$ be metric measure space. We say that $\fm$ is $s$-concave if and only if for all Borel sets $A_0,A_1$ with $\fm(A_0)>0, \fm(A_1)>0$ and all $t\in [0,1]$, it holds that 
$$\fm(A_t)\geq M_s(\fm(A_0),\fm(A_1),t)$$ 
where $A_t$ is the set of all points of the form $\gamma(t)$ for some constant speed geodesic $\gamma:[0,1]\to X$ with $\gamma(0)\in A_0$ and $\gamma(1)\in A_1$ (since $A_t$ may not be measurable in general, the left hand side should be understood as the outer measure of $\fm$ evaluated at $A_t$). A well-known fact is that if $(X,\dist_X,\fm)$ is a $\CD(0,N)$-space with $N\in [1,\infty]$, then $\fm$ is $s$-concave with $s=1/N$ ($s=0$ is $N=\infty$) \cite[Proposition 2.1]{SturmII}. \veb{Can we drop the assumption $\fm(A_0)\fm(A_1)>0$ if $\fm$ has full support?}. In other words, a $\CD(0,N)$-space satisfies a Brunn-Minkowski inequality. In fact, the converse is true in the following sense. Let $(X,\dist_X)$ be a Riemannian manifold (so $\dist_X$ is the Riemannian distance) and $\fm$ be a probability measure on $M$ with a $C^2$ density with respect to the Riemannian volume. Assume that $\fm$ satisfies the following version of Brun-Minkowski inequality: There is some $s\geq 0$ such that for all Borel subsets $A_0,A_1\subset X$ with $\fm(A_0),\fm(A_1)>0$, and for all $t\in [0,1]$,
$$\fm(A_t)\geq M_s(\fm(A_0),\fm(A_1),t)$$ 
where $A_t$ is defined as above. Then, $(X,\dist_X,\fm)$ is a $\CD(0,N)$-space with $N=1/s$ if $s>0$, $N=\infty$ if $s=0$. In other words, $(X,\dist_X,\fm)$ is $\CD(0,N)$ ($N\in (1,\infty]$) if and only if $\fm$ is $1/N$-concave. 
A more general statement is available and proved in \cite{magnabosco2024brunn} (see Theorem 1). 

Here, to illustrate our results, we focus on the case of Euclidean and discuss their relationships with $s$-concave distributions. First, we recall Borell-Brascamp-Lieb inequalities. Here, we fix a positive integer $n$ and work in the Euclidean space $\R^n$.

\begin{proposition}[Borell-Brascamp-Lieb inequalities, \cite{Borell,BL}] \label{Prop:BBL}
    Let $s\in [-1/n,\infty]$, $f,g,h:\R^n\to\R$ be non-negative measurable functions that are integrable with respect to the Lebesgue measure on $\R^n$. Let $t\in (0,1)$ be such that $h((1-t)x+ty)\geq M_s(f(x),g(y),t)$ for all $x,y\in \R^n$. Then, 
    $$\int_{\R^n} h(z)\diff z \geq M_{s/(1+sn)}\left(\int_{\R^n}f(x)\diff x,\int_{\R^n}g(x)\diff y ,t\right)$$
    where $s/(1+sn)$ should be understood as $-\infty$ if $s=-1/n$ and as $1/n$ if $s=\pm\infty$. 
\end{proposition}

When $s=0$, one recovers Prekopa-Leindler's inequality, which implies the standard Brunn-Minkowski's inequality, for Euclidean volume. As an important, yet direct consequence of Proposition~\ref{Prop:BBL}, is the fact that $s$-concavity of probability measures is preserved, in some sense, by marginalization. First, let us recall an important result of Borell's that implies that all $s$-concave probability distributions (if any) must be absolutely continuous with respect to the Lebesgue measure on the affine spans of their supports. In the following result, given $p\in $, we call a non-negative function $f:\R^n\to\R$ $p$-concave if and only if $f((1-t)x+ty)\geq M_p(f(x),f(y),t)$ for all $t\in (0,1)$ and for all $x,y$ in the support of $f$ (which, as a consequence, must be convex). 

\begin{proposition}\cite[Theorem 3.2]{Borell} \label{Prop:Borell}
Let $s\in \R\cup \{-\infty,\infty\}$. 
\begin{itemize}
    \item If $s>1/n$, there does not exist a probability measure that is $s$-concave on $\R^n$;
    \item If $s=1/n$, the only $s$-concave probability measures on $\R^n$ are the uniform distributions on full-dimensional convex bodies.
    \item If $s\leq 1/n$, any $s$-concave probability measure on $\R^n$ must have a density with respect to the Lebesgue measure on the affine span on its support. Moreover, this density must be $(s/(1-sn))$-concave, where $s/(1-sn)$ should be understood as $-1/n$ if $s=-\infty$.
\end{itemize}
\end{proposition}

\begin{corollary}
    Let $s\in [-\infty,1/n]$ and let $\mu$ be an $s$-concave probability measure on $\R^n$. Assume that the support of $\mu$ is full dimensional. Let $\pi$ be an orthogonal projection in $\R^n$ of rank $r\geq 1$, with range $G\subset \R^n$. Then, $\pi\#\mu$ has a density with respect to the Lebesgue measure on $H$, which is $s/(1-rs)$-concave. 
\end{corollary}

\begin{proof}
    It is easy to see that $\pi\#\mu$ is $s$-concave, by linearity of $\pi$. Moreover, the affine span of its support is $H$, which has dimension $r$. The result follows from Proposition \ref{Prop:Borell}.
\end{proof}

In particular, if $r=1$, $\pi\#\mu$ is simply a one-dimensional marginal of $\mu$, so it has an $s/(1-s)$-concave density with respect to the Lebesgue measure on $\R$, which recovers the result of Theorem above, in Euclidean case. As a particular case of Theorem~\ref{thm:main}, we obtain the following result.

\begin{theorem}
    Let $\mu$ be an $s$-concave probability distribution on $\R^n$, for some $s\in [0,1/n]$. Then, every closed halfspace $H$ containing the barycenter of $\mu$ satisfies $\mu(H)\geq \left(\frac{1}{1+s}\right)^{1/s}$, where the latter is to be understood as $1/e$ if $s=0$. 
\end{theorem}
\fi

\section{One-dimensional analysis}\label{sc:1d}

In this section, as a key step for the proof of Theorem~\ref{thm:main}, we first state Gr\"{u}nbaum's inequality in the case of one-dimensional $\CD(0,N)$-spaces.

\subsection{One-dimensional $\CD(0,N)$-spaces}\label{ssc:1dCD}

We consider a one-dimensional space $((a,b),|\cdot|,\e^{-\psi}\diff x)$, where $-\infty \le a<b \le \infty$, $|\cdot|$ denotes the absolute value giving rise to the canonical distance on $\R$, $\diff x$ is the Lebesgue measure on $\R$, and $\psi \colon (a,b) \lra \R$ is a continuous function.
In this case, being a $\CD(0,N)$-space is equivalent to
\begin{equation}\label{eq:CD0N}
\psi'' -\frac{(\psi')^2}{N-1} \ge 0
\end{equation}
in the weak sense (recall the definition of $\Ric_N$ from \eqref{eq:Ric_N}).
By setting $w=\e^{-\psi}$, this is equivalent to the concavity of $w^{1/(N-1)}$ if $N \in (1,\infty)$, the concavity of $\log w$ if $N=\infty$, and to the convexity of $w^{1/(N-1)}$ if $N \in (-\infty,1)$.

When we assume that $\mu=w \diff x$ is a probability measure, on the one hand, for $N \in (1,\infty)$, observe from the concavity of $w^{1/(N-1)}$ that $\mu$ can only be supported on some bounded interval, thereby $a>-\infty$ and $b<\infty$.
On the other hand, for $N=\infty$ (resp.\ $N \in (-\infty,1)$), the concavity of $\log w$ (resp.\ convexity of $w^{1/(N-1)}$) does not imply such boundedness, by letting $\log w=-\infty$ (resp.\ $w^{1/(N-1)}=\infty$) outside the support of $\mu$.

For $N \in (1,\infty)$, the space $([0,\infty),|\cdot|,Nx^{N-1} \diff x)$ is a model $\CD(0,N)$-space, that is, it enjoys equality in \eqref{eq:CD0N}.
Moreover, normalized as a probability space, 
\begin{equation}\label{eq:model}
([0,1],|\cdot|,Nx^{N-1} \diff x)
\end{equation}
is a model space that also reaches the equality case in Gr\"unbaum's inequality.
Indeed, its barycenter is given by
\[
\int_0^1 x \cdot Nx^{N-1} \diff x = \frac{N}{N+1},
\]
and we have
\[
\int_0^{N/(N+1)} Nx^{N-1} \diff x = \left(\frac{N}{N+1} \right)^N.
\]
In the original Gr\"unbaum's inequality, this model corresponds to the projection of the uniform distribution over a finite cone along its axis.


\subsection{Case of $N \in (1,\infty)$}\label{ssc:1d_pos}

We first consider the case of $N \in (1,\infty)$.

\begin{lemma}[Gr\"unbaum's inequality on intervals; $N>1$]\label{lm:Grunbaum1d}
Let $a,b\in\R$ with $a<0<b$ and $w\colon (a,b) \lra [0,\infty)$ be a nonnegative function such that $w^{1/(N-1)}$ is concave for some $N>1$ and
\[
\int_a^b w(x) \diff x=1, \qquad \int_a^b xw(x)\diff x = 0.
\]
Then, we have
\begin{equation}\label{eq:1d_pos}
\int_a^0 w(x) \diff x \geq \biggl( \frac{N}{N+1} \biggr)^N, \qquad
\int_0^b w(x) \diff x \geq \biggl( \frac{N}{N+1} \biggr)^N.
\end{equation}
\end{lemma}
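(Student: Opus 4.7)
By the symmetry $x \mapsto -x$, which preserves both the concavity of $w^{1/(N-1)}$ and the barycenter condition $\int_a^b xw\diff x = 0$, it suffices to establish a single bound, say $\int_0^b w\diff x \geq (N/(N+1))^N$; the other inequality will then follow by applying the same argument to the reflected density $w(-\cdot)$ on $(-b,-a)$.

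The overall strategy will be a reduction to the extremal affine cone configuration, namely to show that among all admissible $w$ the minimum of $\int_0^b w\diff x$ is attained when $W := w^{1/(N-1)}$ is affine on its support. For such an affine profile, $w$ coincides up to translation with the model \eqref{eq:model} normalized to have barycenter $0$, i.e., $w(x) = N(x + N/(N+1))^{N-1}$ on $[-N/(N+1), 1/(N+1)]$, for which the calculation at the end of Section~\ref{ssc:1dCD} gives equality in \eqref{eq:1d_pos}.

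The reduction to the affine cone proceeds by moment-matched comparison. I would introduce an affine cone $L(x) := \alpha(\delta - x)_+$ and consider the associated cone density $L^{N-1}$ on $[0, \delta]$, choosing the parameters $\alpha, \delta > 0$ so that
\[
\int_0^\delta L^{N-1}\diff x = \int_0^b w\diff x, \qquad \int_0^\delta x L^{N-1}\diff x = \int_0^b xw\diff x.
\]
I would then exploit the concavity of $W$ to infer $L(0) \leq W(0)$, reflecting the fact that a concave $W$ distributes mass more evenly than an affine one with the same total mass and first moment. Performing the analogous construction on $[a, 0]$ and gluing the two cones along the matching boundary value at $0$, the problem reduces to the one-parameter family of affine profiles for which the bound $\int_0^\delta L^{N-1}\diff x \geq (N/(N+1))^N$ is a direct computation using the unit-mass and zero-barycenter constraints.

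The main obstacle will be the analytic heart of the argument, namely the assertion that among concave $W \geq 0$ with prescribed mass and first moment on $[0, b]$, the affine profile minimizes the boundary value at $0$. An alternative, more functional route is to invoke the one-dimensional Borell--Brascamp--Lieb inequality $\mathrm{BBL}(0, N)$ with parameter $1/(N-1)$ applied to a suitable pair of densities derived from $w$ and its reflection across $0$, yielding a Brunn--Minkowski-type comparison that collapses onto the model cone in the extremal case. Either route ultimately reduces the inequality to the explicit calculation for the model \eqref{eq:model} already carried out in Section~\ref{ssc:1dCD}.
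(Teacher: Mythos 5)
Your reduction by symmetry and your identification of the affine cone \eqref{eq:model} as the extremal configuration are both correct, and the general strategy (reduce to a one--dimensional extremal problem and compare with the cone) is in the right spirit. However, the analytic heart of your argument is not carried out, and the sketched version has genuine gaps that I do not see how to close along the lines you propose.

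First, the moment--matched comparison is left as an assertion. You introduce $L(x)=\alpha(\delta-x)_+$ with $\int_0^\delta L^{N-1}=\int_0^b w$ and $\int_0^\delta xL^{N-1}=\int_0^b xw$ and claim $L(0)\le W(0)$, but you do not prove it, and in fact it is unclear how such a pointwise comparison at $0$ would be used: you have already forced $\int_0^\delta L^{N-1}=\int_0^b w$, so the conclusion you want, namely $\int_0^b w\ge (N/(N+1))^N$, cannot come from the comparison on $[0,b]$ alone --- the barycenter condition $\int_a^b xw\diff x=0$ is a \emph{global} constraint coupling the two sides. Second, the gluing step is not well defined. Matching moments separately on $[a,0]$ and $[0,b]$ produces two cones $L_\pm$ with no reason to satisfy $L_-(0)=L_+(0)$; you have five constraints (two moments on each side, plus continuity at $0$) for four parameters, and nothing guarantees consistency. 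The phrase ``the problem reduces to the one--parameter family of affine profiles'' therefore hides exactly the lemma you would need. Finally, for $N\neq 2$ the map $W\mapsto W^{N-1}$ is nonlinear, so the set of admissible $W$ with the two moment constraints is not a convex slice, and the extreme--point heuristic (``affine profiles are extremal'') cannot be invoked off the shelf. The alternative route you mention --- BBL applied to $w$ and its reflection --- is only gestured at, and it is not clear what pair of functions and what mass splitting would produce the desired inequality.

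For comparison, the paper's proof avoids all of these issues by working at the level of the cumulative distribution function $R(x)=\int_a^x w$. The barycenter constraint enters cleanly via the identity $\int_a^b R\diff x=b$ (integration by parts). The concavity hypothesis on $w^{1/(N-1)}$ is converted, via a direct one--dimensional BBL argument (a change of variables along the graph of $R$ plus H\"older), into the concavity of $R^{1/N}$. Then the tangent--line bound $R(x)\le R(0)\bigl(1+\tfrac{c}{N}x\bigr)^N$ with $c=w(0)/R(0)$, combined with $R\le 1$ and $R(a)=0$, is substituted into $\int_a^b R=b$ and gives $R(0)\ge (N/(N+1))^N$ after a one--line computation (with two cases, $b\gtrless 1/c$). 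No gluing or separate moment matching is needed, because the concavity of $R^{1/N}$ encodes the constraint coherently on the whole interval. If you want to repair your argument, I would suggest abandoning the two--sided moment matching and instead proving concavity of $R^{1/N}$ directly, which is the correct way to ``linearize'' the problem in this setting.
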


\begin{proof}
Let $R(x):=\int_a^x w(s) \diff s$ be the cumulative distribution function, which satisfies $0 \le R \le 1$, $R(a)=0$ and $R(b)=1$ by definition.
Note also that
\begin{equation}\label{eq:intR}
\int_a^b R(x) \diff x
=\Bigl[ xR(x) \Bigr]_a^b -\int_a^b xw(x) \diff x
=b.
\end{equation}

Since $w^{1/(N-1)}$ is concave, the Borell--Brascamp--Lieb inequality \cite{Borell,BL} with parameter $1/(N-1)$ implies that $R^{1/N}$ is concave.
Here we also give a detailed proof for later use in the rigidity problem.
Fix $x,y \in (a,b)$ and $\lambda \in (0,1)$.
For $t \in (0,1)$, we take $\sigma(t) \in (a,x)$ and $\tau(t) \in (a,y)$ satisfying
\[ \frac{1}{R(x)} \int_a^{\sigma(t)} w(s) \diff s = \frac{1}{R(y)} \int_a^{\tau(t)} w(s) \diff s =t. \]
By differentiating in $t$, we have
\begin{equation}\label{eq:u'}
\frac{w(\sigma(t)) \sigma'(t)}{R(x)} = \frac{w(\tau(t)) \tau'(t)}{R(y)} =1.
\end{equation}
We put $\theta(t) :=(1-\lambda) \sigma(t) +\lambda \tau(t)$ and observe
\[ R\bigl( (1-\lambda)x +\lambda y \bigr)
 = \int_a^{(1-\lambda)x +\lambda y} w(s) \diff s
 = \int_0^1 w\bigl( \theta(t) \bigr) \theta'(t) \diff t. \]
It follows from the concavity of $w^{1/(N-1)}$, \eqref{eq:u'} and the H\"older inequality that
\begin{align}
\begin{split} \label{eq:BBL}
\int_0^1 w\bigl( \theta(t) \bigr) \theta'(t) \diff t
&\ge \int_0^1 \Bigl( (1-\lambda) w\bigl( \sigma(t) \bigr)^{1/(N-1)} +\lambda w\bigl( \tau(t) \bigr)^{1/(N-1)} \Bigr)^{N-1} \\
&\qquad \times \biggl( (1-\lambda) \frac{R(x)}{w(\sigma(t))} +\lambda \frac{R(y)}{w(\tau(t))} \biggr) \diff t \\
&\ge \int_0^1 \bigl( (1-\lambda) R(x)^{1/N} +\lambda R(y)^{1/N} \bigr)^N \diff t \\
&= \bigl( (1-\lambda) R(x)^{1/N} +\lambda R(y)^{1/N} \bigr)^N.
\end{split}
\end{align}
Hence, $R^{1/N}$ is concave as we claimed.

\if0
Indeed, let $\hat{a},\hat{b}\in (a,b)$, $\lambda\in [0,1]$ and $t:=\lambda \hat{a}+(1-\lambda)\hat{b}$.
Let us define for $x\in (a,b)$, $l(x) := h(x) 1_{x\leq t}$, $f(x):=h(x)1_{x\leq\hat{a}}$ and $g(x):=h(x)1_{x\leq \hat{b}}$.
Then, since $h^\frac{1}{N-1}$ is concave (because $h$ is a $CD(0,N)$ density), we have that
$$\forall x,y \in (a,b), \quad l^\frac{1}{N-1}(\lambda x+(1-\lambda)y)\geq \lambda f^\frac{1}{N-1}(x) + (1-\lambda)g^\frac{1}{N-1}(y) $$
Therefore the Borell--Brascamp--Lieb inequality in $\R$ with parameter $p=\frac{1}{N-1}\geq -1$ gives us
$$ \left( \int l(x)\,dx \right)^\frac{1}{N} \geq \lambda\left(\int f(x)\,dx\right)^\frac{1}{N} +(1-\lambda)\left( \int g(x)\,dx\right)^\frac{1}{N}  $$
that is
$$  H^\frac{1}{N}(\lambda \hat{a} + (1-\lambda)\hat{b})  \geq \lambda H^\frac{1}{N}(\hat{a}) +(1-\lambda)H^\frac{1}{N}(\hat{b}), \quad \forall \hat{a},\hat{b}\in (a,b) $$
\fi

The concavity of $R^{1/N}$ implies
\[ R(x)^{1/N} \le R(0)^{1/N} +(R^{1/N})'(0) x
 \qquad \text{for all}\ x \in (a,b), \]
which can be rewritten as
\begin{equation}\label{eq:R}
R(x) \le R(0) \biggl( 1+\frac{c}{N}x \biggr)^N,
\qquad c:=\frac{R'(0)}{R(0)} =\frac{w(0)}{R(0)}>0.
\end{equation}
In particular, since $R(a)=0$, $a \ge -N/c$ holds.

Now, if $b \ge 1/c$, then we obtain from \eqref{eq:intR} that
\begin{align*}
b &= \int _a^{1/c} R(x)\diff x + \int_{1/c}^b R(x)\diff x\\
&\le \int_{-N/c}^{1/c} R(0)\biggl( 1+\frac{c}{N}x \biggr)^N \diff x + b-\frac{1}{c}\\
&= R(0) \Biggl[ \frac{N}{c(N+1)} \biggl( 1+\frac{c}{N}x \biggr)^{N+1} \Biggr]_{-N/c}^{1/c} + b -\frac{1}{c}\\
&= \frac{R(0)}{c} \biggl(\frac{N+1}{N}\biggr)^N +b-\frac{1}{c},
\end{align*}
where in the inequality we used \eqref{eq:R}, $a \ge -N/c$, and $R \le 1$.
This exactly gives $R(0) \geq (N/(N+1))^N$ which concludes the proof.
In the other case of $b<1/c$, we deduce from \eqref{eq:R} that
\[ 1 \le R(0) \biggl( 1+\frac{c}{N}b \biggr)^N
 < R(0) \biggl( 1+\frac{c}{N}x \biggr)^N \]
for $x \in (b,1/c)$.
Therefore, we similarly observe
\begin{align*}
b &= \int _a^b R(x)\diff x
 \leq \int_{-N/c}^{1/c} R(0)\biggl( 1+\frac{c}{N}x \biggr)^N \diff x - \biggl( \frac{1}{c} -b \biggr)\\
&= \frac{R(0)}{c} \biggl(\frac{N+1}{N}\biggr)^N +b-\frac{1}{c}.
\end{align*}
This completes the proof of the first inequality in \eqref{eq:1d_pos}.
The second one is obtained in the same way or by reversing the interval $(a,b)$.
\end{proof}

We can show that only the cone-like model space \eqref{eq:model} achieves equality in \eqref{eq:1d_pos} (up to translation and dilation).

\begin{lemma}[Rigidity on intervals; $N>1$]\label{lm:=_pos}
If equality holds in the former inequality in \eqref{eq:1d_pos}, then, for some $c>0$, we have $a=-N/c$, $b=1/c$, and
\begin{equation}\label{eq:=_pos}
w(x) =c \biggl( \frac{N}{N+1} \biggr)^N \biggl( 1+\frac{c}{N}x \biggr)^{N-1}.
\end{equation}
Similarly, if equality holds in the latter inequality in \eqref{eq:1d_pos}, then, for some $c>0$, we have $a=-1/c$, $b=N/c$, and
\[
w(x) =c \biggl( \frac{N}{N+1} \biggr)^N \biggl( 1-\frac{c}{N}x \biggr)^{N-1}.
\]
\end{lemma}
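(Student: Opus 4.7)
The idea is to revisit the proof of Lemma~\ref{lm:Grunbaum1d} and track the equality conditions at each step. Suppose $R(0) = \int_a^0 w(x)\diff x = (N/(N+1))^N$. Since this value lies strictly between $0$ and $1$, the origin is in the interior of $\supp(w)$; combined with concavity of $w^{1/(N-1)}$ this yields $w(0) > 0$, so the constant $c := w(0)/R(0) > 0$ used in \eqref{eq:R} is well-defined and the pointwise bound $R(x) \le R(0)(1+(c/N)x)^N$ is in force, with equality at $x=0$.

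Next I would examine the two sub-cases of the original proof. In the sub-case $b < 1/c$, for $x \in (b, 1/c)$ the chain $R(x) \le 1 \le R(0)(1+(c/N)b)^N < R(0)(1+(c/N)x)^N$ contains a strict inequality, which propagates to strict inequality in the final estimate of that sub-case; equality therefore rules it out. In the sub-case $b \ge 1/c$, equality at each step of the displayed chain enforces: (i) $R(x) = R(0)(1+(c/N)x)^N$ on $[\max\{a,-N/c\}, 1/c]$; (ii) $R \equiv 1$, equivalently $w \equiv 0$, on $[1/c, b]$; and (iii) $R \equiv 0$, equivalently $w \equiv 0$, on $(a, -N/c)$ whenever $a < -N/c$. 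Identifying $(a, b)$ with the actual support of $w$, as the conclusion of the lemma implicitly does, items (ii) and (iii) force $b = 1/c$ and $a = -N/c$.

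Differentiating the identity in (i) on $(-N/c, 1/c)$ then gives
\[
w(x) = R'(x) = cR(0)\left( 1+\frac{c}{N}x \right)^{N-1} = c\left(\frac{N}{N+1}\right)^N \left( 1+\frac{c}{N}x \right)^{N-1},
\]
which is precisely \eqref{eq:=_pos}. The symmetric statement corresponding to equality in the second inequality of \eqref{eq:1d_pos} follows by applying the first part to $\tilde w(x) := w(-x)$ on $(-b, -a)$, which preserves all hypotheses and swaps the two bounds.

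The main obstacle is conventional rather than computational: one must carefully disentangle whether equality forces $a = -N/c$ and $b = 1/c$ outright, or only forces $w$ to vanish on $(a, -N/c)$ and $(1/c, b)$ (the difference being immaterial under the natural reading that $(a,b)$ denotes the actual support). A related minor point is that the pointwise bound $R(x) \le R(0)(1+(c/N)x)^N$ is only meaningful for $x \ge -N/c$; for $x < -N/c$ one uses concavity of $R^{1/N}$ and $R \ge 0$ directly to conclude $R \equiv 0$, giving $a \ge -N/c$ with equality under our convention.
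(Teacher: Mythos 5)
Your proof is correct and follows essentially the same route as the paper's: trace equality through the chain of inequalities in the proof of Lemma~\ref{lm:Grunbaum1d}. The paper is terser (it simply invokes ``we deduce from the proof of Lemma~\ref{lm:Grunbaum1d} that $a=-N/c$ and $b=1/c$'' and notes equality in \eqref{eq:R}), but the underlying analysis is exactly what you make explicit: rule out $b<1/c$ by the strict inequality $R(0)\bigl(1+\tfrac{c}{N}x\bigr)^N>1$ on $(b,1/c)$, and in the sub-case $b\ge 1/c$ force $R(x)=R(0)\bigl(1+\tfrac{c}{N}x\bigr)^N$ on $(a,1/c]$, $R\equiv 1$ on $[1/c,b)$, and vanishing of the excess integral over $(-N/c,a)$. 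Your remark about reflection for the second bound also matches the paper's ``reversing the interval.''

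On your ``main obstacle'': the endpoint identifications are in fact forced outright and do not rely on reading $(a,b)$ as the support. For $a$, the equality analysis requires $\int_{-N/c}^a R(0)\bigl(1+\tfrac{c}{N}x\bigr)^N\,\diff x=0$, and since the integrand is strictly positive on $(-N/c,a)$ this gives $a=-N/c$ directly. For $b$, equality gives $w=0$ almost everywhere on $(1/c,b)$; but $w^{1/(N-1)}$ is concave on the open interval $(a,b)$, hence continuous there, so $w\equiv 0$ on $(1/c,b)$. Since $w(x)=cR(0)\bigl(1+\tfrac{c}{N}x\bigr)^{N-1}>0$ for $x\in(-N/c,1/c)$, a concave function cannot be positive on $(a,1/c)$ and vanish on a nonempty $(1/c,b)$ while staying nonnegative: concavity at any $x_0\in(1/c,b)$ between points of $(a,1/c)$ and $(x_0,b)$ would force the value at $x_0$ to be strictly positive. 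Hence $b=1/c$, and no conventional identification of $(a,b)$ with the support is needed.
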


\begin{proof}
We deduce from the proof of Lemma~\ref{lm:Grunbaum1d} that $a=-N/c$ and $b=1/c$ necessarily hold.
Moreover, we have equality in \eqref{eq:R} for all $x \in (a,b)$.
Together with $R(1/c)=1$, we obtain
\[ R(x) =\biggl( \frac{N}{N+1} \biggr)^N \biggl( 1+\frac{c}{N}x \biggr)^N, \]
as well as
\[ w(x) =R'(x) =c \biggl( \frac{N}{N+1} \biggr)^N \biggl( 1+\frac{c}{N}x \biggr)^{N-1}. \]
The case of the latter inequality in \eqref{eq:1d_pos} is seen by reversing the interval $(a,b)$.
\end{proof}

One can regard \eqref{eq:=_pos} that $w$ has the cone structure with the apex $a=-N/c$.

\begin{remark}\label{rm:H}
\begin{enumerate}[(a)]
\item 
Note that, in the proof of Lemma~\ref{lm:Grunbaum1d}, the concavity of $R^{1/N}$ (more precisely, the inequality \eqref{eq:R}) is the essential ingredient.
This leads to a slight generalization of one-dimensional Gr\"unbaum's inequality assuming only \eqref{eq:R}.
In higher dimensions, however, we do not know a suitable condition guaranteeing \eqref{eq:R} on almost every needle.

\item
In the proof of Lemma~\ref{lm:=_pos}, testing equality in \eqref{eq:R} was sufficient to obtain \eqref{eq:=_pos}.
In particular, we did not need a characterization of equality in the Borell--Brascamp--Lieb inequality.
\end{enumerate}
\end{remark}

\subsection{Case of $N \in (-\infty,-1) \cup \{\infty\}$}\label{ssc:1d_neg}

Next, we consider $N=\infty$ and $N<-1$, in a similar way to $N>1$.

\begin{lemma}[Gr\"unbaum's inequality on intervals; $N<-1$, $N=\infty$]\label{lm:1d_neg}
Let $-\infty \le a<0<b \le \infty$ and $w\colon (a,b) \lra [0,\infty)$ be a nonnegative integrable function satisfying
$\int_a^b w(x)\diff x=1$ and $\int_a^b xw(x)\diff x=0$.
\begin{enumerate}[{\rm (i)}]
\item\label{1d_neg}
If $w^{1/(N-1)}$ is convex for some $N<-1$, then we have
\begin{equation}\label{eq:1d_neg}
\int_a^0 w(x) \diff x \geq \biggl( \frac{N}{N+1} \biggr)^N, \qquad
\int_0^b w(x) \diff x \geq \biggl( \frac{N}{N+1} \biggr)^N.
\end{equation}
\item\label{1d_inf}
If $\log w$ is concave, then we have
\begin{equation}\label{eq:1d_inf}
\int_a^0 w(x) \diff x \geq \e^{-1}, \qquad
\int_0^b w(x) \diff x \geq \e^{-1}.
\end{equation}
\end{enumerate}
\end{lemma}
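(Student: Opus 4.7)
The plan is to mirror the proof of Lemma~\ref{lm:Grunbaum1d}, adapted to the possibly infinite endpoints and the opposite direction of concavity. I would again introduce the cumulative distribution function $R(x):=\int_a^x w(s)\diff s$, with $R(a^+)=0$ and $R(b^-)=1$. The counterpart of \eqref{eq:intR} that accommodates $a=-\infty$ or $b=+\infty$ is the Fubini identity
\[
\int_a^0 R(x)\diff x \;=\; \int_0^b \bigl(1-R(x)\bigr)\diff x \;=\; \int_0^b sw(s)\diff s,
\]
which is nothing but the zero-mean condition, and which is meaningful in both cases because $\int|x|w(x)\diff x<\infty$ forces $xR(x)\to 0$ at $-\infty$ and $x(1-R(x))\to 0$ at $+\infty$.

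Next, I would transfer the assumption on $w$ to a functional property of $R$ by running the argument of \eqref{eq:BBL} verbatim, with the same auxiliary parametrizations $\sigma$ and $\tau$ but with the exponent appropriate to the case at hand. In case~\eqref{1d_neg}, the one-dimensional Borell--Brascamp--Lieb inequality with parameter $p=1/(N-1)\in(-1/2,0)$ yields convexity of $R^{1/N}$ on $(a,b)$; in case~\eqref{1d_inf}, Pr\'ekopa--Leindler (the $p=0$ case) yields concavity of $\log R$ on $R^{-1}((0,\infty))$. A tangent-line estimate at $x=0$ then produces the pointwise bounds
\[
R(x) \le R(0)\biggl(1+\frac{c}{N}x\biggr)^{N} \quad \text{and} \quad R(x) \le R(0)\e^{cx},
\]
respectively, with $c:=w(0)/R(0)>0$; in case~\eqref{1d_neg} the first bound is valid wherever the base is positive, i.e.\ on $(a,-N/c)$, and a short check using $R(b^-)=1$ shows that the crossover point $x^*$ at which the right-hand side reaches $1$ satisfies $0 \le x^* \le b$.

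To close the argument I would insert these pointwise bounds into the zero-mean identity. For case~\eqref{1d_inf}, integration gives $\int_a^0 R \le R(0)/c$ on the left, while the same bound together with $R\le 1$ gives $\int_0^b(1-R)\ge \int_0^{x^*}(1-R(0)\e^{cx})\diff x$ with $x^*=-(\log R(0))/c$ on the right; matching the two sides collapses to $0 \ge -1-\log R(0)$, equivalent to $R(0)\ge \e^{-1}$. For case~\eqref{1d_neg}, writing $s:=R(0)^{-1/N}$ (so that the cancellation $R(0)\,s^{N+1}=s$ holds, which is the arithmetic miracle that makes the scheme work) reduces the analogous computation to the scale-free inequality $Ns \le N+1$; dividing by $N<0$ flips it to $s \ge (N+1)/N$, equivalent to $R(0)\ge (N/(N+1))^N$. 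The symmetric bounds on $\int_0^b w(x)\diff x$ follow at once by reversing the interval $(a,b)$.

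The main obstacle, I expect, is keeping signs straight in case~\eqref{1d_neg}: since $p=1/(N-1)$ is negative, every exponentiation inside the H\"older-type step of \eqref{eq:BBL} flips an inequality, and one has to confirm that the net effect is convexity of $R^{1/N}$ with the correct orientation, so that the subsequent tangent-line inequality produces an upper (not lower) bound on $R$. A secondary point is justifying the Fubini/integration-by-parts identity when $a=-\infty$ or $b=+\infty$; this uses only the finite-first-moment assumption and is routine, and the convergence of the comparison integral in case~\eqref{1d_neg} is ensured by $N+1<0$.
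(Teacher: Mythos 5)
Your argument is correct, and it reaches the same pointwise tangent-line comparison as the paper (convexity of $R^{1/N}$ for $N<-1$, concavity of $\log R$ for $N=\infty$, hence $R(x)\le R(0)\bigl(1+\tfrac{c}{N}x\bigr)^N$ resp.\ $R(x)\le R(0)\e^{cx}$ with $c=w(0)/R(0)$), but you close it by a genuinely different route. The paper first disposes of the possibly unbounded interval by a cutoff-and-limit procedure (replace $w$ by a normalized restriction $w_k$ to $(a_k,k)$, apply the bounded case, let $k\to\infty$), and then works from the single identity $\int_a^b R\,\diff x=b$ with a case split on $b\ge 1/c$ versus $b<1/c$, inserting the pointwise bound and $R\le 1$ on complementary ranges. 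You instead exploit the symmetric Fubini identity $\int_a^0 R\,\diff x=\int_0^b(1-R)\,\diff x=\int_0^b s\,w(s)\,\diff s$, which is directly finite under the finite-first-moment hypothesis even when $a=-\infty$ or $b=+\infty$, so no truncation is needed; the substitution $s=R(0)^{-1/N}$ (using $R(0)\,s^{N+1}=s$) then collapses the comparison to $Ns\le N+1$, i.e.\ $R(0)\ge (N/(N+1))^N$. This buys a cleaner, uniform treatment of the unbounded cases and a more transparent algebra. A second difference is that the paper obtains part~\eqref{1d_inf} by letting $N\to-\infty$ in~\eqref{1d_neg}, while you give it a direct Pr\'ekopa--Leindler proof; both are valid, and the paper in fact records a direct $N=\infty$ argument inside the proof of Lemma~\ref{lm:=_inf}\eqref{1d=_inf} because the rigidity discussion needs it. One step you should make explicit: before restricting the right-hand integral to $[0,x^*]$ you must check that the crossover point $x^*$ (where the comparison function reaches $1$) satisfies $x^*\le b$; this follows in one line from $R(b^-)=1$ and the pointwise upper bound on $R$, but it is load-bearing.
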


Without loss of generality, we will assume that $w>0$ on $(a,b)$.
Recall that, as we mentioned at the beginning of this section, the interval $(a,b)$ can be unbounded.

\begin{proof}
First of all, if $b=\infty$, then we cut off and normalize $w$ as
\[ w_k := \biggl( \int_{a_k}^k w(s) \diff s \biggr)^{-1} \cdot w|_{(a_k,k)} \]
for (large) $k \in \N$, where $a_k \in (a,0)$ is chosen so as to satisfy that the barycenter of $w_k \diff x$ is $0$.
Then, $X_k:=((a_k,k),|\cdot|,w_k \diff x)$ again satisfies the assumptions, and \eqref{eq:1d_neg} or \eqref{eq:1d_inf} for the original space can be obtained as the limit of those for $X_k$ as $k \to \infty$.
Hence, without loss of generality, we will assume $b<\infty$ and, by the same reasoning, $a>-\infty$.

As in the proof of Lemma~\ref{lm:Grunbaum1d}, we set $R(x):=\int_a^x w(s) \diff s$, fix $x,y\in (a,b)$ and $\lambda\in (0,1)$, and define the functions $\sigma, \tau, \theta$.

\eqref{1d_neg}
In place of \eqref{eq:BBL}, we deduce from the convexity of $w^{1/(N-1)}$ and the H\"older inequality of the form
\begin{align}\label{eq:holder1}
\begin{split}
(1-\lambda) \alpha_1 \alpha_2 +\lambda \beta_1 \beta_2
&\le \bigl( (1-\lambda) \alpha_1^{(N-1)/N} +\lambda \beta_1^{(N-1)/N} \bigr)^{N/(N-1)} \\
&\quad \times \bigl( (1-\lambda) \alpha_2^{1-N} +\lambda \beta_2^{1-N} \bigr)^{1/(1-N)}
\end{split}
\end{align}
for $\alpha_1,\alpha_2,\beta_1,\beta_2 \ge 0$ (valid for $N<0$) that
\begin{align*}
R\bigl( (1-\lambda)x +\lambda y \bigr)
&=\int_0^1 w\bigl( \theta(t) \bigr) \theta'(t) \diff t \\
&\ge \int_0^1 \Bigl( (1-\lambda) w\bigl( \sigma(t) \bigr)^{1/(N-1)} +\lambda w\bigl( \tau(t) \bigr)^{1/(N-1)} \Bigr)^{N-1} \\
&\qquad \times \biggl( (1-\lambda) \frac{R(x)}{w(\sigma(t))} +\lambda \frac{R(y)}{w(\tau(t))} \biggr) \diff t \\
&\ge \int_0^1 \bigl( (1-\lambda) R(x)^{1/N} +\lambda R(y)^{1/N} \bigr)^N \diff t \\
&= \bigl( (1-\lambda) R(x)^{1/N} +\lambda R(y)^{1/N} \bigr)^N
\end{align*}
by taking $\alpha_1=R(x)^{1/(N-1)}$, $\alpha_2=(R(x)/w(\sigma(t)))^{1/(1-N)}$ in \eqref{eq:holder1}.
Thus, $R^{1/N}$ is convex, which yields
\begin{equation}\label{eq:H_neg}
R(x)^{1/N} \ge R(0)^{1/N} \biggl( 1+\frac{c}{N}x \biggr), \qquad
c:=\frac{w(0)}{R(0)}>0.
\end{equation}

When $b \ge 1/c$, we obtain from \eqref{eq:intR} that
\begin{align}
b &= \int _a^b R(x)\diff x
 \le  \int_a^{1/c} R(0)\biggl( 1+\frac{c}{N}x \biggr)^N \diff x +b-\frac{1}{c} \label{eq:b_neg}\\
&= R(0) \Biggl[ \frac{N}{c(N+1)} \biggl( 1+\frac{c}{N}x \biggr)^{N+1} \Biggr]_a^{1/c} +b-\frac{1}{c} \nonumber\\
&\le \frac{R(0)}{c} \biggl(\frac{N+1}{N}\biggr)^N +b-\frac{1}{c}, \nonumber
\end{align}
which yields the former inequality in \eqref{eq:1d_neg}.
If $b<1/c$, then we infer from
\[ 
1 \le R(0) \biggl( 1+\frac{c}{N}b \biggr)^N < R(0) \biggl( 1+\frac{c}{N}x \biggr)^N
\]
for $x \in (b,1/c)$ that
\begin{align*}
b &= \int _a^b R(x)\diff x
 \le \int_a^{1/c} R(0)\biggl( 1+\frac{c}{N}x \biggr)^N \diff x - \biggl( \frac{1}{c} -b \biggr)\\
&\le \frac{R(0)}{c} \biggl(\frac{N+1}{N}\biggr)^N +b-\frac{1}{c}.
\end{align*}
This completes the proof of the former inequality of \eqref{eq:1d_neg}.
The latter inequality is seen by reversing the interval $(a,b)$.

\eqref{1d_inf}
Since the concavity of $\log w$ implies the convexity of $w^{1/(N-1)}$ for all $N \in (-\infty,-1)$, we can derive \eqref{eq:1d_inf} from \eqref{eq:1d_neg} by letting $N \to -\infty$.
\end{proof}

We proceed to the equality case, which is more involved than Lemma~\ref{lm:=_pos} due to the unboundedness of the interval.

\begin{lemma}[Rigidity on intervals; $N<-1$, $N=\infty$]\label{lm:=_inf}
\begin{enumerate}[{\rm (i)}]
\item\label{1d=_neg}
If equality holds in the former inequality in \eqref{eq:1d_neg},
then, for some $c>0$, we have $a=-\infty$, $b=1/c$, and
\begin{equation}\label{eq:=_inf}
w(x) =c \biggl( \frac{N}{N+1} \biggr)^N \biggl( 1+\frac{c}{N}x \biggr)^{N-1}.
\end{equation}
Similarly, if equality holds in the latter inequality in \eqref{eq:1d_neg},
then, for some $c>0$, we have $a=-1/c$, $b=\infty$, and
\[
w(x) =c \biggl( \frac{N}{N+1} \biggr)^N \biggl( 1-\frac{c}{N}x \biggr)^{N-1}.
\]

\item\label{1d=_inf}
If equality holds in the former inequality in \eqref{eq:1d_inf},
then we have $a=-\infty$, $b=1/c$, and $w(x)=c \e^{cx-1}$ for some $c>0$.
Similarly, if equality holds in the latter inequality in \eqref{eq:1d_inf},
then we have $a=-1/c$, $b=\infty$, and $w(x)=c \e^{-cx-1}$ for some $c>0$.
\end{enumerate}
\end{lemma}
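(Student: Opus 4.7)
The plan is to revisit each step in the proof of Lemma~\ref{lm:1d_neg} and identify precisely when the inequalities become equalities. Throughout, I write $R(x) := \int_a^x w(s)\diff s$, $c := w(0)/R(0) > 0$, and I assume $w > 0$ on $(a,b)$ (the reduction already used in the proof of Lemma~\ref{lm:1d_neg}).

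For (i), suppose the former equality in \eqref{eq:1d_neg} holds, namely $R(0) = (N/(N+1))^N$. My first step is to rule out the case $b < 1/c$: the map $x \mapsto R(0)(1+cx/N)^N$ is strictly increasing for $N<-1$ and $c>0$, and $R(0)(1+cb/N)^N \ge R(b) = 1$ by the supporting-line bound \eqref{eq:H_neg}, so $R(0)(1+cx/N)^N > 1$ on the positive-length interval $(b, 1/c)$. This makes the estimate $\int_b^{1/c} R(0)(1+cx/N)^N \diff x > 1/c - b$ strict, and the rest of the chain then yields $R(0) > (N/(N+1))^N$, contradicting equality. Hence $b \ge 1/c$, and equality throughout the chain forces simultaneously: (a) $R(x) = R(0)(1+cx/N)^N$ on $(a, 1/c)$ (equality in the supporting bound); (b) $R(x) = 1$ on $(1/c, b)$, which together with the strict monotonicity of $R$ forces $b = 1/c$; and (c) the boundary term at $a$, which equals $R(0) \cdot N/(c(N+1)) \cdot (1+ca/N)^{N+1} \ge 0$ and was dropped in the proof, must vanish. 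Since $(1+ca/N)^{N+1}$ is strictly positive for every finite $a$ and tends to $0$ only as $a \to -\infty$, this forces $a = -\infty$. Substituting $R(0) = (N/(N+1))^N$ into the resulting formula for $R$ and differentiating then yields \eqref{eq:=_inf}. The symmetric statement for the latter inequality in \eqref{eq:1d_neg} follows by applying the same argument to the reflected density $\tilde w(x) := w(-x)$.

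For (ii), I repeat the scheme with H\"older replaced by AM--GM throughout. From $\log w$ concave one obtains $\log R$ concave (via the same $(\sigma, \tau, \theta)$ transport computation used in the proof of Lemma~\ref{lm:1d_neg}, or equivalently via $\mathrm{BBL}(0,0)$), so the supporting-line inequality at $0$ reads $R(x) \le R(0) \e^{cx}$. The analogous estimate $b \le \int_a^{1/c} R(0) \e^{cx} \diff x + (b - 1/c)$ gives $R(0) \ge \e^{-1}$, and the same trichotomy (rule out $b < 1/c$ by strict monotonicity of $\e^{cx}$; enforce $b = 1/c$ via $R \equiv 1$ on $(1/c, b)$; push $a \to -\infty$ from the vanishing of the boundary term $\e^{ca}$) produces $R(x) = \e^{cx-1}$, whence $w(x) = R'(x) = c\,\e^{cx-1}$.

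The one technical obstacle absent in the bounded setting of Lemma~\ref{lm:=_pos} is showing that equality can only be achieved by sending the left endpoint all the way to $-\infty$. This is controlled by the strict positivity of $(1+ca/N)^{N+1}$ (resp.\ $\e^{ca}$) at any finite $a$: it forces the corresponding boundary term to be strictly positive, so the chain of inequalities in Lemma~\ref{lm:1d_neg} is strict whenever $a > -\infty$, which is the key extra ingredient distinguishing the unbounded $N<-1$ and $N=\infty$ cases from the bounded $N>1$ case.
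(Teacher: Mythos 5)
Your analysis of the equality chain is correct as far as it goes, and matches what the paper does for the case $b<\infty$: forcing equality in the supporting-line bound on $(a,1/c)$, using $R\equiv 1$ on $(1/c,b)$ to get $b=1/c$, and forcing the boundary term $\frac{N}{c(N+1)}(1+\tfrac{c}{N}a)^{N+1}$ (resp.\ $\e^{ca}$) to vanish to conclude $a=-\infty$. However, you have misidentified where the new difficulty lies. You write that ``the one technical obstacle absent in the bounded setting'' is pushing $a\to-\infty$; in fact that part is routine. The genuinely new obstacle, which the paper flags explicitly (``which is more involved than Lemma~\ref{lm:=_pos} due to the unboundedness of the interval''), is ruling out $b=\infty$, and your argument does not address it.

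The issue is that the entire chain
\[
b=\int_a^b R\diff x\le \int_a^{1/c}R(0)\Bigl(1+\tfrac{c}{N}x\Bigr)^N\diff x+b-\tfrac1c\le \frac{R(0)}{c}\Bigl(\tfrac{N+1}{N}\Bigr)^N+b-\tfrac1c
\]
only makes sense when $b<\infty$; when $b=\infty$ both sides are infinite and the chain degenerates to $\infty\le\infty$, so ``equality throughout the chain'' is vacuous and you cannot conclude that $R\equiv1$ on $(1/c,b)$. Your step (b) therefore does not force $b=1/c$ in the a priori possible case $b=\infty$, which for $N<-1$ or $N=\infty$ cannot be excluded from the hypotheses of Lemma~\ref{lm:1d_neg}. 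The Gr\"unbaum bound itself for $b=\infty$ was obtained in Lemma~\ref{lm:1d_neg} by a cut-off $w_k$ on $(a_k,k)$ followed by a limit, and equality in a limit of inequalities does not propagate back to equality at each finite stage. The paper's proof accordingly devotes a separate argument: it assumes equality with $b=\infty$, tracks the slack term $\int_{1/c_k}^k(1-R_k)\diff x$ in the cut-off estimate, observes that it must tend to $0$, and derives the contradiction $\int_a^{1/c}w\diff s=1$ with $w>0$ on $(a,b)$. Without this step (or a substitute), your proof has a gap. Once you supply it, the rest of your argument coincides with the paper's.
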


\begin{proof}
We will consider only the former inequalities in \eqref{eq:1d_neg} and \eqref{eq:1d_inf},
since the latter inequalities can be handled by reversing the interval.

\eqref{1d=_neg}
We first assume $b<\infty$.
In this case, we deduce from the proof of Lemma~\ref{lm:1d_neg} that $a=-\infty$, $b=1/c$, and that equality holds in \eqref{eq:H_neg}.
Combining these with $R(1/c)=1$ implies
\[ R(x) =\biggl( \frac{N}{N+1} \biggr)^N \biggl( 1+\frac{c}{N}x \biggr)^N, \]
and hence
\[ w(x) =R'(x) =c \biggl( \frac{N}{N+1} \biggr)^N \biggl( 1+\frac{c}{N}x \biggr)^{N-1}. \]

Next, we show that equality never holds when $b=\infty$.
Assume in contrary that equality holds in the former inequality in \eqref{eq:1d_neg}.
For (large) $k \in \N$, we take $a_k \in (a,0)$ and $w_k$ as in the proof of Lemma~\ref{lm:1d_neg}, and put
\[ R_k(x) :=\int_{a_k}^x w_k(s) \diff s, \qquad
 c_k:=\frac{R'_k(0)}{R_k(0)} =\frac{w_k(0)}{\int_{a_k}^0 w_k(s) \diff s}. \]
Observe from
\[ \lim_{k \to \infty} c_k = \frac{w(0)}{\int_a^0 w(s) \diff s} =:c \]
that $k \ge 1/c_k$ for large $k$.
Then, in the estimate \eqref{eq:b_neg} in the proof of Lemma~\ref{lm:1d_neg},
\[ k-\frac{1}{c_k} -\int_{1/c_k}^k R_k(x) \diff x =\int_{1/c_k}^k \bigl( 1-R_k(x) \bigr) \diff x \]
necessarily tends to $0$ as $k \to \infty$.
This implies that $\int_a^{1/c} w(s) \diff s =1$,
which is a contradiction since we assumed $w>0$ on $(a,b)$.

\eqref{1d=_inf}
Let us begin with a direct proof of \eqref{eq:1d_inf} under $b<\infty$.
We use the same notations as Lemma~\ref{lm:1d_neg}.
It follows from the concavity of $\log w$ that
\begin{align*}
R\bigl( (1-\lambda)x +\lambda y \bigr)
&= \int_0^1 w\bigl( \theta(t) \bigr) \theta'(t) \diff t \\
&\ge \int_0^1 w\bigl( \sigma(t) \bigr)^{1-\lambda} w\bigl( \tau(t) \bigr)^{\lambda}
 \biggl( (1-\lambda) \frac{R(x)}{w(\sigma(t))} +\lambda \frac{R(y)}{w(\tau(t))} \biggr) \diff t \\
&= \int_0^1 \biggl\{ (1-\lambda) R(x) \biggl( \frac{w(\tau(t))}{w(\sigma(t))} \biggr)^{\lambda}
 +\lambda R(y) \biggl( \frac{w(\sigma(t))}{w(\tau(t))} \biggr)^{1-\lambda} \biggr\} \diff t.
\end{align*}
Together with the concavity of $\log$ and Jensen's inequality, we obtain the concavity of $\log R$:
\begin{align*}
\log R\bigl( (1-\lambda)x +\lambda y \bigr)
&\ge (1-\lambda) \log\biggl[ R(x) \int_0^1 \biggl( \frac{w(\tau(t))}{w(\sigma(t))} \biggr)^{\lambda} \diff t \biggr] \\
&\quad +\lambda \log\biggl[ R(y) \int_0^1 \biggl( \frac{w(\sigma(t))}{w(\tau(t))} \biggr)^{1-\lambda} \diff t \biggr] \\
&\ge (1-\lambda) \biggl( \log R(x) +\int_0^1 \lambda \log\biggl[ \frac{w(\tau(t))}{w(\sigma(t))} \biggr] \diff t \biggr) \\
&\quad +\lambda \biggl( \log R(y) +\int_0^1 (1-\lambda) \log\biggl[ \frac{w(\sigma(t))}{w(\tau(t))} \biggr] \diff t \biggr) \\
&= (1-\lambda) \log R(x) +\lambda \log R(y).
\end{align*}
Therefore,
\[ \log R(x) \le \log R(0) +cx, \qquad c:=\frac{w(0)}{R(0)}>0. \]
When $b \ge 1/c$, we have
\[
b \le \int_a^{1/c} R(0) \e^{cx} \diff x +b-\frac{1}{c}
 \le \frac{R(0)}{c} \e +b-\frac{1}{c},
\]
which yields \eqref{eq:1d_inf}.
In the case of $b<1/c$, since $1 \le R(0) \e^{cb} \le R(0) \e^{cx}$ for $x \ge b$,
we similarly find
\[
b \le \int_a^{1/c} R(0) \e^{cx} \diff x -\biggl( \frac{1}{c}-b \biggr)
 \le \frac{R(0)}{c} \e +b-\frac{1}{c}.
\]

If equality holds, then we have $a=-\infty$, $b=1/c$, and $\log w$ is an affine function.
Put $w(x) =\alpha \e^{\delta x}$ ($\alpha,\delta>0$) and observe that
\[ R(b) =\frac{\alpha}{\delta}\e^{\delta/c} =1, \qquad
 c =\frac{w(0)}{R(0)} =\delta. \]
Therefore, we obtain $\delta=c$, $\alpha=c\e^{-1}$, and $w(x)=c\e^{cx-1}$.
One can also show that equality never holds when $b=\infty$ in a similar way to \eqref{1d=_neg}.
\end{proof}

\begin{remark}\label{rm:L2}
Note that $w$ in \eqref{eq:=_inf} satisfies $w(x) \diff x \in \mathcal{P}^2((-\infty,1/c))$ for $N<-2$, but $w(x) \diff x \in \mathcal{P}^1((-\infty,1/c)) \setminus \mathcal{P}^2((-\infty,1/c))$ for $N \in [-2,-1)$.
\end{remark}

\section{Gr\"{u}nbaum’s inequality for $N \in (1,\infty)$}\label{sc:proof}

We give two proofs of Theorem~\ref{thm:main}, one via localization and one without.
We will need both for our rigidity result (Theorem~\ref{th:rigid}).

Let $(X,\dist_X,\fm)$, $\mu=\rho\fm$, $x_0 \in X$ and $\gamma\colon \R \lra X$ be as in Theorem~\ref{thm:main}.
Recall from Proposition~\ref{pr:proj} that $T(x)=(\bb_\gamma(x),\Pi_Y(x))$, where $\Pi_Y \colon X\lra Y$ is the projection to $Y$, is an isometry from $X$ to $\R \times Y$, namely
\begin{equation}\label{eq:prod}
\dist_X^2(x_1,x_2) =\bigl( \bb_{\gamma}(x_1)-\bb_{\gamma}(x_2) \bigr)^2 +\dist_Y^2\bigl( \Pi_Y(x_1),\Pi_Y(x_2) \bigr)
\end{equation}
for all $x_1,x_2\in X$.
In what follows, via the isometry $T$, we identify $(X,\dist_X,\fm)$ with the product of $(\R,|\cdot|,\diff x)$ and $(Y,\dist_Y,\fn)$ ($\fm$ is identified with $\diff x\otimes \fn$).

By translating $\gamma$, we can assume that $\bb_{\gamma}(x_0)=0$ without loss of generality.
Then, we have the following key observation.

\begin{lemma}\label{lm:0mean}
We have
\[ \int_X \bb_{\gamma} \diff \mu=0. \]
\end{lemma}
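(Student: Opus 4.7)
The plan is to exploit the barycenter condition on $x_0$ together with the product structure $X = \R \times Y$ given by the splitting theorem (Proposition~\ref{pr:proj}). Via the isometry $T$, identify $x_0$ with a point of the form $(0, y_0)$ with $y_0 \in Y$, which is possible because we have arranged $\bb_\gamma(x_0) = 0$. For $t \in \R$, consider the one-parameter family of competitors $z_t := (t, y_0) \in \R \times Y = X$ along the straight line through $x_0$ parallel to $\gamma$, and study how the barycentric functional behaves on this family.

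First I would use the product formula \eqref{eq:prod} to compute, for every $x \in X$ identified with $(\bb_\gamma(x), \Pi_Y(x))$,
\[
\dist_X^2(z_t, x) - \dist_X^2(z_0, x)
= \bigl(\bb_\gamma(x) - t\bigr)^2 - \bb_\gamma(x)^2
= t^2 - 2t\, \bb_\gamma(x),
\]
the $Y$-component cancelling exactly because $z_t$ and $z_0$ share the second coordinate $y_0$. Before integrating against $\mu$, I would check integrability: $\bb_\gamma$ is $1$-Lipschitz, hence $|\bb_\gamma(x)| \le \dist_X(x, x_0)$, which is $\mu$-integrable because $\mu \in \mathcal{P}^1(X)$. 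Thus the function
\[
\phi(t) := \int_X \bigl\{ \dist_X^2(z_t, x) - \dist_X^2(z_0, x) \bigr\} \mu(\diff x)
= t^2 - 2t \int_X \bb_\gamma \diff\mu
\]
is a well-defined smooth (quadratic) function of $t \in \R$.

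Now I would invoke the defining property of the barycenter: since $x_0 = z_0$ attains the infimum of $z \mapsto \int_X \{\dist_X^2(z,x) - \dist_X^2(z_0,x)\} \mu(\diff x)$ over all $z \in X$, in particular it minimizes this functional over the subfamily $\{z_t : t \in \R\}$. Hence $\phi$ attains its minimum at $t = 0$, and the first-order condition $\phi'(0) = 0$ gives $\int_X \bb_\gamma \diff\mu = 0$, as claimed.

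I do not anticipate a real obstacle here: the whole argument is a one-line calculus computation once the product decomposition is invoked, and the only subtle point is checking that $\bb_\gamma$ is $\mu$-integrable, which follows immediately from the $1$-Lipschitz property of the Busemann function and the assumption $\mu \in \mathcal{P}^1(X)$.
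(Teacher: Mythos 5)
Your proof is correct and takes essentially the same route as the paper: both consider the one-parameter family $t \mapsto (t,\Pi_Y(x_0))$ in the split product $\R\times Y$, observe via the product formula \eqref{eq:prod} that the $Y$-component drops out leaving the quadratic $t^2 - 2t\int_X \bb_\gamma\diff\mu$, and then use the barycenter minimality of $x_0$ to conclude that the derivative at $t=0$ vanishes. The only (cosmetic) difference is that you work directly with the renormalized functional $\dist_X^2(z,x)-\dist_X^2(z_0,x)$, which covers the $\mathcal{P}^1$ case in one stroke, whereas the paper first treats $\mathcal{P}^2$ and then remarks on the adaptation.
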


\begin{proof}
Let $\eta\colon \R \lra X$ be the geodesic given by $\eta(t):=(t,\Pi_Y(x_0))$.
Note that $\eta(0)=x_0\ (=(\bb_\gamma(x_0),\Pi_Y(x_0)))$ by our choice $\bb_{\gamma}(x_0)=0$.
Then we deduce from \eqref{eq:prod} that, provided that $\mu \in \mathcal{P}^2(X)$,
\begin{align*}
&\frac{\diff}{\diff t} \biggl[ \int_X \dist_X^2 \bigl( \eta(t),x \bigr) \,\mu(\diff x) \biggr]_{t=0} \\
&= \frac{\diff}{\diff t} \biggl[ \int_X \bigl\{ \bigl( t-\bb_{\gamma}(x) \bigr)^2 +\dist_Y^2 \bigl( \Pi_Y(x_0),\Pi_Y(x) \bigr) \bigr\} \,\mu(\diff x) \biggr]_{t=0} \\
&= -2 \int_X \bb_{\gamma}(x) \,\mu(\diff x).
\end{align*}
Since $x_0$ is a barycenter of $\mu$, the left hand side coincides with $0$.
If $\mu$ is only of finite first moment, then we differentiate
\[
\int_X \bigl\{ \dist_X^2\bigl( \eta(t),x \bigr) -\dist_X^2(z_0,x) \bigr\} \,\mu(\diff x)
\]
with an arbitrarily fixed point $z_0 \in X$, and obtain the same conclusion.
This completes the proof.
\end{proof}

\subsection{First proof without localization}\label{ssc:1st}

The first proof of Theorem~\ref{thm:main} is based on Proposition~\ref{pr:proj}, which ensures that $\bb_\gamma\#\mu$ has a density $w$ with respect to the Lebesgue measure on $\R$, and $w^{1/(\beta -1)}$ ($\beta>N$), $\log w$ ($\beta=\infty$), or $-w^{1/(\beta-1)}$ ($\beta<-1$) is concave.
Moreover, by Lemma~\ref{lm:0mean} above, $0$ is its barycenter.
Hence, our one-dimensional analysis in the previous section (Lemmas~\ref{lm:Grunbaum1d}, \ref{lm:1d_neg} with $N=\beta$) yields the result.

\subsection{Second proof via localization}\label{ssc:2nd}

The second proof makes use of the localization (a.k.a.\ needle decomposition), and we additionally assume $\mu \in \mathcal{P}^2(X)$ (unless $\beta>N$).
The localization technique provides a decomposition of a space into a family of geodesics (called \emph{needles}) in such a way that those geodesics inherit some geometric information of the original space.
Through this decomposition, one can reduce a problem into its one-dimensional counterpart.
We refer to \cite[Theorem~5.1]{CM} and \cite{Kl} for the precise statement and more information.

Put $\Omega:=\supp(\mu) =\overline{X \setminus \rho^{-1}(0)}$, which is convex (and bounded when $\beta>N$; recall Remark~\ref{rm:main}).
Thanks to Lemma~\ref{lm:0mean}, we can employ $f:=\bb_{\gamma}\rho$ as a conditional function (as $f$ in \cite[Theorem~5.1]{CM}).
We remark that $f \dist_X(x_0,\cdot) \in L^1(\fm)$ (equivalently, $\bb_\gamma \dist_X(x_0,\cdot) \in L^1(\mu)$)
since $\mu \in \mathcal{P}^2(X)$ and $\bb_{\gamma}$ is $1$-Lipschitz.

We denote the resulting decomposition by $\{(X_q,\fm_q)\}_{q \in Q}$,
where $\fm_q$ is a probability measure on $X_q \subset X$.
The set $X_q$ is the image of a minimal geodesic (so-called a needle) and carries the natural distance structure as the restriction of $\dist_X$.
We also have a measure $\fq$ on $Q$
and $\fq$-almost every needle $(X_q,\fm_q)$ satisfies $\CD(0,N)$ as well as
\begin{equation}\label{eq:1D}
\int_{X_q} \bb_{\gamma} \rho \diff\fm_q =0.
\end{equation}
To be precise, the decomposition is done except a set $Z \subset X$ such that $f=0$ $\fm$-almost everywhere in $Z$.
In the current setting, $Z$ is $\mu$-negligible since $\fm(\bb_{\gamma}^{-1}(0))=0$.
Then, for every $\phi \in L^1(\fm)$ with $\supp(\phi) \subset \Omega$, we have
\[
\int_\Omega \phi \diff\fm
=\int_Q \int_{X_q} \phi \diff\fm_q \,\fq(\diff q).
\]

We deduce from \eqref{eq:1D} that, for $\fq$-almost every $q \in Q$, one of the following holds:
\begin{enumerate}[(1)]
\item $\fm_q(X_q \cap \Omega) =0$;
\item $\fm_q(X_q \cap \Omega) >0$ and $X_q \subset \bb_{\gamma}^{-1}(0)$;
\item $\fm_q(X_q \cap \Omega) >0$, $X_q \not\subset \bb_{\gamma}^{-1}(0)$,
and $X_q \cap \bb_{\gamma}^{-1}(0)$ is a singleton which is the unique barycenter of $\mu_q:=\rho \fm_q$.
\end{enumerate}
We remark that $\mu_q$ is not necessarily a probability measure and that the support of $\mu_q$ is included in $\Omega$.
We infer from
\[
\int_Q \fm_q \bigl( \bb_{\gamma}^{-1}(0) \cap \Omega \bigr) \,\fq(\diff q)
=\fm \bigl( \bb_{\gamma}^{-1}(0) \cap \Omega \bigr)=0
\]
that the case (2) is $\fq$-negligible.
In the case (3), we can identify $X_q$ and $\bb_{\gamma}(X_q) \subset \R$ via $\bb_{\gamma}$.
Then, if we write $\mu_q =w_q \diff x$ through this identification, it follows from Proposition~\ref{pr:proj} (or, more directly, from the H\"older inequality as in Lemma~\ref{lm:beta}; see also the proof of Theorem~\ref{th:rigid} below) that $w_q^{1/(\beta -1)}|_{w_q^{-1}((0,\infty))}$ ($\beta>N$), $\log w_q$ ($\beta=\infty$), or $-w_q^{1/(\beta -1)}$ ($\beta<-1$) is concave.

Therefore, applying Lemmas~\ref{lm:Grunbaum1d}, \ref{lm:1d_neg} (with $N=\beta$) to each normalized needle $(X_q,\mu_q(X_q)^{-1} \mu_q)$ satisfying (3), we obtain
\begin{align}
\mu_q \bigl( \{ x \in X_q \mid \bb_{\gamma}(x) \le 0 \} \bigr)
&\ge \biggl( \frac{\beta}{\beta +1} \biggr)^\beta \cdot \mu_q(X_q), \label{eq:mu_q<}\\
\mu_q \bigl( \{ x \in X_q \mid \bb_{\gamma}(x) \ge 0 \} \bigr)
&\ge \biggl( \frac{\beta}{\beta +1} \biggr)^\beta \cdot \mu_q(X_q) \nonumber
\end{align}
when $\beta>N$ or $\beta<-1$, and
\begin{align*}
\mu_q \bigl( \{ x \in X_q \mid \bb_{\gamma}(x) \le 0 \} \bigr)
&\ge \e^{-1} \cdot \mu_q(X_q), \\
\mu_q \bigl( \{ x \in X_q \mid \bb_{\gamma}(x) \ge 0 \} \bigr)
&\ge \e^{-1} \cdot \mu_q(X_q)
\end{align*}
when $\beta=\infty$.
Integrating these inequalities in $q$ with respect to $\fq$ completes the proof of Theorem~\ref{thm:main}, 
since $\int_Q \mu_q(A) \,\fq(\diff q) =\mu(A)$ for measurable sets $A \subset X$.

\begin{remark}\label{rm:1vs2}
On the one hand, the second proof provides a more detailed control at the level of needles (under $\mu \in \mathcal{P}^2(X)$).
On the other hand, in the first proof, we have a direct connection between $X$ and $\R$ via $\bb_{\gamma}$.
To consider the rigidity problem, we need both viewpoints to integrate the one-dimensional information on needles into a global picture of $X$.
\end{remark}

\subsection{Rigidity}\label{ssc:rigid}

Now, with the help of Lemma~\ref{lm:=_pos}, we study when equality holds in the generalized Gr\"unbaum's inequality \eqref{eq:main} with $\beta>N$.
Put $\Omega :=\supp(\mu)$ as in the previous subsection, and recall that $\bb_\gamma(x_0)=0$.

\begin{theorem}[Rigidity; $N>1$]\label{th:rigid}
In the situation of Theorem~$\ref{thm:main}$, let $(Q,\fq)$, $(X_q,\fm_q)$, and $\mu_q=\rho\fm_q=w_q \diff x$ be the elements of the localization as in Subsection~$\ref{ssc:2nd}$.
Suppose $\beta>N$ and that equality holds in \eqref{eq:main}.
Then, there exists $c>0$ such that, for $\fq$-almost every needle $q \in Q$, we have
$\bb_{\gamma}(X_q \cap \Omega)=[-\beta/c,1/c]$ and
\begin{equation}\label{eq:rigidneedles}
\frac{1}{\mu_q(X_q)} w_q(x)
 = c\biggl( \frac{\beta}{\beta +1} \biggr)^\beta
 \biggl( 1+\frac{c}{\beta} \bb_\gamma(x) \biggr)^{\beta -1}
\end{equation}
for all $x \in X_q \cap \Omega$.
Moreover, regarding $A_t := \bb_{\gamma}^{-1}(t) \cap \Omega$ as a set in $Y$, we have
\begin{equation}\label{eq:rigid_A_t}
\fn(A_t) =\biggl( \frac{\beta +ct}{\beta +1} \biggr)^{N-1}
 \fn(A_{1/c}), \qquad
[\rho\fn](A_t) =\frac{c\beta}{\beta +1}
 \biggl( \frac{\beta +ct}{\beta +1} \biggr)^{\beta -1}
\end{equation}
for all $t \in [-\beta/c,1/c]$.
\end{theorem}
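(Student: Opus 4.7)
The plan is to combine the two proofs of Theorem~\ref{thm:main} from Subsections~\ref{ssc:1st} and \ref{ssc:2nd} with the one-dimensional rigidity Lemma~\ref{lm:=_pos}, extracting information both needle-by-needle and globally.

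First, I would use the localization proof: the identity $\mu(\{\bb_\gamma \le 0\}) = \int_Q\mu_q(\{\bb_\gamma \le 0\})\,\fq(\diff q)$, together with the per-needle inequality $\mu_q(\{\bb_\gamma \le 0\}) \ge (\beta/(\beta+1))^\beta\mu_q(X_q)$ and the normalisation $\int_Q\mu_q(X_q)\,\fq(\diff q) = 1$, shows that equality in \eqref{eq:main} forces equality in the needle inequality for $\fq$-almost every $q$. Applying Lemma~\ref{lm:=_pos} to each such needle then yields a parameter $c_q > 0$ with $\bb_\gamma(X_q\cap\Omega) = [-\beta/c_q, 1/c_q]$ and the formula \eqref{eq:rigidneedles} for $w_q/\mu_q(X_q)$, but \emph{a priori} with a $q$-dependent constant $c_q$.

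Next, to establish the constancy $c_q \equiv c$, I would invoke the projection proof: by Proposition~\ref{pr:proj}\eqref{key_pos}, the marginal density $w$ of $\bb_\gamma \# \mu$ satisfies that $w^{1/(\beta-1)}$ is concave, and equality in \eqref{eq:main} becomes equality in the one-dimensional Gr\"unbaum inequality for $w$, so Lemma~\ref{lm:=_pos} forces $w$ into the cone form on $[-\beta/c, 1/c]$ for a unique $c>0$. Since the disintegration gives $w(t) = \int_Q w_q(t)\,\fq(\diff q)$, evaluating at $t = 0$ and comparing derivatives there yields
\[
\int_Q \mu_q(X_q)\, c_q \,\fq(\diff q) = c, \qquad \int_Q \mu_q(X_q)\, c_q^2 \,\fq(\diff q) = c^2 ;
\]
combined with $\int_Q\mu_q(X_q)\,\fq(\diff q) = 1$, the Cauchy--Schwarz inequality forces $c_q = c$ for $\fq$-a.e.\ $q$ with $\mu_q(X_q) > 0$, giving the first assertion of the theorem.

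Finally, for the slice formulas \eqref{eq:rigid_A_t}: the second is immediate, since $[\rho\fn](A_t) = \int_Y \rho(t,y)\,\fn(\diff y) = w(t)$ and substituting the cone form of $w$ produces the stated expression. For the first formula, the constancy $c_q \equiv c$ means all needles share the $\bb_\gamma$-range $[-\beta/c, 1/c]$, so $\Omega$ is foliated by needles joining $\bb_\gamma^{-1}(-\beta/c)$ to $\bb_\gamma^{-1}(1/c)$; through the splitting $X = \R \times Y$ this produces a cone-like structure in which the slice $A_t \subset Y$ arises as the image of $A_{1/c}$ under a geodesic contraction toward an apex by the affine factor $s_t := (\beta + ct)/(\beta + 1)$. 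The Brunn--Minkowski inequality on the $\RCD(0, N-1)$-space $Y$ gives $\fn(A_t)^{1/(N-1)} \ge s_t\, \fn(A_{1/c})^{1/(N-1)}$, and the rigidity forced by the cone-like alignment of all needles upgrades this to $\fn(A_t) = s_t^{N-1} \fn(A_{1/c})$. The main obstacle will be carrying out this last step rigorously: extracting the apex and cone geometry from the abstract CM localisation and invoking the appropriate Brunn--Minkowski rigidity on the fibre $Y$, both of which require a careful interplay between the localisation and the splitting.
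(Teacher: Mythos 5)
Your first two steps are sound and essentially parallel the paper: you correctly combine the equality case of the per-needle estimate \eqref{eq:mu_q<} with the global one-dimensional rigidity (Lemma~\ref{lm:=_pos} applied to $w=\bb_\gamma\#\mu$) to get needle-wise constants $c_q$, and you correctly reduce the constancy $c_q\equiv c$ to a moment computation. Your route to constancy differs from the paper's: you match $w(0)=\int_Q w_q(0)\,\fq$ and $w'(0)=\int_Q w_q'(0)\,\fq$ and conclude $\int(c_q-c)^2\,\mu_q(X_q)\fq(\diff q)=0$, whereas the paper first deduces $c_q\ge c$ from the support inclusion $[-\beta/c_q,1/c_q]\subset[-\beta/c,1/c]$ and then compares densities at the two points $0$ and $1/c$. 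Your version works, but the derivative identity is not automatic: you need to justify differentiating under the integral. Since $\beta>1$, each $w_q$ is nondecreasing, so Fatou gives $w'(0)\ge\int_Q w_q'(0)\,\fq$, which combines with Jensen $\int\mu_q(X_q)c_q^2\,\fq\ge\bigl(\int\mu_q(X_q)c_q\,\fq\bigr)^2=c^2$ to force equality; this is a small but necessary repair. Your treatment of the second slice identity $[\rho\fn](A_t)=w(t)$ is correct and is exactly what the paper does.

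The genuine gap is in your derivation of the first slice formula $\fn(A_t)=\bigl((\beta+ct)/(\beta+1)\bigr)^{N-1}\fn(A_{1/c})$. You posit that $A_t$ is a ``geodesic contraction of $A_{1/c}$ toward an apex by factor $s_t$'' and then invoke Brunn--Minkowski rigidity on $Y$. This picture is not available: the localization gives a family of needles, but it is not known that their lower endpoints coincide in a single apex point --- indeed the paper explicitly leaves exactly this as an open question in Remark~\ref{rm:rigid}. Without an apex, the Brunn--Minkowski set-up (one factor being a point, the other $A_{1/c}$, with $A_t$ containing the $s_t$-midpoints of \emph{all} pairs) does not apply; the needles only give one specific pairing. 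The paper sidesteps the geometric picture entirely: it writes $w_q=\rho\,\zeta_q$ with $\fm_q=\zeta_q\diff x$, observes that $\zeta_q^{1/(N-1)}$ is concave (from $\CD(0,N)$ on the needle) and $\rho^{1/(\beta-N)}$ is concave along the needle, uses the H\"older inequality to combine them into concavity of $w_q^{1/(\beta-1)}$, and then reads off from \eqref{eq:rigidneedles} that equality holds in H\"older, forcing \emph{both} factors to be affine and to vanish at $-\beta/c$. This gives $\zeta_q(t)=\zeta_q(1/c)\,\bigl((\beta+ct)/(\beta+1)\bigr)^{N-1}$, and integrating over $q$ via the disintegration identity $\fn(A_t)=\int_Q\zeta_q(t)\,\fq(\diff q)$ yields the stated formula without any cone-structure input. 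You would need to replace your Brunn--Minkowski step with this (or an equivalent) factorization argument.
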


\begin{proof}
We shall analyze by combining both of the two proofs above.
On the one hand, by the first proof in Subsection~\ref{ssc:1st} and Lemma~\ref{lm:=_pos} (with $N=\beta$), we deduce that $\bb_{\gamma}\# \mu=w \diff x$ satisfies $\supp(w)=[-\beta/c,1/c]$ and
\begin{equation*}
w(x) =c \biggl( \frac{\beta}{\beta +1} \biggr)^\beta \biggl( 1+\frac{c}{\beta}x \biggr)^{\beta -1}
\end{equation*}
for some $c>0$.
On the other hand, in the second proof, $\fq$-almost every needle necessarily satisfies equality in \eqref{eq:mu_q<}, thereby, we obtain $\bb_{\gamma}(X_q \cap \Omega)=[-\beta/c_q,1/c_q]$ and
\[
\frac{1}{\mu_q(X_q)} w_q(x) = c_q \biggl( \frac{\beta}{\beta +1} \biggr)^\beta \biggl( 1+\frac{c_q}{\beta}x \biggr)^{\beta -1}
\]
for some $c_q>0$ and all $x \in X_q \cap \Omega$, where we identified $X_q$ and $\bb_{\gamma}(X_q)$ via $\bb_{\gamma}$ as in the previous subsection.
For $\fq$-almost every $q \in Q$, since $[-\beta/c_q,1/c_q] \subset [-\beta/c,1/c]$, we have $c_q \ge c$.
Moreover, by comparing the density at $0$ and $1/c$, we infer that $c_q=c$ necessarily holds.
Therefore, for $\fq$-almost every $q \in Q$,
we obtain $\bb_{\gamma}(X_q \cap \Omega)=[-\beta/c,1/c]$ and \eqref{eq:rigidneedles}.
The latter equation in \eqref{eq:rigid_A_t} is now straightforward by integrating \eqref{eq:rigidneedles} in $q \in Q$:
\begin{align*}
[\rho\fn](A_t)
&=\int_Q w_q(t) \,\fq(\diff q)
=c\biggl( \frac{\beta}{\beta +1} \biggr)^{\beta}
\biggl( 1+\frac{c}{\beta}t \biggr)^{\beta -1} \int_Q \mu_q(X_q) \,\fq(\diff q) \\
&= c\biggl( \frac{\beta}{\beta +1} \biggr)^{\beta}
\biggl( 1+\frac{c}{\beta}t \biggr)^{\beta -1}.
\end{align*}

Next, we have a closer look on the concavity of $w_q^{1/(\beta-1)}$.
Letting $\fm_q =\zeta_q \diff x$, since $(X_q,\fm_q)$ is a $\CD(0,N)$-space, we know that $\zeta_q^{1/(N-1)}$ is concave.
Combining this with the concavity of $\rho^{1/(\beta -N)}$ along $X_q$ and noting $w_q=\rho \zeta_q$, by a similar calculation to Lemma~\ref{lm:beta}, we obtain
\begin{align*}
w_q \bigl( (1-\lambda)x +\lambda y \bigr)^{1/(\beta -1)}
&\ge \Bigl( (1-\lambda)\rho(x)^{1/(\beta -N)}
 +\lambda \rho(y)^{1/(\beta -N)} \Bigr)^{(\beta -N)/(\beta -1)} \\
&\quad \times \Bigl( (1-\lambda)\zeta_q(x)^{1/(N-1)}
 +\lambda \zeta_q(y)^{1/(N-1)} \Bigr)^{(N-1)/(\beta -1)} \\
&\ge (1-\lambda)[\rho\zeta_q](x)^{1/(\beta -1)}
 +\lambda [\rho\zeta_q](y)^{1/(\beta -1)} \\
&= (1-\lambda) w_q(x)^{1/(\beta -1)} +\lambda w_q(y)^{1/(\beta -1)}
\end{align*}
for all $x,y \in X_q \cap \Omega$ and $\lambda \in (0,1)$.
Comparing this with \eqref{eq:rigidneedles},
we find that $\rho^{1/(\beta -N)}$ and $\zeta_q^{1/(N-1)}$ are both affine along $X_q \cap \Omega$ and vanish (only) at $-\beta/c$
(precisely, $\rho^{1/(\beta -N)}$ and $\zeta_q^{1/(N-1)}$ on $[-\beta/c,1/c]$ are the restrictions of such affine functions).
Therefore, $A_t$ satisfies
\begin{equation*}
\begin{split}
\fn(A_t) &= \int_Q \zeta_q(t) \,\fq(\diff q)
 =\biggl( \frac{t+(\beta/c)}{(1/c)+(\beta/c)} \biggr)^{N-1}
 \int_Q \zeta_q(1/c) \,\fq(\diff q) \\
&= \biggl( \frac{ct+\beta}{1+\beta} \biggr)^{N-1} \fn(A_{1/c}),
\end{split}
\end{equation*}
which is the former equation in \eqref{eq:rigid_A_t}.
%
\if0
We shall show that $A_{-\beta/c}$ consists of exactly one point,
by applying a rigidity result for the Brunn--Minkowski inequality (under lower Ricci curvature bounds) in \cite{BK}.
Take a point $z \in A_{-\beta/c}$ admitting a convex neighborhood $U \subset X$ such that
\[
Q' :=\{ q \in Q \mid X_q \cap A_{-\beta/c} \subset U \}
\]
satisfies $\fq(Q')>0$.
Set
\[
A'_t := \bigcup_{q \in Q'} (A_t \cap X_q)
\]
and note that we have
\[
\fn(A'_t) =\biggl( \frac{ct+\beta}{1+\beta} \biggr)^{n-1} \fn(A'_{1/c})
\]
in the same way as \eqref{eq:BM=}.
Note that, by construction, the transport along needles $X_q$ is $L^1$-optimal (see the proof of \cite[Theorem~5.1]{CM}).
Now, for small $\ve >0$, we consider a map $T$ sending $x \in A_t \cap X_q$ with $q \in Q'$ and $t \in [-\beta/c,(\ve-\beta)/c]$ to the point $y$ on the same needle $X_q$ such that $\dist_X(x,y)=\ve$ and $\bb_\gamma(y) >\bb_\gamma(x)$.
The map $T$ is a restriction of the transport along needles $X_q$, and is also $L^2$-optimal by \cite[Lemma~4.1]{CM}.
Moreover, arguing as in the proof of \cite[Theorem~4.2]{CM}, we see that restrictions of this transport to small portions are $L^2$-optimal as well.
Thus, we can apply \cite[Theorem~4.2(iii)]{BK} to see that such a transport is written as a homothety in $\R^n$ (or in $\R^{n-1}$ if we project it to $Y$).
Combining this observation with $\fn(A'_{-\beta/c})=0$, we deduce that $A'_{-\beta/c}$ is a singleton.
Since $A_{-\beta/c}$ is a convex set, we find that $A_{-\beta/c}$ itself consists of one point.
Denote $A_{-\beta/c}=\{z\}$.
Then the transport along the needles $X_q$ is clearly $L^2$-optimal since it is a part of the contraction to $z$.
Therefore, applying the same theorem in \cite{BK} (locally) yields that $\supp(\mu)$ is flat and (locally) isometric to a convex cone in $\R^n$ with $z$ as the apex and $A_{1/c}$ as the base.
\fi
\end{proof}

\begin{remark}\label{rm:rigid}
It seems plausible that, under the hypothesis in Theorem~\ref{th:rigid}, $N=n$ holds and $\Omega$ is isometric to a convex cone in $\R^n$ with $A_{-\beta/c}$ as the apex and $A_{1/c}$ as the base.
Indeed, in the Euclidean case with the Lebesgue measure, it is known that equality in Gr\"unbaum's inequality is achieved only by cones \cite{Gr00}.
Then, a key step is to show that $A_{-\beta/c}$ is a singleton.
A rigidity result \cite[Theorem~4.2]{BK} for the Brunn--Minkowski inequality seems to play a role, however, it is concerned with $L^2$-optimal transports while the transport along needles is $L^1$-optimal.
\end{remark}

\if0
\js{I'm trying to show this, but I'm stuck at the moment. I'm trying the following way (please correct me if I'm wrong). I would like to use \cite[Theorem 1.1]{de2016volume}. In order to do so, from an intuitive point of view, I'd like to show that for $\fq$-almost every $q,q' \in Q$, we have $\bb_{\gamma,q}^{-1}(-\beta/c)=\bb_{\gamma,q'}^{-1}(-\beta/c)$ where $\bb_{\gamma,q}^{-1}$ stands for the inverse of the restriction of $\bb_{\gamma}$ to $X_q \cap \supp(\mu)$. If this holds true, then we could define $O\in \supp(\mu)$ as the ($\fq$-almost sure) starting intersection point of needles. Then for $R>r>0$, we would get by identification $B_R(O) = \bigcup_{q\in Q} [-\beta/c,R-\beta/c)$, and therefore by the desintegration of $\mu$ and by \eqref{eq:rigidneedles} we would obtain that $\mu(B_R(O)) = \left( \frac{R}{r}\right)^N\mu(B_r(O))$, and hence \cite[Theorem 1.1]{de2016volume} would finally apply.\\
Of course, needles do not intersect since they are a partition, so more rigorously, I'd like to show that $\bb_{\gamma}^{-1}(-\beta/c)\cap \Omega$ is reduced to a point $O$, and then the needles, which are open geodesics (i.e. $(-\beta/c,1/c)$ in the identification) could be left extended to this common point.}
\js{I think we can do it by using the Gigli-De Philippis 'volume cone to metric cone' theorem\cite[Theorem 1.1]{de2016volume}, let us do it:}
We want to show that $\left(\Omega,\dist_\Omega,\mu\right)$ is a cone, where $\Omega =\supp \mu$ and $\dist_\Omega$ is the restriction of $\dist_X$ to $\Omega$. In order to do so, we are going to apply the 'volume cone to metric cone' theorem (see \cite[Theorem 1.1]{de2016volume}). Therefore, it is enough to find $O\in\Omega$ and $R>r>0$ such that $\mu\left(B_R(O)\right) = \left(\frac{R}{r}\right)^\beta \mu\left(B_r(O)\right)$ where $B_R(O)$ denotes the ball in $X$ with center $O$ and radius $R$.
Let us choose $O:=X_{q_0}\cap b_\gamma^{-1}(-\frac{\beta}{c})$, where $X_{q_0}$ is the needle containing the barycenter $x_0$. (The intuition is that $O$ is the pole of the cone). By using the desintegration of $\mu$ along needles, one can see that
\begin{align*}
    \mu\left( B_R(O)\right) &= \int_Q \mu_q\left( B_R(O) \right) \diff\fq(q)\\
    &= \int_Q \int_{\frac{-\beta}{c}}^{R-\frac{\beta}{c}} \rho_q(x)\diff x \diff \fq(q)\\
    &= c \biggl( \frac{\beta}{\beta +1} \biggr)^\beta  \int_Q \mu_q(X_q)\int_{\frac{-\beta}{c}}^{R-\frac{\beta}{c}} \biggl( 1+\frac{c}{\beta}x \biggr)^{\beta -1}\diff x \diff \fq(q)\\
    &=  \biggl( \frac{c}{\beta +1} \biggr)^\beta R^\beta \int_Q \mu_q(X_q) \diff \fq(q)\\
    &=  \biggl( \frac{c}{\beta +1} \biggr)^\beta R^\beta,
\end{align*}
where at the second line, we used the identification between $X_q$ and the straight line $\gamma$, at the third line we used \eqref{eq:rigidneedles}, and we used that $\int_Q \mu_q(X_q) \diff \fq(q)= \mu(\Omega)=1$ at the last line. In a similar way, one can get that $\mu( B_r(O))=( \frac{c}{\beta +1} )^\beta r^\beta$, and therefore we are under the conditions of application of \cite[Theorem 1.1]{de2016volume}.
\js{I think that by construction, the second case in the theorem (where the sphere $S_{\frac{R}{2}}(O)$ contains exactly $2$ points) cannot happen in our setting, but I have to think a bit about it. } Therefore, either $(\Omega,\dist_\Omega)$ is isometric to the one dimensional model presented in Section \ref{sc:1d}, or $\beta\geq 2$ and there exists a RCD*$(\beta-2,\beta-1)$ space $(Z,\dist_Z,\fm_Z)$ with diameter less than $\pi$ and such that the ball $B_{\frac{1+\beta}{c}}(O)\subset \Omega$ is locally isometric to the ball $B_{\frac{1+\beta}{c}}(O_{\mathcal{C}})$ of the cone $\mathcal{C}$ built over $Z$.
\fi

\section{Gr\"{u}nbaum’s inequality for $N \in (-\infty,-1) \cup \{\infty\}$}\label{sc:neg}

In this section, we prove Theorem~\ref{th:neg} and the associated rigidity result (Theorem~\ref{th:rigid_neg}) concerning the cases of $N=\infty$ and $N<-1$, in a similar manner to the case of $N>1$ above.
Since the splitting theorem and localization for $\RCD(0,\infty)$-spaces are yet to be known, we restrict ourselves to weighted Riemannian manifolds.

Let $(M,g,\fm)$ be an $n$-dimensional, complete, weighted Riemannian manifold with $\Ric_{\infty} \ge 0$ or $\Ric_N \ge 0$ for $N<-1$, where $\fm=\e^{-\psi} \,\mathrm{vol}_g$ for some $\psi \in C^{\infty}(M)$.
By the monotonicity \eqref{eq:mono}, $\Ric_{\infty} \ge 0$ is a weaker condition than $\Ric_{N'} \ge 0$ with $N' \ge n$, and $\Ric_N \ge 0$ is even weaker.
In fact, under $\Ric_{\infty} \ge 0$, the boundedness of the diameter as in Lemma~\ref{lm:bdd} does not hold true.
An archetypal example is the Gaussian space $(\R,|\cdot|,\e^{-x^2/2} \diff x)$, which satisfies $\Ric_{\infty} \ge 1$.

We begin with a generalization of Proposition~\ref{pr:proj}.

\begin{proposition} \label{prop:proj_neg}
    Let $(M,g,\fm)$ be an $n$-dimensional complete weighted Riemannian manifold of $\Ric_N\geq 0$ for $N=\infty$ or $N<-1$.
    Assume that $\psi$ is bounded above and there is a straight line $\gamma\colon \R \lra M$.
    Then, there exists an $(n-1)$-dimensional weighted Riemannian manifold $(\Sigma,g_{\Sigma},\fn)$ with $\Ric_{N-1}\geq 0$ $(\Ric_{\infty}\geq 0$ if $N=\infty)$ and an isometry $T\colon (M,g) \lra (\R\times \Sigma,\tilde{g})$ such that
    \begin{itemize}
        \item $\tilde g$ is the product of the Euclidean metric on $\R$ and $g;$
        \item $\Pi_{\R}(T(x))=\bb_{\gamma}(x)$ for all $x\in M;$
        \item $T\#\fm =\diff x \otimes \fn$.
    \end{itemize}
    Moreover, let $\mu=\rho\fm$ be a probability measure on $M$ for some measurable function $\rho\colon M\lra [0,\infty)$ and put $\bb_{\gamma}\#\mu =w \diff x$.
    \begin{enumerate}[{\rm (i)}]
    \item\label{key'_neg}
    If $N<-1$ and $\rho^{1/(\beta -N)}$ is concave on $\rho^{-1}((0,\infty))$ for some $\beta \in (N,-1)$, then $w^{1/(\beta-1)}$ is convex.
    \item\label{key'_inf}
    If $N=\infty$ and $\log\rho$ is concave, then $\log w$ is concave.
    \end{enumerate}
\end{proposition}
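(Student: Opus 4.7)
The plan is to mirror the proof of Proposition~\ref{pr:proj}, replacing Gigli's splitting theorem and the Bacher--Sturm BBL by their weighted Riemannian counterparts. First I would invoke the isometric splitting: under $\Ric_\infty \ge 0$ with $\psi$ bounded from above, Lichnerowicz / Fang--Li--Zhang \cite{Li,FLZ} produce an isometry $T\colon (M,g)\lra (\R\times \Sigma,\tilde g)$ with $\Pi_\R\circ T=\bb_\gamma$, such that $\psi\circ T^{-1}$ is independent of the $\R$-factor; this yields $T\#\fm=\diff x\otimes \fn$ and leaves $(\Sigma,g_\Sigma,\fn)$ with $\Ric_\infty\ge 0$. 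For $N\in(-\infty,-1)$, Wylie's splitting \cite{Wy} gives the analogous decomposition, and the curvature transfers as $\Ric_{N-1}\ge 0$ on $(\Sigma,g_\Sigma,\fn)$.

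Having the product structure, I would identify $M$ with $\R\times\Sigma$ and write
\[
w(t)=\int_\Sigma \rho(t,y)\,\fn(\diff y).
\]
Fix $a<b$, $\lambda\in(0,1)$, set $t=(1-\lambda)a+\lambda b$, and define $f,g,h\colon\Sigma\lra[0,\infty)$ by $f(y):=\rho(a,y)$, $g(y):=\rho(b,y)$, $h(y):=\rho(t,y)$. The product decomposition ensures that for every minimal geodesic $\eta\colon[0,1]\lra\Sigma$ the curve $s\longmapsto((1-s)a+sb,\eta(s))$ is a minimal geodesic in $M$, so the pointwise hypothesis on $\rho$ transports verbatim to the triple $(f,g,h)$: for case \eqref{key'_neg}, $h(\eta(\lambda))^{1/(\beta-N)}\ge(1-\lambda)f(\eta(0))^{1/(\beta-N)}+\lambda g(\eta(1))^{1/(\beta-N)}$, and for case \eqref{key'_inf}, $\log h(\eta(\lambda))\ge (1-\lambda)\log f(\eta(0))+\lambda\log g(\eta(1))$.

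Next I would apply the appropriate Borell--Brascamp--Lieb inequality on $(\Sigma,g_\Sigma,\fn)$. For \eqref{key'_inf}, the Pr\'ekopa--Leindler inequality on a weighted Riemannian manifold with $\Ric_\infty\ge 0$ (a direct consequence of the $\CD(0,\infty)$ condition) yields
\[
w(t)=\int_\Sigma h\,\diff\fn\ \ge\ \Bigl(\int_\Sigma f\,\diff\fn\Bigr)^{1-\lambda}\Bigl(\int_\Sigma g\,\diff\fn\Bigr)^\lambda=w(a)^{1-\lambda}w(b)^\lambda,
\]
which is the concavity of $\log w$. For \eqref{key'_neg}, I would invoke the BBL inequality for weighted manifolds with $\Ric_{N-1}\ge 0$ and negative dimension parameter (Klartag \cite{Kl} or Ohta's $\CD(0,N-1)$ theory for $N-1<-2$) with parameter $p=1/(\beta-N)>0$. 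The output exponent is $p/(1+(N-1)p)=1/(\beta-1)$, so that
\[
w(t)\ \ge\ \Bigl((1-\lambda)w(a)^{1/(\beta-1)}+\lambda w(b)^{1/(\beta-1)}\Bigr)^{\beta-1},
\]
and since $\beta-1<0$, this is exactly the convexity of $w^{1/(\beta-1)}$.

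The main obstacle is simply locating and citing the correct Borell--Brascamp--Lieb / Pr\'ekopa--Leindler inequality on weighted Riemannian manifolds in each curvature regime: for $\Ric_\infty\ge 0$ the Pr\'ekopa--Leindler form is standard, but for $\Ric_N\ge 0$ with $N<-1$ one needs the negative-dimensional BBL (and to verify that the parameter arithmetic $p=1/(\beta-N),\ p/(1+(N-1)p)=1/(\beta-1)$ falls into its admissible range $p>-1/(N-1)$, which here it does since $\beta<-1$). Once the correct inequality is in hand, the passage from the pointwise hypothesis on $\rho$ to the claimed inequality for $w$ is a direct integration, just as in Proposition~\ref{pr:proj}.
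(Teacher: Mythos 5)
Your proposal tracks the paper's proof line by line: identical splitting references (Fang--Li--Zhang and Wylie), the same product identification with $w(t)=\int_\Sigma\rho(t,y)\,\fn(\diff y)$, the same $f,g,h$ set-up along product geodesics, Pr\'ekopa--Leindler for (ii), and a negative-parameter Borell--Brascamp--Lieb with $p=1/(\beta-N)$ and output exponent $1/(\beta-1)$ for (i). The one point where you and the paper diverge is precisely the ``main obstacle'' you flag: for (i) there is no off-the-shelf BBL statement for weighted Riemannian manifolds with $\Ric_{N-1}\ge 0$, $N-1<-2$, so the paper does not cite one. Instead it derives the needed inequality directly: take $\nu_0=f\,\fn/\int f\,\fn$, $\nu_1=g\,\fn/\int g\,\fn$, use the unique $L^2$-Wasserstein geodesic $\nu_\lambda=\zeta_\lambda\fn$ with transport maps $T_\lambda$, invoke the density estimate $\zeta_\lambda(T_\lambda(y))^{1/(1-N)}\le(1-\lambda)(f/\!\int f)^{1/(1-N)}+\lambda(g(T_1)/\!\int g)^{1/(1-N)}$ from Ohta's negative-dimension theory \cite[(4.7),(4.9)]{Oneg}, combine it with the concavity hypothesis on $\rho^{1/(\beta-N)}$ via a H\"older inequality, and integrate against $\nu_0$. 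That self-contained detour replaces the citation you leave as a placeholder; once you fill it in (either by this optimal-transport argument or by locating a statement you are willing to trust), your proof is the paper's proof. Your parameter arithmetic $p/(1+(N-1)p)=1/(\beta-1)$ and the sign-flip interpretation of ``$w^{1/(\beta-1)}$ convex'' for $\beta-1<0$ are both correct.
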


\begin{proof}  
The first part follows from \cite[Theorem~1.1]{FLZ} for $N=\infty$ and \cite[Corollary~1.3]{Wy} for $N<-1$ (in fact, it is available for $N<1$).
We identify $M$ with $\R\times \Sigma$, and then $w$ is given by
\[
w(t) =\int_\Sigma \rho(t,y) \,\fn(\diff y).
\]
As in the proof of Proposition~\ref{pr:proj}, we fix $a<b$ and $\lambda \in (0,1)$, and define
\[
h(y) := \rho\bigl( (1-\lambda)a +\lambda b,y \bigr), \qquad
f(y) := \rho(a,y), \qquad
g(y) := \rho(b,y)
\]
for $y \in \Sigma$.
Recall also that, for any minimal geodesic $\eta\colon [0,1] \lra \Sigma$,
\[
\lambda \,\longmapsto\, \bigl( (1-\lambda)a +\lambda b,\eta(\lambda) \bigr) \in \R \times \Sigma =M
\]
is a minimal geodesic.

\eqref{key'_neg}
In this case, we make use of a generalization of the Borell--Brascamp--Lieb inequality in \cite{bacher2010borell,CMS01} to $N<-1$ (the estimate below also works for $N<\beta<1$).
We give an outline for thoroughness.
Set
\[
\nu_0 :=\frac{f}{\int_\Sigma f \diff\fn} \fn, \qquad
\nu_1 :=\frac{g}{\int_\Sigma g \diff\fn} \fn.
\]
Then, there is a unique $L^2$-Wasserstein geodesic $(\nu_{\lambda})_{\lambda \in [0,1]}$ from $\nu_0$ to $\nu_1$, and we can write $\nu_{\lambda}=\zeta_{\lambda} \fn$.
On the one hand, it follows from $\Ric_{N-1} \ge 0$ of $(\Sigma,g_{\Sigma},\fn)$ that
\[
\zeta_{\lambda} \bigl( T_{\lambda}(y) \bigr)^{1/(1-N)}
\le (1-\lambda) \biggl( \frac{f(y)}{\int_\Sigma f \diff\fn} \biggr)^{1/(1-N)}
 +\lambda \biggl( \frac{g(T_1(y))}{\int_\Sigma g \diff\fn} \biggr)^{1/(1-N)}
\]
for $\nu_0$-almost every $y \in \Sigma$
(by \cite[(4.7), (4.9)]{Oneg}), where $T_{\lambda}$ denotes the (unique) optimal transport map from $\nu_0$ to $\nu_{\lambda}$ (thereby $T_\lambda \# \nu_0 =\nu_\lambda$).
On the other hand, the assumed concavity of $\rho^{1/(\beta -N)}$ yields
\[
 h\bigl( T_{\lambda}(y) \bigr)^{1/(\beta-N)} \ge (1-\lambda) f(y)^{1/(\beta-N)} +\lambda g\bigl( T_1(y) \bigr)^{1/(\beta-N)}. 
\]
Thus, we have
\begin{align*}
\int_\Sigma h \diff\fn
&\ge \int_\Sigma \frac{h}{\zeta_\lambda} \diff\nu_\lambda
 = \int_\Sigma \frac{h(T_\lambda)}{\zeta_\lambda (T_\lambda)} \diff\nu_0 \\
&\ge \int_\Sigma \bigl( (1-\lambda) f^{1/(\beta-N)} +\lambda g(T_1)^{1/(\beta-N)} \bigr)^{\beta -N} \\
&\qquad \times \biggl\{ (1-\lambda) \biggl( \frac{f}{\int_\Sigma f \diff\fn} \biggr)^{1/(1-N)}
 +\lambda \biggl( \frac{g(T_1)}{\int_\Sigma g \diff\fn} \biggr)^{1/(1-N)} \biggr\}^{N-1} \diff\nu_0 \\
&\ge \biggl\{ (1-\lambda) \biggl( \int_\Sigma f \diff\fn \biggr)^{1/(\beta -1)} +\lambda \biggl( \int_\Sigma g \diff\fn \biggr)^{1/(\beta -1)} \biggr\}^{\beta -1}
\end{align*}
by integrating the H\"older inequality
\begin{align*}
&\biggl\{ (1-\lambda) \biggl( \frac{f}{\int_\Sigma f \diff\fn} \biggr)^{1/(1-N)}
 +\lambda \biggl( \frac{g(T_1)}{\int_\Sigma g \diff\fn} \biggr)^{1/(1-N)} \biggr\}^{1-N} \\
&\le \bigl( (1-\lambda) f^{1/(\beta-N)} +\lambda g(T_1)^{1/(\beta-N)} \bigr)^{\beta -N} \\
&\qquad \times
\biggl\{ (1-\lambda) \biggl( \int_\Sigma f \diff\fn \biggr)^{1/(\beta -1)} +\lambda \biggl( \int_\Sigma g \diff\fn \biggr)^{1/(\beta -1)} \biggr\}^{1-\beta}.
\end{align*}
Since $\beta -1<0$, this yields the convexity of $w^{1/(\beta -1)}$.

\eqref{key'_inf}
In this case, we can apply the Pr\'ekopa--Leindler inequality from \cite[Theorem~1.4]{CMS06} (with $\lambda=0$).
From the concavity of $\log\rho$, for any minimal geodesic $\eta\colon [0,1]\lra \Sigma$, we obtain
\[
h\bigl( \eta(\lambda) \bigr) 
\ge f\bigl( \eta(0) \bigr)^{1-\lambda} g\bigl( \eta(1) \bigr)^{\lambda}.
\]
Hence, the Pr\'ekopa--Leindler inequality on $(\Sigma,g_{\Sigma},\fn)$ (under $\Ric_{\infty} \ge 0$) implies
\[
\int_\Sigma h \diff\fn
\ge \biggl(\int_\Sigma f \diff\fn \biggr)^{1-\lambda} \biggl(\int_\Sigma g \diff\fn \biggr)^{\lambda},
\]
which shows the concavity of $\log w$.
\end{proof}

\begin{remark}\label{rm:psi}
The assumption $\sup_M \psi<\infty$ is indeed necessary for the splitting.
One can easily find a counter-example by considering the squared distance function $\psi=cd^2(x_0,\cdot)$ for some large $c$ in hyperbolic spaces.
It is also possible to slightly weaken the boundedness into the so-called \emph{$\psi$-completeness} condition thanks to \cite[Corollary~6.7]{Wy}.
\end{remark}

Thus, the one-dimensional analysis in Section~\ref{sc:1d} leads us to generalizations of Gr\"unbaum's inequality, as stated in Theorem~\ref{th:neg}.
Again, we give two proofs of Theorem~\ref{th:neg}, one via localization and one without.

\subsection{Proof without localization}\label{ssc:proof1_neg}

It follows from Proposition~\ref{prop:proj_neg}\eqref{key'_inf} that, in the situation of Theorem~\ref{th:neg}\eqref{main2_inf}, $(\supp(w),w \diff x)$ is a $\CD(0,\infty)$-space.
Then Lemma~\ref{lm:1d_neg}\eqref{1d_inf} yields the claim.
Similarly, Theorem~\ref{th:neg}\eqref{main2_neg} follows from Proposition~\ref{prop:proj_neg}\eqref{key'_neg} and Lemma~\ref{lm:1d_neg}\eqref{1d_neg}.

\subsection{Proof via localization}\label{ssc:proof2_neg}

The localization as described in Subsection~\ref{ssc:2nd} has been known by Klartag \cite[Theorem~1.2]{Kl} in this smooth setting.
Then, with Lemma~\ref{lm:1d_neg}, we can prove Theorem~\ref{th:neg} in the same way as Subsection~\ref{ssc:2nd}, under the additional assumption $\mu \in \mathcal{P}^2(M)$.

\begin{remark}\label{rm:N}
As is natural from the monotonicity \eqref{eq:mono} of the weighted Ricci curvature, we have
\[ \biggl( \frac{N}{N+1} \biggr)^N \le \e^{-1} \le \biggl( \frac{N'}{N'+1} \biggr)^{N'} \]
for $N \in (-\infty,-1)$ and $N' \in (1,\infty)$.
Note also that $\lim_{N \uparrow -1} (N/(N+1))^N =0$, thereby, our generalized Gr\"unbaum's inequality may not have a version for, say $N \in [-1,0]$.
\end{remark}

\subsection{Rigidity}\label{ssc:rigid_neg}

The rigidity result (Theorem~\ref{th:rigid}) can also be generalized to the current setting.
Recall that $\Omega=\supp(\mu)$ and $\bb_\gamma(x_0)=0$.

\begin{theorem}[Rigidity; $N<-2$, $N=\infty$]\label{th:rigid_neg}
In the situation of Theorem~$\ref{th:neg}$,
assume $\mu \in \mathcal{P}^2(M)$ and let $(Q,\fq)$, $(M_q,\fm_q)$, and $\mu_q=\rho\fm_q=w_q \diff x$ be the elements of the localization as in Subsection~$\ref{ssc:2nd}$.
\begin{enumerate}[{\rm (i)}]
\item\label{rigid_inf}
Suppose $N=\infty$ and that equality holds in \eqref{eq:main2_inf}.
Then there exists $c>0$ such that, for $\fq$-almost every needle $q \in Q$, we have
$\bb_{\gamma}(M_q \cap \Omega)=(-\infty,1/c]$ and
\[
\frac{1}{\mu_q(M_q)} w_q(x)
 = c\exp\bigl( c\bb_\gamma(x) -1 \bigr)
\]
for all $x \in M_q \cap \Omega$.
Moreover, regarding $A_t := \bb_{\gamma}^{-1}(t) \cap \Omega$ as a set in $\Sigma$, we have
\[
[\rho\fn](A_t) =c\e^{ct-1}
\]
for all $t \in (-\infty,1/c]$.

\item\label{rigid_neg}
Suppose $N<\beta<-2$ and that equality holds in \eqref{eq:main2_neg}.
Then there exists $c>0$ such that, for $\fq$-almost every needle $q \in Q$, we have
$\bb_{\gamma}(M_q \cap \Omega)=(-\infty,1/c]$ and
\[
\frac{1}{\mu_q(M_q)} w_q(x)
 = c \biggl( \frac{\beta}{\beta +1} \biggr)^\beta \biggl( 1+\frac{c}{\beta}\bb_{\gamma}(x) \biggr)^{\beta -1}
\]
for all $x \in M_q \cap \Omega$.
Moreover, we have
\[
[\rho\fn](A_t)
= \frac{c\beta}{\beta +1} \biggl( \frac{\beta +ct}{\beta +1} \biggr)^{\beta -1}
\]
for all $t \in (-\infty,1/c]$.
\end{enumerate}
\end{theorem}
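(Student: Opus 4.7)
The plan is to mirror the argument of Theorem~\ref{th:rigid} for the $N>1$ case, combining the first proof via the splitting theorem (Subsection~\ref{ssc:proof1_neg}) with the second proof via Klartag's localization (Subsection~\ref{ssc:proof2_neg}), and feeding both into the one-dimensional rigidity of Lemma~\ref{lm:=_inf}. I would treat parts \eqref{rigid_inf} and \eqref{rigid_neg} in parallel, since the one-dimensional rigidity for the exponential ($N=\infty$) and power ($N<-1$) profiles proceeds by the same bookkeeping.

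First, after translating the straight line so that $\bb_\gamma(x_0)=0$, I would apply Proposition~\ref{prop:proj_neg} together with Lemma~\ref{lm:0mean} to view $\bb_\gamma\#\mu=w\,\diff x$ as a Borel probability measure on $\R$ of barycenter $0$, whose density $w$ satisfies the concavity of $\log w$ when $N=\infty$, respectively the convexity of $w^{1/(\beta-1)}$ when $N<\beta<-2$. The hypothesis that equality holds in \eqref{eq:main2_inf} or \eqref{eq:main2_neg} then saturates the corresponding one-dimensional bound of Lemma~\ref{lm:1d_neg} for $w$, so Lemma~\ref{lm:=_inf} pins down $\supp(w)=(-\infty,1/c]$ for some $c>0$ and gives the explicit exponential or power-type formula for $w$. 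The identity $[\rho\fn](A_t)=w(t)$ claimed in the theorem follows immediately from the product decomposition $\fm=\diff x\otimes\fn$ supplied by Proposition~\ref{prop:proj_neg}.

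Next I would invoke Klartag's localization~\cite{Kl}, available because $\mu\in\mathcal{P}^2(M)$, using $\bb_\gamma\rho$ as the conditional function (as in Subsection~\ref{ssc:2nd}, legitimate by Lemma~\ref{lm:0mean}). This disintegrates $\mu$ into $\{(M_q,\fm_q)\}_{q\in Q}$ whose $\fq$-a.e.\ needle is one-dimensional of the appropriate curvature-dimension type and satisfies $\int_{M_q}\bb_\gamma\rho\,\diff\fm_q=0$. After identifying $M_q\cap\Omega$ with an interval via $\bb_\gamma$ and writing $\mu_q=w_q\,\diff x$, the normalized density $w_q/\mu_q(M_q)$ satisfies the required one-dimensional concavity/convexity (arguing as in the proof of Theorem~\ref{th:rigid}, by combining $\Ric_{N-1}\geq 0$ along the needle with the assumed concavity of $\rho^{1/(\beta-N)}$, or the concavity of $\log\rho$). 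Integrating the one-dimensional inequalities of Lemma~\ref{lm:1d_neg} against $\fq$ returns the global bound, so equality globally forces equality on $\fq$-a.e.\ needle with $\mu_q(M_q)>0$. Applying Lemma~\ref{lm:=_inf} needlewise gives $\bb_\gamma(M_q\cap\Omega)=(-\infty,1/c_q]$ together with the explicit formula for $w_q/\mu_q(M_q)$ with a needle-dependent parameter $c_q>0$.

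The last step is to match $c_q$ with the global $c$. The inclusion of supports $\supp(w_q)\subset\supp(w)$ yields $c_q\geq c$ for $\fq$-a.e.\ relevant needle. To upgrade to equality, I would evaluate the identity $w(t)=\int_Q w_q(t)\,\fq(\diff q)$ at the right endpoint $t=1/c$: needles with $c_q>c$ contribute zero since $1/c$ lies strictly outside their support, while on $\{c_q=c\}$ the explicit formula gives $w_q(1/c)=\mu_q(M_q)\cdot w(1/c)$ with $w(1/c)>0$. Combining with $\int_Q\mu_q(M_q)\,\fq(\diff q)=\mu(M)=1$, we conclude $\int_{\{c_q>c\}}\mu_q(M_q)\,\fq(\diff q)=0$, hence $c_q=c$ for $\fq$-a.e.\ needle with positive $\mu_q$-mass, which is the desired conclusion. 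The main obstacle I expect is precisely this matching step: in Theorem~\ref{th:rigid} one compares densities at \emph{both} endpoints of the bounded interval $[-\beta/c,1/c]$, but here the needle intervals are unbounded to the left, so the argument must rely on the single right endpoint and on the continuity and strict positivity of the extremal profile there. This is also the reason why the hypothesis is strengthened to $\beta<-2$, consistently with Remark~\ref{rm:L2}: without that restriction the candidate equality profile would fail to lie in $\mathcal{P}^2$, making the whole localization machinery inapplicable.
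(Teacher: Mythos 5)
Your proposal is correct and follows essentially the same route as the paper, which itself disposes of this theorem by saying to \emph{follow the lines of the proof of Theorem~\ref{th:rigid}} using Lemma~\ref{lm:=_inf}, remarking only that $\beta<-2$ is needed so that $w_q\,\diff x$ has finite second moment (Remark~\ref{rm:L2}). One small inaccuracy in your account of the matching step: in Theorem~\ref{th:rigid} the comparison is made at $\bb_\gamma(x_0)=0$ (the barycenter image, an interior point) and at $1/c$, not at both endpoints of $[-\beta/c,1/c]$, so the cleaner comparison at $t=0$ remains available in the unbounded case as well and avoids the mild delicacy of evaluating the a.e.\ identity $w(t)=\int_Q w_q(t)\,\fq(\diff q)$ at the jump discontinuity $t=1/c$.
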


\begin{proof}
In both cases, by using Lemma~\ref{lm:=_inf}, we can follow the lines of the proof of Theorem~\ref{th:rigid} to show the first assertion on $w_q(x)$, and integrating it in $q \in Q$ implies the second assertion.
We remark that, in \eqref{rigid_neg}, $\beta<-2$ is assumed to ensure that $w_q(x) \diff x$ has finite second moment; recall Remark~\ref{rm:L2}.
\end{proof}

\section{Stability estimates}\label{sc:stab}

This section is devoted to the stability problem for our Gr\"unbaum's inequality.
As a generalization of the rigidity, the stability is concerned with what happens when equality nearly holds.
Similarly to the previous sections, we first analyze the one-dimensional case, and use it to study the general case via the localization.
The localization has played a vital role in some stability results, e.g., \cite{CMM,MO1,MO2} on isoperimetric inequalities, \cite{BF22,CMS23} on the spectral gap (Poincar\'e inequality).
A stability result in the Euclidean setting can be found in \cite{Gr00}.

\subsection{Case of $N \in (1,\infty)$}\label{ssc:stab_pos}

We first consider \eqref{eq:1d_pos} with $N>1$.
Let us begin with the following observation.
For $((a,b),|\cdot|,w(x) \diff x)$ as in Lemma~\ref{lm:Grunbaum1d}, an immediate application of \cite[Theorem~6]{fradelizi}, with $\phi(x)=x^2$ and $n=N$, implies
\[
2w(0) \int_0^{1/(2w(0))} x^2 \diff x
\leq \int_a^b x^2 w(x) \diff x
\leq \frac{N^2}{2(N+1)(N+2)} \frac{1}{w(0)^2},
\]
which can be rewritten as 
\begin{equation}\label{eq:rho0bound}
     \frac{2(N+1)(N+2)}{N^2}\int_a^b x^2 w(x) \diff x
     \leq \frac{1}{w(0)^2}
     \leq 12 \int_a^b x^2 w(x) \diff x.
\end{equation}
Observe that, in the rigidity case \eqref{eq:=_pos}, we have
\[
\frac{1}{w(0)^2} =\frac{1}{c^2} \biggl( \frac{N+1}{N} \biggr)^{2N}, \qquad
\int_{-N/c}^{1/c} x^2 w(x) \diff x =\frac{1}{c^2} \frac{N}{N+2}.
\]

\begin{remark}\label{rm:w0bound}
    The right inequality in \eqref{eq:rho0bound} is sharp for $N=\infty$.
    Indeed, for the uniform distribution $w \equiv 1/(2\sqrt{3})$ on $[-\sqrt{3},\sqrt{3}]$, we have $\int_{-\sqrt{3}}^{\sqrt{3}} x^2 w(x) \diff x=1$ and equality holds in the right inequality. 
\end{remark}

\begin{lemma}\label{lm:1dstab_pos}
Let $((a,b),|\cdot|,w(x) \diff x)$ be as in Lemma~$\ref{lm:Grunbaum1d}$, and
put $c:=w(0)/R(0)$.
If
\begin{equation}\label{eq:1d_stab_e}
R(0) \leq (1+\varepsilon) \biggl( \frac{N}{N+1} \biggr)^N
\end{equation}
holds for some $\varepsilon>0$, then we have
\begin{equation}\label{eq:1d_stab}
\int_\R |R-F| \diff x
\le \frac{4\sqrt{3}N^{N+1}}{(N+1)^N} (1+\varepsilon) \bigl( 1-(1+\varepsilon)^{-1/N} \bigr) \biggl( \int_a^b x^2 w(x) \diff x \biggr)^{1/2},
\end{equation}
where \[
F(x) :=\biggl( \frac{N}{N+1} \biggr)^N \biggl( 1+\frac{c}{N}x \biggr)^N
\]
on $[-N/c,1/c]$, $F(x):=0$ on $(-\infty,-N/c)$ and $F(x):=1$ on $(1/c,\infty)$.
\end{lemma}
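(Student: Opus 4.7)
The plan is to exploit the fact that both $R$ and $F$ are cumulative distribution functions of mean-zero probability measures on $\R$, which collapses the $L^1$ distance $\int|R-F|\diff x$ to twice its positive part, and then to apply the pointwise upper bound $R(x)\le R(0)(1+(c/N)x)^N$ established in the proof of Lemma~\ref{lm:Grunbaum1d}. The rigidity calculation in Lemma~\ref{lm:=_pos} shows that $F'\diff x$ is itself a mean-zero probability measure on $[-N/c,1/c]$, so integration by parts yields $\int_{\R}(R-F)\diff x = 0$ and hence
\[
\int_{\R}|R-F|\diff x = 2\int_{\R}(R-F)_+\diff x.
\]
It thus suffices to produce a one-sided bound on $R-F$.

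To control $(R-F)_+$ pointwise, I would set $\alpha := R(0)/F(0) \in [1,1+\varepsilon]$, so that \eqref{eq:R} reads $R(x) \le \alpha F(x)$ on $[-N/c,1/c]$. Combined with the trivial $R \le 1$, this gives $R \le \min(\alpha F,1)$ on that interval. Outside $[-N/c,1/c]$ the quantity $(R-F)_+$ vanishes: to the left, $a \ge -N/c$ forces $R \equiv 0 \equiv F$; to the right, $F \equiv 1 \ge R$. Splitting at the unique crossover $x_0 := (N+1)/(c\alpha^{1/N}) - N/c$ where $\alpha F(x_0) = 1$, I obtain
\[
(R-F)_+ \le (\alpha-1) F \text{ on } [-N/c,x_0], \qquad (R-F)_+ \le 1-F \text{ on } [x_0,1/c].
\]

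A direct computation using the identity $\int_{-N/c}^{1/c} F \diff x = 1/c$ (another consequence of the mean-zero property of $F'\diff x$) together with the explicit primitive $\int (1+(c/N)x)^N \diff x = (N/(c(N+1)))(1+(c/N)x)^{N+1}$ should produce, after cancellation,
\[
\int_{\R}(R-F)_+ \diff x \le \frac{N}{c}\bigl(1 - \alpha^{-1/N}\bigr) \le \frac{N}{c}\bigl(1 - (1+\varepsilon)^{-1/N}\bigr).
\]
I would then close the argument by writing $1/c = R(0)/w(0)\le (1+\varepsilon)(N/(N+1))^N/w(0)$ and invoking $1/w(0) \le 2\sqrt{3}\bigl(\int_a^b x^2 w \diff x\bigr)^{1/2}$ from \eqref{eq:rho0bound}. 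Multiplying through by $2$ and using the identity $N\cdot (N/(N+1))^N = N^{N+1}/(N+1)^N$ recovers \eqref{eq:1d_stab} with the claimed constant.

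The main subtlety lies in choosing the right piecewise decomposition at $x_0$: without taking the minimum $\min(\alpha F,1)$, the crude bound $(R-F)_+ \le (\alpha-1)F$ throughout $[-N/c,1/c]$ would yield a constant of order $\varepsilon/c$ rather than the sharper $N(1-(1+\varepsilon)^{-1/N})/c$, losing the precise $N$-dependence. The mean-zero identity is the other essential ingredient, since a matching lower bound on $R$ in terms of $F$ is not directly available from the concavity of $R^{1/N}$.
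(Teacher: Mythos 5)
Your argument is correct and recovers \eqref{eq:1d_stab} with the claimed constant; the cancellation you anticipate indeed yields $\int_\R(R-F)_+\diff x\leq (N/c)\bigl(1-\alpha^{-1/N}\bigr)$, and the remaining steps follow as you describe. It is, however, a genuinely different presentation from the paper's. The paper introduces an intermediate cumulative distribution function $U(x):=R(0)(1+(c/N)x)^N$ on $[-N/c,\bar b]$, computes $\int_\R|R-U|\diff x=\int_\R|U-F|\diff x=-\xi$ exactly in terms of the barycenter $\xi$ of $U'\diff x$, and concludes by the triangle inequality; you instead exploit the identity $\int_\R(R-F)\diff x=0$ (both laws are centred at $0$) to reduce to $2\int_\R(R-F)_+\diff x$ and bound the integrand pointwise by the envelope $\min(\alpha F,1)-F$. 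The two routes are dual: your envelope is precisely the paper's $U$ restricted to $[-N/c,1/c]$ (your crossover $x_0$ is the paper's $\bar b$), both reach the same intermediate bound $\frac{2N}{c}\bigl(1-(1+\varepsilon)^{-1/N}\bigr)$, and one can check both estimates carry the same slack $2\int_\R\bigl(U-\max(R,F)\bigr)\diff x$. What your version buys is that it isolates the only one-sided information about $R$ actually used, namely the pointwise bound \eqref{eq:R} (which, extended by continuity as $R\equiv 1$ past $b$, holds on all of $[-N/c,1/c]$, so no separate verification that $x_0\leq b$ is needed), and it avoids computing the barycenter of the auxiliary density; the paper's version reuses the barycenter bookkeeping already set up in the proof of Lemma~\ref{lm:Grunbaum1d} and packages the two $L^1$-distances symmetrically.
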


Note that $F$ is the cumulative distribution function for the rigidity case \eqref{eq:=_pos}.

\begin{proof}
Consider a probability density
\[ u(x)=cR(0)\biggl( 1+\frac{c}{N}x\biggr)^{N-1} \]
on $(-N/c,\bar{b})$, whose cumulative distribution function is given by
\[ U(x) = R(0)\biggl( 1+\frac{c}{N}x \biggr)^{N}. \]
Note that $R(x) \le U(x)$ by \eqref{eq:R}, which implies $\bar{b} \le b$.
Moreover, we find from $U(\bar{b})=1$ that
\[ \bar{b}=\frac{N}{c} \bigl(R(0)^{-1/N}-1 \bigr), \]
and then \eqref{eq:1d_pos} yields $\bar{b}\leq 1/c$.
One can also compute that the barycenter $\xi \in (-N/c,\bar{b})$ of $u(x) \diff x$ is given by
\[
\xi = R(0) \biggl( 1+\frac{c}{N}\bar{b} \biggr)^N
\biggl( \bar{b}-\frac{1}{c}\frac{N}{N+1} \biggl(1+\frac{c}{N}\bar{b}\biggr) \biggr)
 = \frac{N}{N+1} \biggl( \bar{b}-\frac{1}{c} \biggr)
 \le 0.
\]
Similarly to $\int_a^b R(x) \diff x=b$ in \eqref{eq:intR}, we can calculate
\[
\xi =\int_{-N/c}^{\bar{b}} xU'(x) \diff x
=\bar{b} -\int_{-N/c}^{\bar{b}} U(x) \diff x.
\]
Then, recalling $a \ge -N/c$ from the proof of Lemma~\ref{lm:Grunbaum1d} and $\bar{b} \le b$,
we obtain
\begin{align*}
    \int_\R |R-U| \diff x
    &= \int_{-N/c}^a U \diff x + \int_a^{\bar{b}} (U-R) \diff x + \int_{\bar{b}}^b (1-R) \diff x\\
    &= \int_{-N/c}^{\bar{b}} U \diff x -\int_a^b R \diff x +b-\bar{b}
     = -\xi.
\end{align*}

Next, observe from \eqref{eq:1d_pos} that $U(x) \ge F(x)$, and $\int_{-N/c}^{1/c} F(x) \diff x =1/c$ holds similarly to \eqref{eq:intR}.
Combining these, we obtain
\begin{align*}
   \int_\R |U-F| \diff x
   &= \int_{-N/c}^{\bar{b}} (U-F) \diff x + \int_{\bar{b}}^{1/c} (1-F) \diff x\\
   &= \int_{-N/c}^{\bar{b}} U \diff x - \int_{-N/c}^{1/c} F \diff x + \frac{1}{c}-\bar{b}
   = -\xi.
\end{align*}

Now, if $w$ almost attains the Gr\"unbaum bound in the sense of the hypothesis \eqref{eq:1d_stab_e}, then we have
\[
\bar{b} \geq \frac{1}{c} \bigl((N+1)(1+\varepsilon)^{-1/N}-N\bigr),
\]
and hence
\[
\xi \geq \frac{N}{c} \bigl( (1+\varepsilon)^{-1/N}-1 \bigr).
\]
Thus, we deduce from the right inequality in \eqref{eq:rho0bound} and the hypothesis \eqref{eq:1d_stab_e}
that
\begin{align*}
\int_\R |R-F| \diff x
&\le -2\xi
 \le \frac{2NR(0)}{w(0)} \bigl(1-(1+\varepsilon)^{-1/N} \bigr) \\
&\le 4\sqrt{3} (1+\varepsilon) \frac{N^{N+1}}{(N+1)^N} \bigl(1-(1+\varepsilon)^{-1/N} \bigr)
\biggl( \int_a^b x^2 w(x) \diff x \biggr)^{1/2}.
\end{align*}
This completes the proof.
\end{proof}

In view of Subsection~\ref{ssc:1st}, we obtain the following corollary.

\begin{corollary}\label{cr:stab}
In the situation of Theorem~$\ref{thm:main}$\eqref{main_pos} with $\bb_{\gamma}(x_0)=0$, suppose that
\[
\mu \bigl( \{ x \in X \mid \bb_{\gamma}(x) \le 0 \} \bigr)
\le (1+\ve)\biggl( \frac{\beta}{\beta +1} \biggr)^\beta
\]
holds for some $\ve>0$.
Then $w(x) \diff x=\bb_\gamma \# \mu$ satisfies \eqref{eq:1d_stab} with $\beta$ in place of $N$.
\end{corollary}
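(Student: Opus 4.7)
The plan is to reduce the statement to its one-dimensional incarnation and then invoke Lemma~\ref{lm:1dstab_pos} directly. First, by Proposition~\ref{pr:proj}\eqref{key_pos} applied with parameter $\beta$, the push-forward $\bb_\gamma\#\mu = w\,\diff x$ has a density $w$ on $\R$ such that $w^{1/(\beta-1)}$ is concave on $\{w>0\}$. By Remark~\ref{rm:main}, $\supp(\mu)$ is bounded, and since $\bb_\gamma$ is $1$-Lipschitz, so is the support of $w$; denote it $[a,b]$.

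Next, recall that $x_0$ is a barycenter of $\mu$ and that we have normalized $\gamma$ so that $\bb_\gamma(x_0)=0$. Lemma~\ref{lm:0mean} then yields $\int_X \bb_\gamma\,\diff\mu = 0$, which via the push-forward relation reads $\int_a^b x\,w(x)\,\diff x = 0$. Since $\mu$ is a probability measure we also have $\int_a^b w(x)\,\diff x = 1$, and combined with the non-trivial support this forces $a<0<b$. Therefore $w$ satisfies all the hypotheses of Lemma~\ref{lm:Grunbaum1d}, and hence of Lemma~\ref{lm:1dstab_pos}, with $\beta$ playing the role of $N$.

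Finally, the push-forward identity turns the assumption
\[
\mu\bigl(\{x\in X\mid \bb_\gamma(x)\le 0\}\bigr)\le (1+\varepsilon)\biggl(\frac{\beta}{\beta+1}\biggr)^\beta
\]
into $R(0)\le(1+\varepsilon)(\beta/(\beta+1))^\beta$, where $R(x):=\int_a^x w(s)\,\diff s$. This is exactly hypothesis \eqref{eq:1d_stab_e} of Lemma~\ref{lm:1dstab_pos} with $\beta$ in place of $N$, so its conclusion delivers precisely \eqref{eq:1d_stab} with $\beta$ in place of $N$. There is essentially no obstacle beyond bookkeeping: all the analytic substance has already been packaged into Proposition~\ref{pr:proj}, Lemma~\ref{lm:0mean}, and Lemma~\ref{lm:1dstab_pos}, and the corollary is a transparent translation via the splitting.
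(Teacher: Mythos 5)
Your proof is correct and follows exactly the route the paper intends: the paper justifies the corollary with the single line ``In view of Subsection~\ref{ssc:1st}, we obtain the following corollary,'' which is precisely the reduction you carry out via Proposition~\ref{pr:proj}\eqref{key_pos}, Lemma~\ref{lm:0mean}, and Lemma~\ref{lm:1dstab_pos} with $\beta$ in place of $N$. You have simply unpacked the bookkeeping that the paper leaves implicit.
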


We next consider another immediate consequence of Lemma~\ref{lm:1dstab_pos} via the localization.
We use the same notations as Subsection~\ref{ssc:2nd}:
$(Q,\fq)$, $(X_q,\fm_q)$, and $\mu_q=\rho\fm_q=w_q \diff x$ induced from the localization built from $\bb_\gamma \rho$.

\begin{proposition}\label{pr:stab}
In the situation of Theorem~$\ref{thm:main}$\eqref{main_pos} with $\bb_{\gamma}(x_0)=0$, suppose that
\[
\mu \bigl( \{ x \in X \mid \bb_{\gamma}(x) \le 0 \} \bigr)
\le (1+\ve)\biggl( \frac{\beta}{\beta +1} \biggr)^\beta
\]
holds for some $\ve>0$.
Then, for each $\delta>0$, there exists $Q' \subset Q$ such that
\begin{equation}\label{eq:Q'}
\mu\Biggl( \bigcup_{q \in Q'} X_q \Biggr) \ge \frac{\delta}{1+\delta}
\end{equation}
holds and every $q \in Q'$ satisfies
\begin{align}\label{eq:stab}
\begin{split}
&\int_{\R} |R_q-F_q| \diff x \\
&\le \frac{4\sqrt{3}\beta^{\beta+1}}{(\beta+1)^\beta} (1+\ve') \bigl( 1-(1+\ve')^{-1/\beta} \bigr)
\biggl( \frac{1}{\mu_q(X_q)} \int_{X_q} \bb_\gamma^2 \diff\mu_q \biggr)^{1/2},
\end{split}
\end{align}
where we set
\begin{equation}\label{eq:ec}
\ve':=\ve +\delta +\ve\delta, \qquad
c_q :=\frac{w_q(0)}{\int_{-\infty}^0 w_q(x) \diff x},
\end{equation}
and $R_q$ and $F_q$ are the cumulative distribution functions for $\mu_q(X_q)^{-1} \cdot \mu_q$ and
\[
c_q \biggl( \frac{\beta}{\beta +1} \biggr)^\beta \biggl( 1+\frac{c_q}{\beta}x \biggr)^{\beta-1} \quad \text{on}\ \biggl[ -\frac{\beta}{c_q},\frac{1}{c_q} \biggr],
\]
respectively.
\end{proposition}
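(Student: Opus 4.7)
My approach is to combine the localization machinery from Subsection~\ref{ssc:2nd} with the one-dimensional stability estimate of Lemma~\ref{lm:1dstab_pos}. The conditional function $\bb_\gamma \rho$ is admissible ($\int_X \bb_\gamma \diff\mu = 0$ by Lemma~\ref{lm:0mean} with $\bb_\gamma(x_0)=0$, and $\mu \in \mathcal{P}^2(X)$ by Remark~\ref{rm:main}), so the localization yields needles $(X_q,\fm_q)_{q \in Q}$ together with a measure $\fq$ on $Q$ for which $\mu_q(X_q)\,\fq(\diff q)$ is a probability measure on $Q$. Arguing exactly as in the proof of Theorem~\ref{th:rigid}, for $\fq$-a.e.\ $q$ the normalized density $\tilde{w}_q := w_q/\mu_q(X_q)$ on the bounded interval $\bb_\gamma(X_q \cap \Omega)$ is a probability density with zero barycenter and $\tilde{w}_q^{1/(\beta-1)}$ concave. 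Applying Lemma~\ref{lm:Grunbaum1d} to each such needle (with $\beta$ playing the role of $N$) gives
\[
E_q := \frac{\mu_q(\{\bb_\gamma \le 0\})}{\mu_q(X_q)} \ge \biggl(\frac{\beta}{\beta+1}\biggr)^{\beta}.
\]

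Next, I would transfer the global hypothesis on $\mu$ to a pointwise-in-$q$ statement via Markov's inequality. Disintegration gives
\[
\int_Q \biggl( E_q - \biggl(\frac{\beta}{\beta+1}\biggr)^{\beta} \biggr) \mu_q(X_q)\,\fq(\diff q)
= \mu\bigl(\{\bb_\gamma \le 0\}\bigr) - \biggl(\frac{\beta}{\beta+1}\biggr)^{\beta}
\le \ve \biggl(\frac{\beta}{\beta+1}\biggr)^{\beta},
\]
and the integrand is nonnegative by the previous step. Applying Markov's inequality in the probability space $(Q, \mu_q(X_q)\,\fq(\diff q))$ with threshold $\ve'(\beta/(\beta+1))^\beta$, the set $Q' := \{q \in Q : E_q \le (1+\ve')(\beta/(\beta+1))^\beta\}$ satisfies
\[
\int_{Q \setminus Q'} \mu_q(X_q)\,\fq(\diff q) \le \frac{\ve}{\ve'} \le \frac{1}{1+\delta},
\]
where the last inequality uses the calibration $\ve' = \ve + \delta + \ve\delta$ (so that $\ve'/\ve = (1+\delta) + \delta/\ve \ge 1+\delta$). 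This is precisely \eqref{eq:Q'}.

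For each $q \in Q'$, the density $\tilde{w}_q$ now verifies hypothesis \eqref{eq:1d_stab_e} of Lemma~\ref{lm:1dstab_pos} with parameter $\ve'$ and with $\beta$ in place of $N$, so that lemma yields \eqref{eq:stab}; the constant $c$ in Lemma~\ref{lm:1dstab_pos} becomes exactly the $c_q$ defined in \eqref{eq:ec}, and the identification of $X_q$ with $\bb_\gamma(X_q) \subset \R$ turns the second-moment factor into $\mu_q(X_q)^{-1} \int_{X_q} \bb_\gamma^2 \,\diff\mu_q$. The only truly new ingredient beyond the one-dimensional analysis is the Markov-based selection of $Q'$, and the main subtlety is the calibration $\ve' = \ve + \delta + \ve\delta$: it is dictated by the requirement $\ve/\ve' \le 1/(1+\delta)$ and quantifies the unavoidable trade-off between the $\mu$-mass of the good needle set and the sharpness of the per-needle stability estimate one can export from the 1-D bound.
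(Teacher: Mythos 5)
Your proof is correct and follows essentially the same route as the paper: apply Lemma~\ref{lm:Grunbaum1d} needle-by-needle to get the per-needle Gr\"unbaum lower bound, isolate the set $Q'$ of needles satisfying the threshold $(1+\ve')(\beta/(\beta+1))^\beta$, and invoke Lemma~\ref{lm:1dstab_pos} on each good needle. The only difference is cosmetic: you phrase the selection of $Q'$ as Markov's inequality applied to the nonnegative excess $E_q - (\beta/(\beta+1))^\beta$, whereas the paper runs the equivalent pigeonhole argument by contradiction; your version even delivers the marginally sharper mass bound $1-\ve/\ve'$, which of course dominates $\delta/(1+\delta)$.
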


Recall that letting $\bb_{\gamma}(x_0)=0$ loses no generality, and that $X_q$ is identified with $\bb_\gamma(X_q) \subset \R$ via $\bb_\gamma$.

\begin{proof}
In the second proof of Theorem~\ref{thm:main} in Subsection~\ref{ssc:2nd}, we obtained \eqref{eq:main} by integrating \eqref{eq:mu_q<}.
Then the set $Q' \subset Q$, consisting of $q$ with
\[
\mu_q \bigl( \{ x \in X_q \mid \bb_{\gamma}(x) \le 0 \} \bigr)
\le (1+\delta)(1+\ve)\biggl( \frac{\beta}{\beta +1} \biggr)^\beta \mu_q(X_q),
\]
satisfies \eqref{eq:Q'}.
Indeed, if not, then integrating
\[
\mu_q \bigl( \{ x \in X_q \mid \bb_{\gamma}(x) \le 0 \} \bigr)
> (1+\delta)(1+\ve)\biggl( \frac{\beta}{\beta +1} \biggr)^\beta \mu_q(X_q)
\]
for $q \in Q \setminus Q'$ yields
\begin{align*}
\mu \bigl( \{ x \in X \mid \bb_{\gamma}(x) \le 0 \} \bigr)
&> (1+\delta)(1+\ve)\biggl( \frac{\beta}{\beta +1} \biggr)^\beta \int_{Q \setminus Q'} \mu_q(X_q) \,\fq(\diff q) \\
&> (1+\delta)(1+\ve)\biggl( \frac{\beta}{\beta +1} \biggr)^\beta \biggl( 1-\frac{\delta}{1+\delta} \biggr) \\
&= (1+\ve)\biggl( \frac{\beta}{\beta +1} \biggr)^\beta,
\end{align*}
which contradicts the hypothesis.
Then \eqref{eq:stab} follows from Lemma~\ref{lm:1dstab_pos} with $N=\beta$ and $w=w_q/\mu_q(X_q)$.
\end{proof}

\subsection{Case of $N \in (-\infty,-1) \cup \{\infty\}$}\label{ssc:stab_neg}

We saw in Subsection~\ref{ssc:stab_pos} that a key ingredient to derive a stability estimate is the right inequality in \eqref{eq:rho0bound}, which gives a lower bound of $w(0)$ in term of the second moment for centered distributions.
For $N=\infty$, log-concave probability densities have finite moment of any order (by the fact that they have sub-exponential tails; see, e.g., \cite[Section 2.2]{Ledoux}) and \eqref{eq:rho0bound} is still available.
For $N<-1$, however, one-dimensional $\CD(0,N)$-probability densities may not have finite second moment (recall Remark~\ref{rm:L2}).
Moreover, even if the second moment is finite, \eqref{eq:rho0bound} for negative $N$ seems not known.
Thus, in the following counterpart of Lemma~\ref{lm:1dstab_pos}, we can use \eqref{eq:rho0bound} only when $N=\infty$.

\begin{lemma}\label{lm:1dstab_neg}
Let $((a,b),|\cdot|,w(x) \diff x)$ be as in Lemma~$\ref{lm:1d_neg}$, and put $c:=w(0)/R(0)$.
\begin{enumerate}[{\rm (i)}]
\item\label{stab_neg}
Suppose that $w^{1/(N-1)}$ is convex for some $N<-1$.
If
\begin{equation}\label{eq:1d_stab_neg}
R(0) \leq (1+\varepsilon) \biggl( \frac{N}{N+1}\biggr)^N
\end{equation}
holds for some $\varepsilon>0$, then we have
\begin{equation}\label{eq:1d_stab_ne}
\int_\R |R-F| \diff x
\le \frac{2N}{w(0)} \biggl( \frac{N}{N+1} \biggr)^N (1+\varepsilon) \bigl( 1-(1+\varepsilon)^{-1/N} \bigr) ,
\end{equation}
where
\[
F(x) :=\biggl( \frac{N}{N+1}\biggr)^N\biggl(1+\frac{c}{N}x\biggr)^{N}
\]
on $(-\infty,1/c]$ and $F(x):=1$ on $(1/c,\infty)$, which is the cumulative distribution function for the rigidity case of Lemma~$\ref{lm:=_inf}$\eqref{1d=_neg}.

\item\label{stab_inf}
Suppose that $\log w$ is concave.
If
\begin{equation*}
R(0) \leq (1+\ve) \e^{-1}
\end{equation*}
holds for some $\ve>0$, then we have
\begin{equation}\label{eq:1d_stab_infin}
\int_\R |R-F| \diff x
\le \frac{4\sqrt{3}}{\e} (1+\ve) \log (1+\ve) \biggl( \int_a^b x^2 w(x) \diff x \biggr)^{1/2},
\end{equation}
where $F(x) :=\e^{cx-1}$ on $(-\infty,1/c]$ and $F(x):=1$ on $(1/c,\infty)$, which is the cumulative distribution function for the rigidity case of Lemma~$\ref{lm:=_inf}$\eqref{1d=_inf}.
\end{enumerate}
\end{lemma}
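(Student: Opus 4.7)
The plan is to mimic the argument of Lemma~\ref{lm:1dstab_pos}, comparing $R$ to an auxiliary CDF $U$ obtained by saturating the one-sided inequality from the proof of Lemma~\ref{lm:1d_neg}, and then comparing $U$ to the rigidity CDF $F$ via the triangle inequality. In case~\eqref{stab_neg} I would set
\[
U(x) := R(0) \biggl( 1 + \frac{c}{N} x \biggr)^N \quad \text{on } (-\infty,\bar b], \qquad \bar b := \frac{N}{c} \bigl( R(0)^{-1/N} - 1 \bigr),
\]
extended by $1$ on $(\bar b,\infty)$ so that $U$ becomes a CDF on $\R$; in case~\eqref{stab_inf} I would take $U(x) := R(0) \e^{cx}$ on $(-\infty,\bar b]$ with $\bar b := -\log R(0)/c$. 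The proof of Lemma~\ref{lm:1d_neg} yields that $R^{1/N}$ is convex (resp.\ $\log R$ is concave), so evaluation at $x=0$ gives $R \le U$ on the interior and trivially beyond it. Moreover, Gr\"unbaum's bound $R(0) \ge (N/(N+1))^N$ (resp.\ $R(0) \ge \e^{-1}$) simultaneously yields $\bar b \le 1/c$ and the pointwise comparison $F \le U$, since on the common interior $(-\infty,\bar b]$ the two formulas for $F$ and $U$ differ only by a multiplicative constant $\le 1$.

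The density $u := U' \mathbf{1}_{(-\infty,\bar b)}$ has finite first moment (its tail is polynomial of order $N-1 < -2$ in~\eqref{stab_neg} and exponential in~\eqref{stab_inf}), and integration by parts gives its barycenter as $\xi := \bar b - \int_{-\infty}^{\bar b} U \diff x$; Gr\"unbaum's inequality yields $\xi \le 0$ in both cases. The elementary identity $\int_\R (G_\mu - G_\nu) \diff x = \mathrm{mean}(\nu) - \mathrm{mean}(\mu)$ for CDFs of probability measures on $\R$ with finite first moments, applied to the pairs $(R,U)$ and $(F,U)$ with respective barycenters $0,\xi,0$, combined with the pointwise comparisons $R \le U$ and $F \le U$, gives
\[
\int_\R |R-U| \diff x = -\xi = \int_\R |U-F| \diff x,
\]
and the triangle inequality produces $\int_\R |R-F| \diff x \le -2\xi$.

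The proof is then finished by quantifying $-\xi$ in terms of $\varepsilon$. In case~\eqref{stab_neg}, explicit computation gives $\xi = (N/(c(N+1)))[N R(0)^{-1/N} - (N+1)]$; combined with \eqref{eq:1d_stab_neg} this yields $-\xi \le (N/c)(1 - (1+\varepsilon)^{-1/N})$, after which substituting $1/c = R(0)/w(0)$ and applying \eqref{eq:1d_stab_neg} once more produces the right-hand side of \eqref{eq:1d_stab_ne}. In case~\eqref{stab_inf}, the simpler identity $\xi = \bar b - 1/c$ together with $\bar b \ge (1-\log(1+\varepsilon))/c$ yields $-\xi \le \log(1+\varepsilon)/c$; the right inequality in \eqref{eq:rho0bound} (valid here since log-concave densities on $\R$ have finite moments of every order) then converts $1/w(0)$ into $2\sqrt 3 (\int_a^b x^2 w \diff x)^{1/2}$ and produces \eqref{eq:1d_stab_infin}.

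The main obstacle should be the careful handling of the unbounded support at $-\infty$ that is absent in the positive-$N$ regime of Lemma~\ref{lm:1dstab_pos}: one must justify vanishing of the boundary term $xU(x)$ as $x \to -\infty$ in the integration by parts defining $\xi$ (in~\eqref{stab_neg} this is where $N+1 < 0$ enters; in~\eqref{stab_inf} one uses the exponential decay), and one must scrupulously track the three competing negative signs of $N$, $N+1$, and $1 - (1+\varepsilon)^{-1/N}$ when passing from the bound on $R(0)$ to the bound on $-\xi$ in case~\eqref{stab_neg}.
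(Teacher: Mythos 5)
Your proof is correct, and for part~(i) it follows essentially the same path as the paper: build the auxiliary CDF $U(x)=R(0)(1+\tfrac{c}{N}x)^N$ from the convexity of $R^{1/N}$, verify $R\le U$, $F\le U$, $\bar b\le 1/c$, compute the barycenter $\xi$ of $U'$, and pass through the chain $\int|R-F|\le\int|R-U|+\int|U-F|=-2\xi$. Two small deviations are worth noting. First, you organize the $L^1$-distances via the identity $\int(G_\mu-G_\nu)=\mathrm{mean}(\nu)-\mathrm{mean}(\mu)$, whereas the paper does the bookkeeping piecewise over $(-\infty,a]$, $[a,\bar b]$, $[\bar b,b]$; your version is a bit cleaner and makes the finite-first-moment hypothesis (which you correctly flag, with the tail exponent $N-1<-2$) more visible. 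Second, for part~(ii) the paper does not argue directly: it obtains \eqref{eq:1d_stab_infin} by letting $N\to-\infty$ in the estimate of part~(i) (using that $\log w$ concave implies $w^{1/(N-1)}$ convex for all $N<-1$), and only then applies \eqref{eq:rho0bound}. Your direct proof with $U(x)=R(0)\e^{cx}$ and $\bar b=-\log R(0)/c$ reaches the same intermediate bound $-\xi\le\log(1+\ve)/c$, so both routes are valid; the direct route has the cost of re-establishing the concavity of $\log R$, which the paper has already done in the course of proving Lemma~\ref{lm:=_inf}\eqref{1d=_inf} rather than in the proof of Lemma~\ref{lm:1d_neg}\eqref{1d_inf} itself, so your pointer to ``the proof of Lemma~\ref{lm:1d_neg}'' is slightly imprecise but the fact you invoke is genuinely available in the paper.
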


\begin{proof}
\eqref{stab_neg}
The proof goes as in Lemma~\ref{lm:1dstab_pos}.
Consider a probability density
\[ u(x)=cR(0)\biggl(1+\frac{c}{N}x\biggr)^{N-1} \]
on $(-\infty,\bar{b})$, whose cumulative distribution function is given by
\[ U(x) = R(0)\biggl(1+\frac{c}{N}x\biggr)^{N}. \]
We infer from \eqref{eq:H_neg} that $R(x) \le U(x)$, and hence $\bar{b} \le b$.
We also find
\[
\bar{b}=\frac{N}{c} \bigl( R(0)^{-1/N}-1 \bigr)
\]
from $U(\bar{b})=1$,
and \eqref{eq:1d_neg} ensures $\bar{b}\leq 1/c$.
The barycenter $\xi \in (-\infty,\bar{b})$ of $u(x) \diff x$ is given in the same way as Lemma~\ref{lm:1dstab_pos} by
\[
\xi = \frac{N}{N+1}\biggl(\bar{b}-\frac{1}{c} \biggr)\le 0, \qquad
\xi =\int_{-\infty}^{\bar{b}} xU'(x) \diff x
=\bar{b} -\int_{-\infty}^{\bar{b}} U(x) \diff x.
\]
Combining the latter with $\int_a^b R(x) \diff x=b$, we deduce that
\[
    \int_\R |R-U|\diff x
    = \int_{-\infty}^a U \diff x + \int_a^{\bar{b}} (U-R) \diff x + \int_{\bar{b}}^b (1-R) \diff x
    = -\xi.
\]
Next, we observe from \eqref{eq:1d_neg} that $U(x) \ge F(x)$, and $\int_{-\infty}^{1/c} F(x) \diff x =1/c$ similarly to \eqref{eq:intR}.
Hence, we find
\[
   \int_\R |U-F| \diff x
   = \int_{-\infty}^{\bar{b}} (U-F) \diff x + \int_{\bar{b}}^{1/c} (1-F) \diff x
   = -\xi.
\]

If $w$ satisfies \eqref{eq:1d_stab_neg}, then we have
\[
\bar{b} \geq \frac{1}{c} \bigl( (N+1)(1+\ve)^{-1/N} -N \bigr), \qquad
\xi \geq \frac{N}{c} \bigl( (1+\ve)^{-1/N} -1 \bigr).
\]
Therefore, we conclude that
\[
\int_\R |R-F| \diff x
\le -2\xi 
\le \frac{2N}{w(0)} \biggl( \frac{N}{N+1} \biggr)^N (1+\ve) \bigl( 1-(1+\ve)^{-1/N} \bigr). \]

\eqref{stab_inf}
By the monotonicity on the dimensional parameter $N$ (cf.\ \eqref{eq:mono}), $w^{1/(N-1)}$ is convex for all $N<-1$.
Therefore, by letting $N \to -\infty$ in \eqref{eq:1d_stab_ne}, we obtain
\[
\int_\R |R-F| \diff x
\le \frac{2(1+\ve)}{w(0)\e} \log (1+\ve),
\]
from which \eqref{eq:1d_stab_infin} immediately follows with the help of \eqref{eq:rho0bound}.
\end{proof}

Besides a corollary analogous to Corollary~\ref{cr:stab}, one can show the following in the same way as Proposition~\ref{pr:stab}.

\begin{proposition}\label{pr:stab_neg}
\begin{enumerate}[{\rm (i)}]
\item
In the situation of Theorem~$\ref{th:neg}$\eqref{main2_neg} with $\bb_{\gamma}(x_0)=0$, suppose that
\[
\mu \bigl( \{ x \in M \mid \bb_{\gamma}(x) \le 0 \} \bigr)
\le (1+\ve)\biggl( \frac{\beta}{\beta +1} \biggr)^\beta
\]
holds for some $\ve>0$.
Then, for each $\delta>0$, there exists $Q' \subset Q$ such that \eqref{eq:Q'} holds $($with $M_q$ in place of $X_q)$ and every $q \in Q'$ satisfies
\[
\int_{\R} |R_q-F_q| \diff x
\le \frac{2\beta^{\beta+1}}{(\beta+1)^\beta} (1+\ve') \bigl( 1-(1+\ve')^{-1/\beta} \bigr)
\frac{\mu_q(M_q)}{w_q(0)},
\]
where we set $\ve'$ and $c_q$ as in \eqref{eq:ec},
and $R_q$ and $F_q$ are the cumulative distribution functions for $\mu_q(M_q)^{-1} \cdot \mu_q$ and
\[
c_q \biggl( \frac{\beta}{\beta +1} \biggr)^\beta \biggl( 1+\frac{c_q}{\beta}x \biggr)^{\beta-1} \quad \text{on}\ \biggl( -\infty,\frac{1}{c_q} \biggr],
\]
respectively.

\item
In the situation of Theorem~$\ref{th:neg}$\eqref{main2_inf} with $\bb_{\gamma}(x_0)=0$, suppose that
\[
\mu \bigl( \{ x \in M \mid \bb_{\gamma}(x) \le 0 \} \bigr) \le (1+\ve)\e^{-1}
\]
holds for some $\ve>0$.
Then, for each $\delta>0$, there exists $Q' \subset Q$ such that \eqref{eq:Q'} holds $($with $M_q$ in place of $X_q)$ and every $q \in Q'$ satisfies
\[
\int_{\R} |R_q-F_q| \diff x
\le \frac{4\sqrt{3}}{\e} (1+\ve') \log(1+\ve') \biggl( \frac{1}{\mu_q(M_q)} \int_{M_q} \bb_\gamma^2 \diff\mu_q \biggr)^{1/2},
\]
where we set $\ve'$ and $c_q$ as in \eqref{eq:ec},
and $R_q$ and $F_q$ are the cumulative distribution functions for $\mu_q(M_q)^{-1} \cdot \mu_q$ and
$c_q \e^{c_q x-1}$ on $(-\infty,1/c_q]$, respectively.
\end{enumerate}
\end{proposition}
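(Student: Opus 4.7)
The proof I would write follows exactly the template already laid down for Proposition~\ref{pr:stab}, combined with the one-dimensional stability estimates of Lemma~\ref{lm:1dstab_neg} in place of Lemma~\ref{lm:1dstab_pos}. Concretely, I would apply Klartag's localization \cite{Kl} (as recalled in Subsection~\ref{ssc:proof2_neg}) to the conditional function $f := \bb_\gamma \rho$; this uses the assumption $\mu \in \mathcal{P}^2(M)$ hidden in the hypotheses of Theorem~\ref{th:rigid_neg} / Proposition~\ref{pr:stab_neg}. Just as in Section~\ref{sc:proof}, on $\fq$-almost every needle $M_q$ the conditional measure $\mu_q = w_q\,\diff x$ has barycenter at $0$ (via Lemma~\ref{lm:0mean} applied needle-wise), and the convexity of $w_q^{1/(\beta-1)}$ (case (i)) or concavity of $\log w_q$ (case (ii)) is inherited in the same way as in Lemma~\ref{lm:beta} / Proposition~\ref{prop:proj_neg}, using that each needle is a one-dimensional $\CD(0,N)$-space and that $\rho^{1/(\beta-N)}$ (resp.\ $\log\rho$) is concave along the needle.

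Next I would run the same pigeonhole/Markov step that produced \eqref{eq:Q'} in Proposition~\ref{pr:stab}. Define
\[
Q' := \Biggl\{\, q \in Q \,\Bigm|\, \frac{1}{\mu_q(M_q)}\, \mu_q\bigl(\{x \in M_q \mid \bb_\gamma(x) \le 0\}\bigr) \le (1+\delta)(1+\ve) \cdot L \,\Biggr\},
\]
where $L = (\beta/(\beta+1))^\beta$ in case (i) and $L = \e^{-1}$ in case (ii). If \eqref{eq:Q'} failed, then integrating the strict reverse inequality over $Q \setminus Q'$ against $\fq$ would yield $\mu(\{\bb_\gamma \le 0\}) > (1+\ve) L$, contradicting the global near-equality hypothesis. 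This step is formally identical to the one in Proposition~\ref{pr:stab}, so nothing new needs to be verified.

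Finally, on each $q \in Q'$, the normalized needle $\mu_q(M_q)^{-1}\mu_q$ satisfies the one-dimensional hypotheses of Lemma~\ref{lm:1dstab_neg} with parameter $\ve' = (1+\ve)(1+\delta) - 1 = \ve + \delta + \ve\delta$ and with $c_q = w_q(0)/\int_{-\infty}^0 w_q(x)\,\diff x$. Applying part \eqref{stab_neg} of Lemma~\ref{lm:1dstab_neg} gives case (i) of the proposition directly, with the factor $\mu_q(M_q)/w_q(0)$ arising because the inequality \eqref{eq:1d_stab_ne} is homogeneous in the total mass and we are dividing through by $\mu_q(M_q)$. Applying part \eqref{stab_inf} gives case (ii), where the second-moment term appears on the right-hand side, again rescaled by $\mu_q(M_q)$.

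The only mild subtlety --- and the place I would look twice before writing --- is the passage from "$\mu$ has finite second moment" (needed for the localization) to "$w_q\,\diff x$ has finite second moment" on almost every needle, which is required to invoke \eqref{eq:rho0bound} inside Lemma~\ref{lm:1dstab_neg}\eqref{stab_inf}. This follows from Fubini applied to the disintegration, since $\int_M \dist_X^2(x_0,\cdot)\,\diff\mu = \int_Q \int_{M_q} \dist_X^2(x_0,\cdot)\,\diff\mu_q\,\fq(\diff q) < \infty$ forces finite second moment $\fq$-a.e.; for case (i) no such moment enters, so this issue is absent. Aside from this bookkeeping, the proof is a one-line reference to the two ingredients already assembled.
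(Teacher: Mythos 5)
Your proof is correct and follows exactly the route the paper indicates (the paper's own proof is the single sentence ``one can show the following in the same way as Proposition~\ref{pr:stab}''). The only small imprecision is the attribution of the needle-wise zero barycenter to ``Lemma~\ref{lm:0mean} applied needle-wise'': Lemma~\ref{lm:0mean} is used once globally to justify that $f=\bb_\gamma\rho$ is a valid conditional function, and the needle-wise vanishing $\int_{M_q}\bb_\gamma\rho\,\diff\fm_q=0$ is then property~\eqref{eq:1D} of the localization itself, not a separate application of that lemma.
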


\section{Further problems}\label{sc:outro}

We close the article with some further comments and problems.

\begin{enumerate}[(A)]
\item
The results in Section~\ref{sc:neg} could be generalized to $\RCD(0,N)$-spaces with $N=\infty$ or $N \in (-\infty,-1)$.
We remark that the curvature-dimension condition for $N<0$ was defined in \cite{Oneg} (see also \cite{Oneedle} for the case of $N=0$).
However, both the splitting theorem and localization are not known even for $N=\infty$, thereby we need to generalize them or consider a different method.

\item\label{prob:B}
For $\CD(0,N)$-spaces with $N \in (1,\infty)$, though the localization is known by \cite{CM} under the essential non-branching condition, the isometric splitting does not hold in general.
Indeed, $n$-dimensional normed spaces endowed with the Lebesgue measure satisfy $\CD(0,n)$ but do not isometrically split off the real line.
Furthermore, without the essential non-branching condition, even the topological splitting may fail; we refer to \cite{M} for a counter-example.

\item
Finsler manifolds provide examples of $\CD$-spaces with possibly asymmetric distance structures (see \cite{Oint,Obook}).
In this setting, the localization is known by \cite{CM,Oneedle}.
Moreover, a weak splitting theorem can be found in \cite{Osplit};
for example, there is a one-parameter family of isometric translations (generated by $\nabla \bb_\gamma$) in the Berwald case.
Since this splitting is not isometric, we do not have an exact formula as in \eqref{eq:prod} (consider the case of normed spaces), and it is unclear if Lemma~\ref{lm:0mean} can be generalized.

\item
Our rigidity results (Theorems~\ref{th:rigid}, \ref{th:rigid_neg}) show that equality in generalized Gr\"unbaum's inequalities is attained only when the measure $\mu$ possesses a cone structure.
As we discussed in Remark~\ref{rm:rigid}, we also expect that the set $\supp(\mu)$ is also a cone, as in the Euclidean setting.
To achieve this goal, we would need a more geometric argument, possibly with the help of \cite{BK,de2016volume}.

\item
Once the rigidity as above is established, it is natural to expect a corresponding stability estimate (in a more geometric way than Section~\ref{sc:stab}), bounding the volume of the symmetric difference from a cone in a certain sense, akin to \cite{Gr00}.
\end{enumerate}

\textit{Acknowledgements.}
VEB is supported by the French Agence Nationale de la Recherche, as part of the project
ANR-22-ERCS-0015-01.
SO is supported in part by JSPS Grant-in-Aid for Scientific Research (KAKENHI) 22H04942, 24K00523, 24K21511.

\bibliographystyle{plain}
\bibliography{Biblio}

\end{document}